\numberwithin{equation}{section}
\newtheorem{prop}{Proposition}
\newtheorem{lemma}[prop]{Lemma}
\newtheorem{thm}[prop]{Theorem}
\newtheorem{conj}[prop]{Conjecture}
\numberwithin{prop}{section}
\theoremstyle{definition}
\newtheorem{defn}[prop]{Definition}
\newtheorem{rmk}[prop]{Remark}
\newcommand{\del}{\partial}
\newcommand{\delb}{\bar{\partial}}\newcommand{\dt}{\frac{\partial}{\partial t}}
\newcommand{\brs}[1]{\left| #1 \right|}
\renewcommand{\gg}{\gamma}
\newcommand{\gD}{\Delta}
\newcommand{\gd}{\delta}
\newcommand{\gk}{\kappa}
\newcommand{\gw}{\omega}
\newcommand{\ga}{\alpha}
\newcommand{\gb}{\beta}
\renewcommand{\ge}{\epsilon}
\newcommand{\N}{\nabla}
\newcommand{\FF}{\mathcal F}
\newcommand{\CC}{\mathcal C}
\newcommand{\DD}{\mathcal D}
\newcommand{\EE}{\mathcal E}
\newcommand{\LL}{\mathcal L}
\newcommand{\VV}{\mathcal V}
\renewcommand{\SS}{\mathcal S}
\newcommand{\RR}{\mathcal R}
\newcommand{\til}[1]{\widetilde{#1}}
\newcommand{\nm}[2]{\brs{\brs{ #1}}_{#2}}
\newcommand{\ohat}[1]{\overset{\circ}{#1}}
\renewcommand{\bar}[1]{\overline{#1}}
\newcommand{\HH}{\mathcal H}
\newcommand{\IP}[1]{\left<#1 \right>}
\newcommand{\floor}[1]{\lfloor #1 \rfloor}
\DeclareMathOperator{\PSH}{PSH}
\DeclareMathOperator{\tr}{tr}
\DeclareMathOperator{\Vol}{Vol}
\DeclareMathOperator{\supp}{supp}
\begin{document}

\title[Minimizing Movement solutions of Calabi flow]{Long time existence of
Minimizing Movement
solutions of Calabi flow}

\begin{abstract} We recast the Calabi flow in DeGiorgi's language
of
minimizing movements.  We establish the long time existence of minimizing
movements for K-energy with arbitrary initial condition.  Furthermore we
establish some a priori regularity results for these solutions, and that sufficiently
regular minimizing movements are smooth solutions to Calabi flow.
\end{abstract}

\author{Jeffrey Streets}
\address{Rowland Hall\\
         University of California, Irvine\\
         Irvine, CA 92617}
\email{\href{mailto:jstreets@uci.edu}{jstreets@uci.edu}}

\date{August 25, 2012}

\maketitle

\section{Introduction}

Let $(M^{2n}, \gw, J)$ be a compact K\"ahler manifold.  Fix $\phi \in
C^{\infty}(M)$ such that $\gw_{\phi} := \gw + \sqrt{-1} \del
\delb \phi > 0$, and let $s_{\phi}$ denote the scalar curvature of the metric
$\gw_{\phi}$.  Furthermore, let $V = \Vol(M)$, and set $\bar{s} = \frac{1}{V}
\int_M
s_{\phi} \gw_{\phi}^n$, which is fixed for any $\phi$.  A one-parameter family
of K\"ahler potentials $\phi_t$ is a solution of \emph{Calabi flow}
if
\begin{align} \label{Cflow}
\dt \phi =&\ s_{\phi} - \bar{s}.
\end{align}
\noindent This flow was introduced by Calabi in his seminal paper
\cite{CalabiExtremal} on extremal K\"ahler metrics.  Since then several
regularity and long time existence results have been obtained.  Long time
existence and convergence to a metric of constant scalar curvature on Riemann
surfaces was shown by Chrusciel \cite{Cru}.  A more direct proof using a
concentration/compactness argument was given by Chen \cite{ChenCFOS} (see also
\cite{Struwe}).  On complex surfaces long time existence and convergence
results have been obtained for certain metrics with small energy and toric
symmetry \cite{ChenHe}.  More recent work by Huang \cite{Huang1}, \cite{Huang2}
approaches the general problem of Calabi flow on toric varieties by exploiting
techniques used by Donaldson in understanding the constant scalar curvature
equation on such manifolds via a continuity method.

The main motivating conjecture regarding the long time behavior of Calabi flow
is simple but ambitious.
\begin{conj} \label{existconj} (Calabi, Chen) Let $(M^{2n}, \gw, J)$ be a
compact K\"ahler
manifold.  The
solution to the Calabi flow with any initial condition exists smoothly on
$[0, \infty)$.
\end{conj}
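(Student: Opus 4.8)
The plan is to treat the conjecture as a regularity problem for the gradient flow of the $K$-energy, reducing smooth long-time existence to an a priori curvature bound on every finite time interval and then attacking that bound through a local energy / $\epsilon$-regularity theory. First I would record short-time existence and a continuation criterion: the Calabi flow is a fully nonlinear fourth-order parabolic equation for $\phi$, so short-time existence from smooth data is standard, and by a criterion of Chen--He type \cite{ChenHe} the flow extends smoothly past a time $T$ provided the curvature tensor $\Rm$ of $\gw_\phi$ (equivalently $\gw_\phi$ together with all its covariant $\N$-derivatives, recovered by parabolic bootstrapping) remains bounded on $[0,T)$. Thus it suffices to rule out finite-time curvature blow-up.

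Next I would exploit the two monotonicities coming from the variational structure. Along $\dt\phi = s_\phi - \bar s$ one has $\frac{d}{dt}\mathcal M(\phi_t) = -\int_M (s_\phi-\bar s)^2\,\gw_\phi^n = -\CC(\gw_\phi)$, where $\CC(\gw_\phi)=\int_M (s_\phi-\bar s)^2\gw_\phi^n$ is the Calabi energy, and by the classical Calabi computation $\CC(\gw_\phi)$ is itself monotone non-increasing. The second fact gives the uniform global bound $\CC(\gw_{\phi_t})\le\CC(\gw_{\phi_0})$, i.e.\ a uniform $L^2$ bound on scalar curvature, and crucially it does \emph{not} require $\mathcal M$ to be bounded below --- essential here, since the conjecture asserts existence even for unstable classes where $\mathcal M$ decreases without bound. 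The difficulty, and the reason the conjecture is hard, is that an $L^2$ scalar-curvature bound is far too weak to control $\Rm$ once $n\ge 2$.

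To bridge this gap I would build a local theory organized around the quantities $\locen$ and $\locden$. The central analytic input is an $\epsilon$-regularity theorem: there is $\ge>0$ so that if the scale-invariant local energy of $\Rm$ on a parabolic ball is below $\ge$, then $\locden$ and all higher $\N$-derivatives are controlled on a smaller ball; this is the fourth-order, fully nonlinear analogue of $\epsilon$-regularity for harmonic map and Yang--Mills flows, and the local energy quantities displayed in the excerpt are tailored to exactly this. Combined with the global Calabi-energy bound, one would then run a blow-up analysis at a putative first singular time: rescale parabolically, extract a complete limiting solution, and show it is trivial via a gap/rigidity theorem asserting that a complete static limit (cscK or flat) carrying only small energy must be flat. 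The most favorable case is $n=2$, where $\int_M\brs{\Rm}^2\gw_\phi^n$ is scale invariant under the parabolic scaling of the flow, precisely as for Yang--Mills in real dimension four, so the global energy bound survives the rescaling.

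The main obstacle --- and where I expect the program to stall in full generality --- is fourfold. (i) For $n\ge 3$ the curvature $L^2$ energy is not scale invariant, so one needs a genuine monotonicity formula, an entropy of Huisken type adapted to this fourth-order flow; no such formula is presently known. (ii) The flow is not uniformly parabolic a priori, since $\gw_\phi$ may degenerate, so the $\epsilon$-regularity theorem must be proved with constants independent of the evolving, possibly collapsing, metric. (iii) One must exclude collapsing singularities in which curvature disperses in $L^2$ rather than concentrating, which requires a non-collapsing or injectivity-radius lower bound for Calabi flow that is not available. (iv) Extracting a blow-up limit presupposes local compactness --- uniform Sobolev constants and volume ratios --- precisely the estimates that may fail at a singularity. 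I therefore expect the realistic route is to first settle $n=2$ using scale invariance together with a surface-specific gap theorem, and to phrase the general case as upgrading the global \emph{weak} minimizing-movement solution produced here to a smooth one by partial regularity, showing the singular set has measure zero and then is empty, with the non-collapsing estimate of (iii) as the decisive missing ingredient.
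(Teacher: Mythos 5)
Your statement is not a theorem of this paper at all: it is stated there only as motivation (Conjecture \ref{existconj}), and it remains an open problem. The paper never attempts the smooth long-time existence you outline; its actual results are strictly weaker. It constructs \emph{weak} solutions --- $K$-energy minimizing movements in the metric completion $\bar{\HH}$ of the space of K\"ahler potentials --- by DeGiorgi's implicit-Euler time discretization and Mayer's generation theorem for convex, lower semicontinuous functionals on complete NPC spaces (Theorem \ref{fundexistthm}), plus conditional statements: extra $H_1^2\cap\PSH$ regularity when $c_1<0$ (Theorem \ref{highreg}), and smooth subsequential convergence of discrete Calabi flows to genuine Calabi flow \emph{only under hypothesized} uniform $C^{4,\ga}$ bounds (Theorem \ref{convthm}). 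So there is no proof in the paper against which your blow-up program could be matched; the paper's entire strategy is to sidestep the PDE regularity questions your program depends on, by working variationally in the Mabuchi metric geometry (Chen's $C^{1,1}$ geodesics, Calabi--Chen's NPC property, Chen--Tian's almost smooth geodesics for convexity of $\nu$).

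Your proposal, for its part, is a research program rather than a proof, and you concede this yourself. Each load-bearing step is an open problem: (a) the $\epsilon$-regularity theorem for a fourth-order, fully nonlinear flow with constants independent of a possibly degenerating metric has not been established; (b) for $n \geq 3$ the $L^2$ curvature energy is not scale invariant and no monotonicity/entropy formula of Huisken type is known for Calabi flow; (c) no non-collapsing or injectivity-radius lower bound is available, so blow-up limits need not exist; (d) the gap/rigidity classification of limits is likewise missing. Two smaller inaccuracies: the Chen--He continuation criterion is phrased in terms of a uniform \emph{Ricci} bound (a bound on $\Rm$ suffices but is not what they prove), and the uniform bound $\CC(\gw_{\phi_t}) \leq \CC(\gw_{\phi_0})$ from Calabi-energy monotonicity is, as you note, nowhere near strong enough to reach either criterion when $n \geq 2$. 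The honest conclusion is the one the paper itself draws in its final section: with current technology one either settles for minimizing-movement solutions, or one must first prove a priori estimates on the intersection of geodesic balls and sublevel sets of $\nu$ strong enough to feed a statement like Theorem \ref{convthm}; your items (a)--(d) are a reasonable inventory of what is missing on the PDE side, but an inventory of missing ingredients is not a proof.
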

\noindent Furthermore, there are conjectures on the nature of the singularity
formation at
infinity.  One example is the following.
\begin{conj} \label{convconj} (Donaldson) Let $(M^{2n}, \gw, J)$ be a compact
K\"ahler manifold
of constant scalar curvature.  The solution to the
Calabi flow with any initial condition exists for all time and converges to
a constant scalar curvature metric.
\end{conj}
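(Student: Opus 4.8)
The plan is to realize the Calabi flow \eqref{Cflow} as the downward gradient flow of the Mabuchi K-energy $\mathcal{M}$ on the space of K\"ahler potentials $\mathcal{H} = \{\phi \in C^{\infty}(M) : \gw_{\phi} > 0\}$, equipped with the Mabuchi $L^2$ metric, and then to run De Giorgi's minimizing-movement machinery to produce a global solution and control its asymptotics. Since the first variation of the K-energy is $\delta \mathcal{M}(\phi)[\psi] = - \int_M \psi (s_{\phi} - \bar{s}) \gw_{\phi}^n$, equation \eqref{Cflow} is exactly $\dt \phi = - \grad_{L^2} \mathcal{M}$. First I would record the two structural facts that make a metric gradient-flow attack feasible: that $(\mathcal{H}, d)$ is (formally) a nonpositively curved length space, and that $\mathcal{M}$ is convex along the $C^{1,1}$ geodesics of Chen.

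Next I would construct the minimizing movement. Fixing a time step $\tau$ and starting from $\phi_0$, I would inductively define $\phi_{k+1}$ as a minimizer of $\phi \mapsto \mathcal{M}(\phi) + \frac{1}{2 \tau} d(\phi, \phi_k)^2$. The hypothesis that $M$ is cscK enters here to guarantee that $\mathcal{M}$ is bounded below (indeed minimized by the constant-scalar-curvature representative), so the scheme is well posed and the discrete energies $\mathcal{M}(\phi_k)$ are monotone decreasing and bounded, yielding a global-in-$k$ iteration. Letting $\tau \to 0$ and invoking the De Giorgi energy inequality together with the a priori regularity available in the smooth setting, I would extract a limiting curve $\phi_t$ defined for all $t \in [0, \infty)$, a minimizing movement for $\mathcal{M}$, and then show via the paper's regularity theory that a sufficiently regular such curve is a genuine smooth solution of \eqref{Cflow}.

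For the asymptotics I would use the gradient-flow structure quantitatively. Along the flow $\frac{d}{dt} \mathcal{M}(\phi_t) = - \Ca(\phi_t)$, where $\Ca(\phi) = \int_M (s_{\phi} - \bar{s})^2 \gw_{\phi}^n$; since $\mathcal{M}$ is bounded below this forces $\int_0^{\infty} \Ca(\phi_t)\, dt < \infty$, so $\Ca(\phi_{t_j}) \to 0$ along some sequence $t_j \to \infty$. The nonpositively curved geometry then does the rest: because $\mathcal{M}$ is geodesically convex and its gradient flow is distance non-expanding, the whole trajectory converges in $(\mathcal{H}, d)$ to the locus where $s_{\phi} = \bar{s}$, and convexity promotes subsequential convergence to convergence of the entire flow to a cscK potential.

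The hard part will be the regularity together with the exclusion of degeneration, which is precisely where Conjecture \ref{existconj} is hidden. The minimizing movement exists globally and converges as an abstract metric object essentially for free, but Calabi flow is a fourth-order parabolic equation with no maximum principle, so upgrading the weak limit to a smooth solution, and in particular ruling out finite-time or infinite-time blow-up of the curvature, equivalently preventing the metric-completion limit from lying on the boundary $\partial \mathcal{H}$ where $\gw_{\phi}$ degenerates, requires a priori control on $\Rm$ along the flow that the variational scheme does not supply. Establishing such control (or showing the singular set has the conjectured codimension) is the crux; the cscK hypothesis helps by bounding $\mathcal{M}$ below but does not by itself prevent the metric from degenerating, so additional input, such as properness of $\mathcal{M}$ or a smallness/stability assumption, appears necessary to close the argument unconditionally.
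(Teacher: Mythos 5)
The statement you were asked about is labeled as a \emph{conjecture} in the paper (Conjecture \ref{convconj}, due to Donaldson), and the paper does not prove it; it is the open problem motivating the whole work. What the paper actually establishes is strictly weaker: long time existence of a $K$-energy \emph{minimizing movement} in the metric completion $\bar{\HH}$ (Theorem \ref{fundexistthm}), some a priori regularity when $c_1 < 0$ (Theorem \ref{highreg}), a conditional statement that discrete flows with uniform $C^{4,\ga}$ bounds converge to smooth Calabi flow (Theorem \ref{convthm}), and conditional convergence statements (Theorem \ref{convergeprop}) asserting only that \emph{if} a subsequence $\phi_{t_n}$ converges in $\bar{\HH}$, then the limit minimizes $\bar{\nu}$. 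Your proposal follows the same minimizing-movement strategy as the paper for the existence part, but then asserts conclusions the machinery does not deliver.

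The concrete gaps are these. First, the limiting curve lives in $\bar{\HH}$, the metric completion, whose points need not be smooth K\"ahler potentials or even bounded potentials; upgrading to a smooth solution of (\ref{Cflow}) requires the uniform $C^{4,\ga}$ and volume-ratio bounds of (\ref{convspat})--(\ref{convcoh}), which no part of the variational scheme supplies (you acknowledge this, but acknowledging it does not close it). Second, your asymptotic argument is circular at two points: the identity $\frac{d}{dt}\nu(\phi_t) = -\Ca(\phi_t)$ presupposes a smooth solution, and the weak analogue in the paper only gives $\frac{d}{dt}\bar{\nu}(u_t) = -\brs{\N_-\bar{\nu}}^2(u_t)$ a.e., where $\brs{\N_-\bar{\nu}}$ is a metric slope, not the Calabi energy. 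Third, geodesic convexity of $\bar{\nu}$ plus distance non-expansion does not force convergence of the whole trajectory: $(\bar{\HH},\bar{d})$ is infinite dimensional and bounded sets are not compact, so boundedness of $d(\phi_t,\phi_0)$ (which the cscK hypothesis does give, via Theorem \ref{convergeprop}(1)) yields no convergent subsequence, and even a subsequential limit in $\bar{\HH}$ minimizing $\bar{\nu}$ is not known to be a smooth cscK metric. The conjecture remains open, and your write-up should be presented as a strategy with identified obstructions rather than as a proof.
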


The main purpose of this paper is to prove the long time existence of a certain
kind of weak solution to the
Calabi flow known as a ``minimizing movement" in the terminology of DeGiorgi
\cite{DeG1}.  These are solutions
constructed as limits of time-discretized  flows generated by an implicit Euler
method.  This technique involves freezing the time parameter of the gradient
flow
and constructing small time-step approximations of the flow as critical points
of
certain distance-penalizing modifications of the functional in question.  This
is a very general framework for constructing gradient flows of
functionals in metric spaces which has been significantly expanded recently in
\cite{Ambrosio}.  We will exploit an earlier instance of this methodology,
namely a 
general existence result of Mayer \cite{Mayer}, extending the Crandall-Liggett
generation theorem \cite{CL} to the setting of metric spaces with
nonpositive curvature.

Recall that Calabi flow, while conceived as the gradient flow
of the Calabi energy, is also the gradient flow
of Mabuchi's $K$-energy functional.  As it turns out, the
$K$-energy on the space of K\"ahler metrics, denoted $\HH$, has many of the
formal properties which are
usually needed in this general
setup.  For instance, the $K$-energy is convex along smooth geodesics, and
the
space of K\"ahler metrics, endowed with the Mabuchi metric, has 
nonpositive curvature, making
the distance function also convex.  This makes it quite natural to approach
existence questions
for the Calabi flow using minimizing movements.  However, due to the lack of
regularity of geodesics and the incompleteness of $\HH$, care is required in
setting things up properly.  In preparing our application of
Mayer's theorem a crucial role is played
throughout by the theory of geodesics in $\HH$ developed
in the work of Chen, \cite{ChenSOKM}, \cite{ChenSOKM3},
Calabi-Chen \cite{CalabiChen}, and Chen-Tian \cite{ChenTian}.  

\begin{thm} \label{fundexistthm} Let $(M^{2n}, \gw, J)$ be a compact K\"ahler
manifold.  Given $\phi_0 \in \bar{\HH}$ there exists a $K$-energy minimizing
movement
$\phi : [0,\infty) \to \bar{\HH}$ with initial condition $\phi_0$.
\end{thm}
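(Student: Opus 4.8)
The plan is to reduce the statement entirely to Mayer's generation theorem \cite{Mayer} for gradient flows of convex functionals on complete nonpositively curved metric spaces, applied to the space $\bar{\HH}$ with the (suitably extended) $K$-energy. Once the hypotheses of that theorem are verified, it supplies directly what is wanted: the implicit Euler (resolvent) scheme is solvable for every time step, converges as the step size tends to zero, and produces a locally Lipschitz curve $\phi : [0,\infty) \to \bar{\HH}$ defined for all time with the prescribed initial value. In particular the long time existence is \emph{not} obtained by a PDE continuation argument but is automatic from the abstract metric theory, which is the chief structural advantage of casting Calabi flow as a minimizing movement; the remaining work is purely the verification of Mayer's hypotheses.

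First I would assemble the input on the ambient space. By the geodesic theory of Chen \cite{ChenSOKM}, Calabi--Chen \cite{CalabiChen} and Chen--Tian \cite{ChenTian}, the space $(\HH,d)$ is metric with nonpositive curvature in the sense of Alexandrov, and its completion $\bar{\HH}$ is a complete geodesic space of nonpositive curvature; in particular $d^2$ is uniformly convex along geodesics. This is exactly the category of spaces to which Mayer's theorem applies. Next I would extend the $K$-energy from $\HH$ to $\bar{\HH}$. Since $\HH$ is dense, I would define $\bar{K}$ to be the lower semicontinuous relaxation $\bar{K}(\phi) = \inf\{ \liminf_i K(\phi_i) : \phi_i \in \HH, \ \phi_i \to \phi \}$, which is lower semicontinuous by construction. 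Convexity of $\bar{K}$ along the geodesics of $\bar{\HH}$ is then inherited from the convexity of $K$ along the $C^{1,1}$ geodesics of $\HH$, using that these geodesics can be approximated and that relaxation preserves geodesic convexity on a nonpositively curved space.

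Third I would check that $\bar{K}$ is proper, meaning $\bar{K} \not\equiv +\infty$ and $\bar{K} > -\infty$ everywhere. The domain is nonempty since $K$ is finite at smooth potentials. The lower bound is the one genuinely analytic point: fixing a smooth reference $\psi_0$ and running the geodesic from $\psi_0$ to any $\phi \in \HH$, convexity of $K$ gives $K(\phi) \ge K(\psi_0) + \frac{d}{dt}\big|_{0} K$, and Cauchy--Schwarz bounds the initial slope by $d(\psi_0,\phi)\sqrt{\Ca(\psi_0)}$. Hence $K(\phi) \ge K(\psi_0) - \sqrt{\Ca(\psi_0)}\, d(\psi_0,\phi)$, an affine-in-distance minorant that passes to $\bar{K}$ on $\bar{\HH}$ under relaxation. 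This shows $\bar{K} > -\infty$ and, since a quadratic penalty dominates a linear term, that the Moreau--Yosida functional $\phi \mapsto \bar{K}(\phi) + \frac{1}{2\tau} d(\phi,\psi)^2$ is coercive; together with lower semicontinuity and the uniform convexity of $d^2$ this yields a unique minimizer, so each resolvent step is well posed. With these points in hand Mayer's theorem produces the desired minimizing movement for arbitrary $\phi_0 \in \bar{\HH}$.

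I expect the main obstacle to be the second step: transporting convexity, and to a lesser extent lower semicontinuity, from the smooth setting on $\HH$ — where the convexity of the $K$-energy is classically understood along smooth geodesics — to the genuine geodesics of the completion, which are at best $C^{1,1}$ and whose endpoints may lie on the incomplete boundary $\bar{\HH} \setminus \HH$. Reconciling the incompleteness of $\HH$ with the abstract requirement of a complete domain, and confirming that the relaxed functional genuinely remains convex and lower semicontinuous there rather than merely on the dense smooth locus, is precisely where the care alluded to in the introduction must be spent.
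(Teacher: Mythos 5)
Your overall strategy is exactly the one the paper follows: verify that $(\bar{\HH},\bar{d})$ is a complete NPC space, extend $\nu$ by lower semicontinuous relaxation, check geodesic convexity and the quadratic growth condition, and invoke Mayer's theorem. The NPC step (via the Calabi--Chen comparison inequality passed to the completion), the lower bound $\bar{\nu} > -\infty$, and the verification that the constant $A$ in Mayer's theorem vanishes all go through essentially as you describe; your affine-in-distance minorant is Chen's estimate (Theorem \ref{Kenergydecay}).

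The genuine gap is in your second step. You assert that convexity of $\bar{\nu}$ along geodesics of $\bar{\HH}$ is ``inherited from the convexity of $K$ along the $C^{1,1}$ geodesics of $\HH$,'' but convexity of the $K$-energy along $C^{1,1}$ geodesics is not an available input --- it is the main thing that has to be proved. Mabuchi's second variation formula gives convexity only along \emph{smooth} geodesics, which generically do not exist between two points of $\HH$, and lower semicontinuous relaxation of a functional known to be convex only along smooth (hence non-generic) paths does not automatically produce a geodesically convex functional on the completion. The paper closes this gap in Proposition \ref{convexityprop} by a substantial argument: it replaces the geodesic segment by ``stadium'' domains $\Sigma^l$, perturbs the boundary data so that Chen--Tian's almost smooth solutions of the HCMA equation exist, invokes the deep fact (Theorem \ref{wkKenergyconv}) that the induced $K$-energy is weakly subharmonic along partially smooth solutions, averages against a cutoff in the long direction to convert two-variable subharmonicity into approximate one-variable convexity, and then takes the limits $\ge\to 0$, $\gd\to 0$, $l\to\infty$, $m\to\infty$ in the right order using interior $C^{1,\ga}$ convergence of the oval solutions to the $C^{1,1}$ geodesic together with Lemma \ref{C11lsc}. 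The same issue touches your derivation of the linear minorant: differentiating $K$ at $t=0$ along ``the geodesic'' presumes smoothness, and the rigorous version again rests on the almost smooth geodesic theory. A smaller inaccuracy: Mayer's theorem yields a curve that is H\"older continuous of exponent $\tfrac12$ up to $t=0$, not locally Lipschitz there.
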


The definition of a $K$-energy minimizing movement appears in \S \ref{WKSLNS}. 
These solutions come with a host of properties exhibiting the manner in which
they can be thought of as gradient lines for $K$-energy, and these are shown in
\S \ref{furtherprops}.  Note in particular that Theorem \ref{fundexistthm}
allows for the definition of a ``flow map'' $F : [0,\infty) \times \bar{\HH} \to
\bar{\HH}$.  This flow map satisfies the semigroup property, and is H\"older
continuous of exponent $\frac{1}{2}$ in the time variable (Theorem
\ref{Fprops}).  Furthermore, we show that for all $t$, $F(t, \cdot)
: \bar{\HH} \to \bar{\HH}$ is distance nonincreasing.  This property was shown
for smooth solutions to Calabi flow by Calabi-Chen (\cite{CalabiChen} Theorem
1.3, Theorem 1.5).

\begin{thm} \label{flowcontr1} Let $(M^{2n}, \gw,
J)$ be a compact K\"ahler manifold.  Given $\phi_0, \psi_0 \in \bar{\HH}$, for
all $t \geq 0$ one has
\begin{align*}
 d(\phi_t, \psi_t) \leq d(\phi_0, \psi_0).
\end{align*}
\end{thm}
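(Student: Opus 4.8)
The plan is to realize the flow map $F(t,\cdot)$ as a limit of iterated implicit Euler steps and to show that a single such step is nonexpansive on the nonpositively curved space $\bar{\HH}$, so that the contraction property survives both the iteration and the passage to the limit. Recall from the construction underlying Theorem \ref{fundexistthm} that, fixing a time step $\tau > 0$, one forms the resolvent $J_{\tau} : \bar{\HH} \to \bar{\HH}$ sending $\phi$ to the unique minimizer of
\begin{align*}
\psi \longmapsto \EE(\psi) + \frac{1}{2 \tau} d(\psi, \phi)^2,
\end{align*}
where $\EE$ denotes the extension of the $K$-energy to $\bar{\HH}$. The minimizing movement with initial condition $\phi_0$ is then recovered as $\phi_t = \lim_{n \to \infty} (J_{t/n})^n \phi_0$, the convergence being supplied by Mayer's theorem \cite{Mayer}.

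First I would establish the single-step estimate $d(J_{\tau} \phi, J_{\tau} \psi) \leq d(\phi, \psi)$. Writing $u = J_{\tau}\phi$ and $v = J_{\tau}\psi$, let $\gamma : [0,1] \to \bar{\HH}$ be the geodesic from $u$ to $v$. Comparing the minimizer $u$ with the competitor $\gamma(s)$, then invoking geodesic convexity of $\EE$ together with the nonpositive-curvature inequality
\begin{align*}
d(\gamma(s), \phi)^2 \leq (1-s)\, d(u, \phi)^2 + s\, d(v, \phi)^2 - s(1-s)\, d(u,v)^2,
\end{align*}
dividing by $s$ and letting $s \to 0$, one obtains
\begin{align*}
\EE(u) + \frac{1}{2\tau} d(u,\phi)^2 \leq \EE(v) + \frac{1}{2\tau}\left( d(v,\phi)^2 - d(u,v)^2 \right).
\end{align*}
The symmetric comparison of $v$ against the geodesic from $v$ to $u$ yields the same inequality with $(u,\phi)$ and $(v,\psi)$ interchanged; adding the two, the $\EE$ terms cancel and one is left with
\begin{align*}
2\, d(u,v)^2 \leq d(u,\psi)^2 + d(v,\phi)^2 - d(u,\phi)^2 - d(v,\psi)^2.
\end{align*}
Recognizing the right-hand side as twice the quasi-inner product of $\overrightarrow{uv}$ with $\overrightarrow{\phi\psi}$ and applying the Cauchy--Schwarz inequality valid in nonpositively curved spaces then gives $d(u,v)^2 \leq d(u,v)\, d(\phi,\psi)$, i.e. the desired nonexpansivity; this is precisely the resolvent contraction built into Mayer's generation scheme \cite{Mayer}, and it is the analogue for the discrete flow of the Calabi--Chen result \cite{CalabiChen}.

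Since a composition of nonexpansive maps is nonexpansive, $(J_{t/n})^n$ satisfies $d\big((J_{t/n})^n \phi_0,\, (J_{t/n})^n \psi_0\big) \leq d(\phi_0, \psi_0)$ for every $n$. Letting $n \to \infty$ and using the continuity of $d$ together with the convergence of the discrete approximations to $\phi_t$ and $\psi_t$, the inequality passes to the limit and produces $d(\phi_t, \psi_t) \leq d(\phi_0, \psi_0)$, as claimed.

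I expect the main obstacle to lie not in this chain of inequalities but in verifying that the hypotheses of the nonpositive-curvature machinery genuinely hold for $\bar{\HH}$: that $\bar{\HH}$ is a complete nonpositively curved metric space, that the $K$-energy extends to a lower semicontinuous, geodesically convex functional $\EE$, and that the resolvent $J_\tau$ is consequently well defined, with the minimizer existing and being unique. Here the incompleteness of $\HH$ and the merely weak regularity of geodesics force reliance on the theory of Chen, Calabi--Chen and Chen--Tian assembled earlier in the paper; once these structural facts are in hand, together with the identification of $\phi_t$ as the resolvent limit from Theorem \ref{fundexistthm}, the contraction estimate follows exactly as above.
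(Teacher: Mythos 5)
Your proposal is correct and follows the same overall architecture as the paper: both reduce the theorem to showing that a single resolvent step $W_{\tau}$ is nonexpansive on $\bar{\HH}$, then iterate and pass the inequality to the limit $\phi_t = \lim_{n\to\infty} W_{t/n}^n(\phi_0)$ using the distance-topology convergence from Theorem \ref{fundexistthm}. Where you diverge is in how the single-step contraction is obtained. The paper runs the geodesic $\phi_s$ between the two resolvent images through the Korevaar--Schoen quadrilateral comparison (Theorem \ref{quadcomp}), adds the convexity of $\bar{\nu}$ along that geodesic, invokes minimality of both endpoints for their respective functionals $\FF_{\psi_i,\tau}$, and sends $s \to 0$. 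You instead derive the first-order optimality inequality
\begin{align*}
\bar{\nu}(u) + \tfrac{1}{2\tau} d(u,\phi)^2 \leq \bar{\nu}(v) + \tfrac{1}{2\tau}\bigl( d(v,\phi)^2 - d(u,v)^2 \bigr)
\end{align*}
at each minimizer (using only the triangle NPC inequality (\ref{hyptri2}) plus convexity of $\bar{\nu}$), sum the two symmetric versions so the energy terms cancel, and finish with the four-point Cauchy--Schwarz (quadruple) inequality for the quasi-inner product in CAT(0) spaces. That last inequality is a genuine fact about NPC spaces (it is the Berg--Nikolaev quadrilateral inequality, and one direction follows from Reshetnyak/Korevaar--Schoen majorization), but it is not among the tools the paper assembles --- Theorem \ref{quadcomp} is imported precisely so that it is not needed --- so you would have to either cite it explicitly or derive it from the quadrilateral comparison; everything else in your chain (well-definedness and uniqueness of the resolvent, NPC completeness of $\bar{\HH}$, lower semicontinuity and geodesic convexity of $\bar{\nu}$) is established in \S\ref{ltesec} exactly as you anticipate. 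Your route buys a cleaner separation between the variational input (the two evolution-type inequalities) and the purely metric input (Cauchy--Schwarz), at the cost of one extra metric-geometry lemma; the paper's route stays entirely within the comparison theorem it has already quoted.
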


As detailed in \S \ref{ltesec}, this solution guaranteed by Theorem
\ref{fundexistthm} is a
path
through $\bar{\HH}$, the completion of $\HH$ with respect to the distance
topology.  This alone guarantees little regularity for $\phi$ beyond what comes
automatically from the regularity of closed positive (1,1) currents.  Ideally
one would like to show that this minimizing movement solution
is in fact smooth, and moreover a solution to Calabi flow.  We take two steps in
this
direction, again exploiting the theory of geodesics in $\HH$.  The
first is to establish some extra regularity for the minimizing movements in the
case $c_1 < 0$ beyond
what is guaranteed by Mayer's theorem.

\begin{thm} \label{highreg} Let $(M^{2n}, \gw, J)$ be a compact K\"ahler
manifold satisfying $c_1 < 0$.  Given $\phi_0 \in \bar{\HH}$, the K-energy
minimizing movement with
initial
condition $\phi_0$ is a map
\begin{align*}
 \phi : [0,\infty) \to \bar{\HH} \cap H_1^2 \cap \PSH(M, J).
\end{align*}
\end{thm}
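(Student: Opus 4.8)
The plan is to upgrade the abstract conclusion $\phi_t \in \bar{\HH}$ to genuine Sobolev and plurisubharmonicity regularity by coupling the energy dissipation built into the minimizing movement with the coercivity of the $K$-energy in the case $c_1 < 0$. Throughout I denote the $K$-energy by $\nu$ and normalize every potential to have zero mean, $\int_M \phi\, \gw^n = 0$; since $\nu$, the Aubin functionals, and the Dirichlet energy are all invariant under adding constants this costs nothing. I would first record that the metric completion embeds into the space of honest potentials, $\bar{\HH} \hookrightarrow \PSH(M,J)$, so that each $\phi_t$ is represented by an $\gw$-plurisubharmonic function and so that closedness of $\PSH(M,J)$ under $L^1$ (equivalently weak-current) limits is available. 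This already disposes of the $\PSH(M,J)$ membership and reduces the theorem to a uniform $H_1^2$ bound.

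The second step is energy monotonicity and smoothing. From Mayer's construction the discrete implicit-Euler iterates $\phi^\tau_k$, obtained by minimizing $\psi \mapsto \nu(\psi) + \frac{1}{2\tau} d(\psi, \phi^\tau_{k-1})^2$, obey $\nu(\phi^\tau_k) \leq \nu(\phi^\tau_{k-1}) \leq \nu(\phi_0)$, and in the limit the semigroup satisfies the regularizing estimate $\nu(\phi_t) \leq \nu(\psi) + \frac{1}{2t} d(\phi_0, \psi)^2$ for every $\psi \in \HH$ and every $t > 0$. Choosing $\psi \in \HH$ close to $\phi_0$ in $d$ and using that $\nu$ is finite on $\HH$, this yields $\sup_{t \geq \delta} \nu(\phi_t) < \infty$ for each $\delta > 0$ (and $\nu(\phi_t) \leq \nu(\phi_0)$ whenever the latter is finite). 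Thus one has a uniform $K$-energy bound along the flow, as well as on the discrete iterates $\phi^\tau_k$.

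The heart of the matter is to convert this $K$-energy bound into an $H_1^2$ bound, and this is exactly where $c_1 < 0$ enters. Using the Chen--Tian decomposition of $\nu$ into its entropy part, a term paired against $\Ric(\gw)$, and the $\bar{s}$-weighted Aubin--Yau functionals, together with the fact that $c_1 < 0$ renders the background Ricci contribution coercive, I would invoke the properness inequality; after using that the Aubin functionals dominate a nonnegative multiple of the Dirichlet energy it takes the form $\nu(\phi) \geq \epsilon \int_M \brs{\N \phi}^2 \gw^n - C$ for fixed $\epsilon, C > 0$ and all mean-zero $\phi$. Hence the $K$-energy bound forces a bound on $\int_M \brs{\N \phi}^2 \gw^n$, and with the zero-mean normalization and the Poincar\'e inequality this is a uniform $H_1^2$ bound. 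The delicate point is that this chain must be run not for smooth $\phi$ but for the possibly non-smooth minimizers $\phi^\tau_k$ and in the limit, so I would need the functionals entering the decomposition, the entropy in particular, to be lower semicontinuous along the approximation and the inequality to remain valid on finite-energy potentials.

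With the uniform $H_1^2$ bound in hand the passage to the limit is standard compactness: the $d$-convergence $\phi^\tau_k \to \phi_t$, refined to $L^2$-convergence using that $d$ dominates an $L^2$ distance, together with weak-$H_1^2$ precompactness, extracts a weak limit that must coincide with $\phi_t$; weak lower semicontinuity of the $H_1^2$ norm passes the bound to $\phi_t$, and closedness of $\PSH(M,J)$ identifies the limit as $\gw$-plurisubharmonic. Finally, for general $\phi_0 \in \bar{\HH}$ of possibly infinite $K$-energy I would approximate by smooth data $\phi_0^j \to \phi_0$, apply the contraction estimate of Theorem \ref{flowcontr1}, $d(\phi_t^j, \phi_t) \leq d(\phi_0^j, \phi_0) \to 0$, and use the regularizing estimate to bound $\sup_j \nu(\phi_t^j)$ on each $t \geq \delta$, so that the $H_1^2$ bounds survive the limit $j \to \infty$. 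I expect the main obstacle to be precisely the middle step: establishing the coercivity and entropy estimates on the non-smooth implicit-Euler minimizers, where the analytic functionals are not a priori controlled, and correctly matching the resulting weak $H_1^2$ limit with the metric-space limit $\phi_t$.
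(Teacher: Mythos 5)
Your overall architecture (a $K$-energy bound along the flow, converted into a Dirichlet-energy bound via $c_1<0$, then weak $H^2_1$ compactness plus $L^1$-closedness of $\gw$-plurisubharmonic functions to pass to the limit) is the right shape, and your endgame --- weak lower semicontinuity of the norm, identification of the weak limit with the metric limit, and handling general $\phi_0\in\bar{\HH}$ by approximation together with the contraction property --- is consistent with what the paper does. But the middle step, which you yourself flag as the heart of the matter, contains a genuine gap: you invoke a properness inequality $\nu(\phi)\geq \ge \int_M \brs{\N\phi}^2\gw^n - C$ for all mean-normalized $\phi$, claiming it follows because ``$c_1<0$ renders the background Ricci contribution coercive.'' This is not available under the hypothesis $c_1<0$ alone for an arbitrary K\"ahler class. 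Coercivity of $\nu$ with respect to $I^A$ (equivalently the Dirichlet energy) is essentially the properness of the $K$-energy, which for the $J$-type term in the decomposition (\ref{Kenergform}) requires a positivity condition on the class $c_1$ relative to $[\gw]$ of Song--Weinkove type, not merely negativity of $c_1$; it cannot simply be invoked, and your argument gives no mechanism for producing the $\ge>0$.

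The paper's actual route is weaker and distance-dependent, and that is precisely what makes it work. The only consequence of $c_1<0$ that is used is Chen's result that $J$ is convex along $C^{1,1}$ geodesics, which yields the linear bound $J(\phi)\geq -C\sqrt{V}\,d(0,\phi)$ (Lemma \ref{Jlowerbnd}); combining this with Tian's $\alpha$-invariant estimate $\frac{1}{V}\int_M\log\frac{\gw_\phi^n}{\gw^n}\gw_\phi^n\geq\frac{\ga}{V}I^A(\phi)-C$ and the decomposition (\ref{Kenergform}) gives $I^A(\phi)\leq C\bigl(\brs{\nu(\phi)}, d(0,\phi)\bigr)$, and the distance $d(0,\phi_t)$ is in turn controlled by Mayer's Lemma \ref{distcntrl}. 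So the resulting $H^2_1$ bound depends on $t$ through the distance, rather than being the uniform-in-$t$ bound your properness inequality would produce. Separately, the $L^2$ bound comes not from Poincar\'e but from comparing $\phi$ with the initial velocity of the geodesic from $0$ (Lemma \ref{L2distbnd}). Finally, your worry about running the functional inequalities on non-smooth minimizers is resolved in the paper more simply than you propose: all estimates are applied only to smooth potentials $\phi_t^j\in\HH$ approximating $\phi_t$ in the distance topology, and only the final $H^2_1$ and $\sup$ bounds are passed to the weak limit, so no lower semicontinuity of the entropy on non-smooth potentials is needed. To repair your argument, replace the properness inequality by the distance-weighted version $\nu(\phi)\geq \frac{\ga}{V}I^A(\phi)-C\bigl(1+d(0,\phi)\bigr)$ and feed in the a priori distance control along the discrete flow.
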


Lastly we establish the fact that, provided the discretized Calabi flows used to
generate the minimizing movement satisfy
sufficient a priori estimates, the limiting path through $\bar{\HH}$ is in fact
a
smooth solution to Calabi flow.  See Theorem \ref{convthm} for the precise
statement.

\begin{thm} \label{fundconv} A sequence of discrete Calabi flows with step size
approaching zero and satisfying sufficient a priori estimates contains a
subsequence converging to a smooth solution of Calabi flow.
\end{thm}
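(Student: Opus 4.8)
The plan is to combine a compactness argument with a limiting analysis of the discrete Euler--Lagrange equations. Recall that a discrete Calabi flow with step size $\tau$ is produced by iteratively setting $\phi_{k+1}$ to be the minimizer of the penalized $K$-energy $\phi \mapsto K(\phi) + \tfrac{1}{2 \tau} d(\phi, \phi_k)^2$, so that each step satisfies the backward Euler equation
\begin{align} \label{discEL}
\tfrac{1}{\tau} \left( \phi_{k+1} - \phi_k \right) =&\ s_{\phi_{k+1}} - \bar{s}.
\end{align}
Given a sequence of such flows $\{ \phi^i \}$ with step sizes $\tau_i \to 0$, I would first interpolate each in time --- for instance by the piecewise geodesic joining consecutive $\phi_k^i$, or by the piecewise constant interpolation --- to obtain paths $\phi^i : [0, \infty) \to \bar{\HH}$. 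The phrase ``sufficient a priori estimates'' should be read as uniform-in-$i$ control of the local geometry of the metrics $\gw_{\phi^i}$; concretely one wants uniform bounds on the local $L^2$ norms of the curvature $\Rm$ and of $\N^2 \Rm$ along the discrete flows, together with a lower injectivity radius bound, as these are precisely the quantities that feed into a K\"ahler analogue of the Cheeger--Gromov compactness theorem.

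With these estimates in hand the first substantive step is compactness: the uniform local curvature bounds and non-collapsing yield, after passing to a subsequence, $C^\infty_{loc}$ convergence of the $\gw_{\phi^i}$ on space-time, and hence $C^\infty_{loc}$ convergence of the potentials $\phi^i$ to a limiting path $\phi_\infty$, modulo the usual normalization of the potential by an additive constant. Here one uses the higher-order estimates to upgrade the weak subsequential limit guaranteed by the minimizing movement construction to genuine smooth convergence; an application of the Arzel\`a--Ascoli theorem in each $C^{k,\ga}$ norm does the bulk of the work once the uniform bounds are in place.

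The second step is to pass to the limit in \eqref{discEL}. On the right-hand side, $s_{\phi_{k+1}} - \bar{s}$ converges to $s_{\phi_\infty} - \bar{s}$ by continuity of the scalar curvature operator under $C^\infty_{loc}$ convergence. The subtlety lies in the left-hand side: the difference quotients $\tfrac{1}{\tau_i} ( \phi_{k+1} - \phi_k )$ must be shown to converge to $\dt \phi_\infty$. This is exactly where the uniform time-regularity coming from the a priori estimates enters: they bound the discrete time derivatives uniformly in $i$, so that the interpolants are uniformly smooth in $t$ and their difference quotients converge to the genuine time derivative of the limit. Combining the two sides shows that $\phi_\infty$ solves the Calabi flow equation \eqref{Cflow}, and the uniform $C^\infty_{loc}$ bounds show that this solution is smooth.

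The main obstacle I anticipate is precisely this control of the time-discretization in passing to the limit. The most natural interpolation, by geodesics in $\HH$, is hampered by the fact that such geodesics are only $C^{1,1}$ by Chen's regularity theorem, so one cannot naively differentiate the interpolant in $t$; the a priori estimates must therefore be strong enough to force uniform smoothness of the interpolated flows in the time variable, after which the limit of the difference quotients can be identified with $\dt \phi_\infty$. A secondary technical point is to ensure that the discrete minimizers of the penalized $K$-energy are themselves smooth critical points, so that the pointwise equation \eqref{discEL} is justified; this again follows from the elliptic regularity built into the a priori estimates.
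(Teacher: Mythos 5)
Your overall architecture (spatial compactness via Arzel\`a--Ascoli, then passage to the limit in the discrete Euler--Lagrange equation) matches the paper's, but there is a genuine gap at the central step, and it begins with your displayed ``backward Euler equation.'' Because each step penalizes by the \emph{Mabuchi geodesic distance}, the first variation of $\psi \mapsto \frac{1}{2\tau} d^2(\phi_k, \psi)$ at the minimizer is computed along the connecting $C^{1,1}$ geodesic $\gg$ (Lemma \ref{Ffirstvarlemma}), so the critical point equation reads $\frac{1}{\tau} \frac{\del \gg}{\del t}\big|_{t=1} = s_{\phi_{k+1}} - \bar{s}$, \emph{not} $\frac{1}{\tau}(\phi_{k+1} - \phi_k) = s_{\phi_{k+1}} - \bar{s}$. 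The identification of the geodesic endpoint velocity with the naive difference $\phi_{k+1} - \phi_k$ is exactly what fails to be automatic (geodesics are only $C^{1,1}$, and there is no a priori reason the velocity equals the secant), and the paper handles this by \emph{adding it as a hypothesis}: condition (\ref{convcoh}) demands $\brs{\phi_{j+1}^i - \phi_j^i - \frac{\del \gg^i_j}{\del t}}_{C^0} \leq \tau_i\, o(\tau_i)$, which is precisely the error budget needed for the telescoping sum over the $\frac{h}{\tau_i}$ steps in a window $[t_0, t_0+h]$ to close. You flag the $C^{1,1}$ regularity issue at the end, but you treat it as an interpolation nuisance rather than recognizing that your starting equation is only approximately true and that a quantitative version of that approximation must be assumed (or proved) for the argument to go through.

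The second gap is your claim that the a priori estimates ``bound the discrete time derivatives uniformly in $i$, so that the interpolants are uniformly smooth in $t$ and their difference quotients converge to the genuine time derivative.'' Uniform spatial bounds do not give uniform temporal smoothness, and the paper does not attempt this. Instead it estimates the \emph{macroscopic} difference quotient $\frac{1}{h}(\phi^i(t_0+h) - \phi^i(t_0)) - (s_{\phi^i(t_0)} - \bar{s})$ directly: after telescoping and inserting the variational equation, the remaining term is a sum of scalar curvature oscillations $\brs{s_{\phi^i_{j+1}} - s_{\phi^i_{\floor{t_0/\tau_i}}}}$, which are controlled because all the relevant potentials lie within distance $Ch$ of each other (Lemma \ref{distcntrl}) and because, under the uniform $C^{4,\ga}$ and volume-ratio bounds of (\ref{convspat}), small Mabuchi distance forces $C^{4,\ga'}$ closeness (Lemma \ref{convlemma2}, proved by a compactness--contradiction argument using Yau's estimates). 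Sending $h \to 0$ then yields differentiability in $t$ and the Calabi flow equation simultaneously. Relatedly, your reading of ``sufficient a priori estimates'' as local curvature integrals plus injectivity radius bounds feeding into Cheeger--Gromov compactness is not well adapted here: Cheeger--Gromov convergence is modulo diffeomorphisms and does not directly yield convergence of potentials within the fixed K\"ahler class; the paper's hypotheses are uniform $C^{4,\ga}$ bounds on the potentials themselves together with $C^{2,\ga}$ bounds on $\log \frac{\gw_{\phi}^n}{\gw^n}$, which is what the scalar-curvature continuity argument actually consumes.
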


\begin{rmk} A different approximation scheme for Calabi flow on projective
varieties was considered by Fine \cite{Fine}. 
Fine's construction defines an ODE on maps defining projective embeddings of
the underlying complex manifold, called balancing flow.  This is a
generalization to the Calabi flow of the techniques used by Donaldson in
constructing
cscK metrics.  The main result shows that this sequence of ODE's, with
appropriate initial conditions, converges to a solution to Calabi flow, as long
as that solution exists smoothly.  To roughly compare this approach to ours,
Fine uses a natural sequence of finite dimensional approximations of the space
of K\"ahler metrics to approximate the flow by ODEs,
whereas we deal directly with this infinite dimensional space, but discretize
the time variable of the flow.
\end{rmk}

Here is an outline of the rest of the paper.  In \S \ref{SOKM} we review some
fundamental facts on the Mabuchi-Semmes-Donaldson metric, and we continue in \S
\ref{GEOD} with a thorough discussion of the structure of geodesics in this
metric, including Chen's $\ge$-geodesics and Chen-Tian's almost smooth
geodesics.  Then in \S \ref{WKSLNS} we define the Moreau-Yosida approximations
of $K$-energy and set up
the notion of a discrete Calabi flow and a minimizing movement for $K$-energy. 
Then in \S \ref{ltesec} we recall Mayer's theorem and prove Theorem
\ref{fundexistthm}.  Section \ref{highregsec} has the proof of Theorem
\ref{highreg}, and Theorem \ref{fundconv} is proved in \S \ref{fundconvsec}.

\textbf{Acknowledgements} The author would like to thank Will Cavendish and
Weiyong He for
several helpful conversations.  The author would especially like to thank
Patrick Guidotti for a number of helpful discussions on
gradient flows in metric spaces and for directing the author to the work of Uwe
Mayer \cite{Mayer}.  Lastly the author would like to thank the referee for a
careful reading.

\section{The space of K\"ahler metrics} \label{SOKM}

In this section we recall some fundamental properties of the
Mabuchi-Semmes-Donaldson metric (\cite{MabuchiSymp}, \cite{Semmes},
\cite{Donaldson}) on a K\"ahler class, and some important functionals on this
space.  First we recall the
definition of the metric and the interpretation of a K\"ahler class as an
infinite dimensional symmetric space with nonpositive curvature.

\begin{defn} Let $(M^{2n}, \gw, J)$ be a K\"ahler manifold.  Let
\begin{align*}
\HH = \{\phi \in C^{\infty}(M) \ | \  \gw_{\phi} = \gw + \sqrt{-1} \del \delb
\phi > 0 \}.
\end{align*}
This space is often denoted $\HH_{[\gw]}$ to indicate the dependence on the
underlying K\"ahler class, but we have omitted this for notational simplicity,
and consider a given compact K\"ahler manifold with given K\"ahler class as
fixed throughout.
\end{defn}

\noindent Given $\phi \in \HH$, one has $T \HH_{\phi} \cong C^{\infty}(M)$.  
One can use the $L^2$ inner product associated to $\phi$ to define an inner
product on
$T\HH_{\phi}$.  In particular, given $f_1, f_2 \in T \HH_{\phi}$, let
\begin{align*}
\left< f_1, f_2 \right>_{\phi} :=&\ \int_M f_1 f_2 \gw_{\phi}^n.
\end{align*}
This defines a Riemannian metric on $\HH$.  By calculating formally (see \S
\ref{GEOD}) one obtains the
geodesic
equation of a path $\phi_t \in \HH$:
\begin{gather} \label{geod}
\ddot{\phi} = \frac{1}{2} \brs{ \N \dot{\phi}}^2_{\phi(t)},
\end{gather}
where $\dot{\phi} = \frac{\del \phi}{\del t}$, etc.

The Levi-Civita connection of this metric is most easily defined in terms of
differentiation of vector
fields along paths.  In particular, if $\phi(t)$ is a one-parameter family of
$\HH$ and $\psi(t)$ is a smooth family of tangent vectors along
this path, we set
\begin{align} \label{dirdef}
\frac{D}{\del t} \psi = \dot{\psi} - \frac{1}{2} \left< \N \psi, \N \dot{\phi}
\right>_{\phi}.
\end{align}
This connection satisfies the compatibility condition
\begin{align*}
\frac{\del}{\del t} \left< \psi_1, \psi_2 \right>_{\phi} =&\ \left<
\frac{D}{\del t} \psi_1, \psi_2 \right>_{\phi} + \left< \psi_1, \frac{D}{\del t}
\psi_2 \right>_{\phi}.
\end{align*}

\noindent Amazingly, this metric gives $\HH$ the structure of an infinite
dimensional symmetric space of nonpositive curvature.  This fact, through its
manifestation in Lemma \ref{triangleineq}, plays an essential role in the proof
of Theorem \ref{fundexistthm}.  We record the specific curvature calculation for
completeness.

\begin{lemma} \label{curvaturecalc} (\cite{MabuchiSymp} Theorem 4.3) Suppose
$\ga, \gb \in T_{\phi} \HH$.  Then
\begin{align*}
R(\ga, \gb) \gg =&\ -  \{ \{\ga, \gb \}_{\phi}, \gg
\}_{\phi}
\end{align*}
where $\{ \cdot, \cdot \}_{\phi}$ denotes the Poisson bracket associated to the
symplectic manifold $(M, \gw_{\phi})$.  In particular, the sectional curvatures
satisfy
\begin{align*}
\left< R(\ga, \gb) \gb, \ga \right> =&\ - \nm{ \{\ga, \gb\}_{\phi}}{L^2}^2.
\end{align*}
\end{lemma}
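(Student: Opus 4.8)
The plan is to obtain the curvature tensor directly from the Levi--Civita connection \eqref{dirdef}, realizing $R$ as the commutator of covariant derivatives along a two-parameter family. I would choose a smooth two-parameter family $\phi(s,t) \in \HH$ with $\phi(0,0) = \phi$, $\del_s \phi|_0 = \ga$ and $\del_t \phi|_0 = \gb$, and extend the given $\gg \in T_\phi \HH$ to a field $\gg(s,t)$ of tangent vectors along the family. Since the coordinate fields $\del_s$ and $\del_t$ commute, the $[\del_s,\del_t]$ term in the definition of $R$ drops out and
\begin{align*}
R(\ga, \gb) \gg = \frac{D}{\del s} \frac{D}{\del t} \gg - \frac{D}{\del t} \frac{D}{\del s} \gg
\end{align*}
at $s = t = 0$. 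Formula \eqref{dirdef} applies verbatim to differentiation in the $s$- and $t$-directions, so the entire computation reduces to applying \eqref{dirdef} twice and antisymmetrizing; because $R$ is tensorial in $\gg$, the choice of extension is immaterial.

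The essential analytic input is the variation of the pointwise pairing $\IP{\N \cdot, \N \cdot}_{\phi}$ as $\phi$ moves, since this pairing is computed with respect to the metric $g_\phi$ determined by $\gw_\phi$. Writing $(g_\phi)_{i \bj} = g_{i \bj} + \del_i \del_{\bj} \phi$, one has
\begin{align*}
\del_t (g_\phi)_{i \bj} = \del_i \del_{\bj} \dot{\phi}, \qquad \del_t (g_\phi)^{i \bj} = - (g_\phi)^{i \bk} (g_\phi)^{l \bj} \del_k \del_{\bl} \dot{\phi},
\end{align*}
and similarly in the $s$-direction. I would first compute $\frac{D}{\del t} \gg$ from \eqref{dirdef}, then apply $\frac{D}{\del s}$, taking care to differentiate both the explicit gradient pairing and the inverse metric concealed inside it. Many of the resulting terms --- the double spatial derivatives of $\gg$, the contributions of the mixed derivative $\del_s \del_t \phi$, and all couplings symmetric under interchange of the $s$- and $t$-roles --- are manifestly symmetric in $(\ga, \gb)$ and therefore cancel upon subtracting the opposite ordering.

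The heart of the argument is to recognize that the antisymmetric remainder reassembles into a nested Poisson bracket. Using the coordinate expression $\{ f, h \}_\phi = \sqrt{-1}\, (g_\phi)^{i \bj} ( \del_i f \, \del_{\bj} h - \del_{\bj} f \, \del_i h )$ for the bracket attached to $(M, \gw_\phi)$ (with normalization fixed to match $\gw_\phi$), the surviving terms, which are first order in $\gg$, are exactly those of $- \{ \{ \ga, \gb \}_\phi, \gg \}_\phi$. I expect this bookkeeping --- matching the raw index expressions produced by the two covariant derivatives, with every metric-variation term in its correct place, against the fully expanded double bracket --- to be the main obstacle, and in particular the step most prone to sign and normalization errors. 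As a consistency check one may note that $\HH$ is formally a symmetric space, for which $R(X,Y)Z = -[[X,Y],Z]$ with the Lie bracket of Hamiltonian vector fields identified, up to sign, with the Poisson bracket; this predicts both the shape of the answer and its overall sign.

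Finally, the sectional curvature formula follows by pairing the operator identity with $\ga$ in the $L^2$ inner product and setting $\gg = \gb$:
\begin{align*}
\IP{R(\ga, \gb) \gb, \ga} = - \int_M \{ \{ \ga, \gb \}_\phi, \gb \}_\phi \, \ga \, \gw_\phi^n.
\end{align*}
Because the Hamiltonian vector field of any function is divergence-free for $\gw_\phi$, the $L^2$ pairing makes $\{ \cdot, \cdot \}_\phi$ skew, so that $\int_M \{ f, h \}_\phi \, k \, \gw_\phi^n = - \int_M \{ f, k \}_\phi \, h \, \gw_\phi^n$. Integrating by parts in this fashion and invoking the skew-symmetry of the bracket collapses the integral, after careful sign tracking, to $- \nm{ \{ \ga, \gb \}_\phi }{L^2}^2$, as claimed.
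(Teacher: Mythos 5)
First, a point of comparison: the paper does not prove this lemma at all --- it is quoted from Mabuchi (\cite{MabuchiSymp} Theorem 4.3) and recorded only for completeness --- so your computation of $R$ as the commutator of the covariant derivatives (\ref{dirdef}) along a two-parameter family is not an alternative to the paper's argument but the standard route to the result itself. The overall strategy (tensoriality in $\gg$, vanishing of the $[\del_s,\del_t]$ term, variation of the inverse metric hidden in the gradient pairing, reassembly of the antisymmetric remainder into a double Poisson bracket) is the right one.

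That said, there is a genuine gap, and it sits exactly where you predict trouble. The identification of $\frac{D}{\del s}\frac{D}{\del t}\gg - \frac{D}{\del t}\frac{D}{\del s}\gg$ with $-\{\{\ga,\gb\}_{\phi},\gg\}_{\phi}$ is deferred entirely to unperformed ``bookkeeping,'' and the one step you do spell out comes out with the wrong sign. For any symplectic form the bracket satisfies the cyclic identity $\int_M \{f,h\}\,k\,\gw_{\phi}^n = \int_M f\,\{h,k\}\,\gw_{\phi}^n$ (a consequence of the Leibniz rule and $\int_M \{u,v\}\,\gw_{\phi}^n = 0$, and hence independent of the normalization of the bracket), whence
\begin{align*}
\int_M \{\{\ga,\gb\}_{\phi},\gb\}_{\phi}\,\ga\,\gw_{\phi}^n = \int_M \{\ga,\gb\}_{\phi}\,\{\gb,\ga\}_{\phi}\,\gw_{\phi}^n = - \nm{\{\ga,\gb\}_{\phi}}{L^2}^2,
\end{align*}
so pairing the operator formula $R(\ga,\gb)\gb = -\{\{\ga,\gb\}_{\phi},\gb\}_{\phi}$ with $\ga$ yields $+\nm{\{\ga,\gb\}_{\phi}}{L^2}^2$, not $-\nm{\{\ga,\gb\}_{\phi}}{L^2}^2$; no amount of careful sign tracking in the integration by parts changes this. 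What this reveals is that the two displayed formulas are compatible only after one pins down the curvature convention and the normalization of $\{\cdot,\cdot\}_{\phi}$ relative to the factor $\tfrac{1}{2}$ in (\ref{dirdef}), and your commutator computation will produce exactly one of the two signs. Until you actually expand the double covariant derivative and determine which sign and which constant multiple of the double bracket emerges, the proof is incomplete at its decisive point: the conclusion the rest of the paper needs is the nonpositivity $\IP{R(\ga,\gb)\gb,\ga} \leq 0$, and that is precisely the sign your argument leaves unresolved.
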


\subsection{Decomposition of \texorpdfstring{$\HH$}{H}}

There is a natural decomposition
\begin{align*}
T \mathcal H_{\phi} = \left\{ \psi \left| \int_M \psi dV_{\phi} = 0
\right.\right\} \oplus \mathbb R.
\end{align*}
We can naturally decompose the entire space $\mathcal H = \mathcal H_0 \times
\mathbb R$ according to this decomposition.  In particular, let $\ga$ be the
$1$-form on $\mathcal H$ defined by
\begin{align*}
\ga_{\phi}(\psi) = \int_M \psi dV_{\phi}.
\end{align*}
One can check that $\ga$ is closed, and moreover that there exists a unique
function $I : \mathcal H \to \mathbb R$ such that $I(0) = 0$ and $\ga = dI$.  By
directly integrating one computes
\begin{align} \label{Iformula2}
I(\phi) = \sum_{j = 0}^n \frac{1}{(j+1)!(n-j)!} \int_M \phi \gw^{n-j} \wedge
\left(\sqrt{-1} \del
\delb \phi \right)^j.
\end{align}
This functional $I$ has an interpretation as a an integral along paths akin to
the $K$-energy.  Specifically, it follows from a calculation in (\cite{ChenLBM}
pg. 615) that in fact
\begin{align} \label{Iformula}
I(\phi) = \frac{1}{V} \int_0^1 \int_M \dot{\phi} \gw_{\phi}^n dt
\end{align}
where $\phi : [0,1] \to \HH$ is a smooth one-parameter family satisfying $\phi_0
= 0$, $\phi_1 =
\phi$.

\begin{rmk} It follows from (\ref{Iformula}) that the Calabi flow preserves
$\HH_0$, and thus it is natural to restrict our attention entirely to this
space.
\end{rmk}

\subsection{Functionals on \texorpdfstring{$\HH$}{H}}

In this subsection we recall some functionals on $\HH$ and some of their
properties for convenience.

\begin{defn} Let $(M^{2n}, \gw, J)$ be a compact K\"ahler manifold.  The
\emph{$K$-energy functional} is
\begin{align} \label{Kenergy}
\nu(\phi) =&\ - \int_0^1 \int_M (s(\gw_{\phi}) - \bar{s}) \dot{\phi}
\gw_{\phi}^{\wedge n} dt,
\end{align}
where $\phi :[0,1] \to \HH$ is a smooth one-parameter family satisfying $\phi_0
= 0$, $\phi_1 = \phi$.
\end{defn}

\begin{rmk} Herein we do not concern ourselves with the ``modified'' $K$-energy
which is used in the presence of a continuous group of automorphisms of $(M, J)$
(see \cite{Calabi}, \cite{Futaki}).  The results of this paper apply to
constructing minimizing movements for this modified $K$-energy as well, although
we do not explicitly do this here.  We have a very brief discussion of the
convergence properties of minimizing movements below, and it is likely that use
of the modified $K$-energy will be necessary in deriving more precise
convergence results.
\end{rmk}

\begin{defn} Let $(M^{2n}, \gw, J)$ be a compact K\"ahler manifold.  Let
\begin{align} \label{Jformula}
J(\phi) =&\ - \frac{1}{(n-1)!} \int_0^1 \int_M \dot{\phi} \rho(\gw) \wedge
\gw_{\phi_t}^{n-1},
\end{align}
where $\phi : [0,1] \to \HH$ is a smooth one-parameter family satisfying $\phi_0
= 0, \phi_1 =
\phi$.  By integrating along a straight line path one obtains the explicit
formula
\begin{align} \label{Jformula2}
 J(\phi) =&\ - \sum_{j=0}^{n-1} \frac{1}{(j+1)!(n-j-1)!} \int_M \phi \rho(\gw)
\wedge \gw^{n-j-1} \wedge \left( \sqrt{-1} \del \delb \phi \right)^j.
\end{align}
\end{defn}

\begin{defn} Let $(M^{2n}, \gw, J)$ be a compact K\"ahler manifold.  Let
\begin{align} \label{IAdef}
I^A(\phi) =&\ \frac{1}{V} \int_M \phi \left( \gw^n - \gw_{\phi}^n \right) =
\frac{1}{V} \sum_{i=0}^{n-1} \int_M \sqrt{-1} \del \phi \wedge \delb \phi \wedge
\gw^i \wedge \gw_{\phi}^{n-1-i},\\ \label{JAdef}
J^A(\phi) =&\ \frac{1}{V} \sum_{i=0}^{n-1} \frac{i+1}{n+1} \int_M \sqrt{-1} \del
\phi \wedge \delb \phi \wedge \gw^i \wedge \gw_{\phi}^{n-1-i}.
\end{align}
\end{defn}

\begin{rmk} We have decorated these functionals with the superscript $A$, since
these functionals typically are denoted by $I$ and $J$, but so unfortunately are
the functionals defined in (\ref{Iformula}), (\ref{Jformula}).
\end{rmk}

\noindent These functionals satisfy the inequality
\begin{align*}
\frac{1}{n} J^A(\phi) \leq I^A(\phi) - J^A(\phi) \leq n J^A(\phi).
\end{align*}

The next lemma gives an explicit form for the $K$-energy, which is arrived at by
evaluating the definition (\ref{Kenergy}) along linear paths.  This formulation
has the further benefit of showing that the $K$-energy is well-defined for
$C^{1,1}$ limits of metrics.
\begin{lemma} (\cite{ChenLBM} pg. 1) Let $(M^{2n}, \gw, J)$ be a compact
K\"ahler manifold.  Then
\begin{align} \label{Kenergform}
\nu(\phi) = \int_M \log \frac{\gw_{\phi}^n}{\gw^n} \gw_{\phi}^n + J(\phi)
+ \bar{s}
I(\phi).
\end{align}
\end{lemma}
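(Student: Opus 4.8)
The plan is to verify the identity by differentiating both sides along a smooth path and matching first variations. Write $E(\phi)$ for the right-hand side of (\ref{Kenergform}). Both $\nu$ and $E$ vanish at $\phi = 0$, since there $\gw_\phi = \gw$ forces the logarithmic term to vanish, and $I(0) = J(0) = 0$ by their defining integrals. Since $\nu$, $I$, and $J$ are all defined as path integrals that are known to be path-independent --- equivalently, the Mabuchi one-form $\psi \mapsto -\int_M(s(\gw_\phi) - \bar s)\psi\,\gw_\phi^n$ is closed --- it suffices to fix a smooth family $\phi_t$ from $0$ to $\phi$ (the linear path $\phi_t = t\phi$ is the convenient choice, matching the explicit formulas (\ref{Iformula2}), (\ref{Jformula2})) and to show $\frac{d}{dt}E(\phi_t) = -\int_M(s(\gw_{\phi_t}) - \bar s)\dot\phi_t\,\gw_{\phi_t}^n$, which is exactly $\frac{d}{dt}\nu(\phi_t)$ read off from (\ref{Kenergy}).

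The computation proceeds term by term. First I would record the two standard K\"ahler identities in play: $\frac{d}{dt}\gw_\phi^n = (\gD_\phi\dot\phi)\,\gw_\phi^n$, where $\gD_\phi = \tr_{\gw_\phi}\sqrt{-1}\del\delb$, and the Ricci potential identity $\Ric(\gw_\phi) = \rho(\gw) - \sqrt{-1}\del\delb\log\frac{\gw_\phi^n}{\gw^n}$, whose trace against $\gw_\phi$ gives $\gD_\phi\log\frac{\gw_\phi^n}{\gw^n} = \tr_{\gw_\phi}\rho(\gw) - s(\gw_\phi)$. Differentiating the logarithmic term, the contribution $\int_M\gD_\phi\dot\phi\,\gw_\phi^n$ vanishes, and self-adjointness of $\gD_\phi$ against $\gw_\phi^n$ together with the Ricci potential identity turns the remaining piece into $\int_M\dot\phi\big(\tr_{\gw_\phi}\rho(\gw) - s(\gw_\phi)\big)\gw_\phi^n$. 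Differentiating $J$ along the path reproduces, via (\ref{Jformula}) and the pointwise identity $\eta\wedge\gw_\phi^{n-1} = \frac{1}{n}(\tr_{\gw_\phi}\eta)\gw_\phi^n$, a term proportional to $\int_M\dot\phi\,\tr_{\gw_\phi}\rho(\gw)\,\gw_\phi^n$, while differentiating $\bar s I$ reproduces, via (\ref{Iformula}), a term proportional to $\bar s\int_M\dot\phi\,\gw_\phi^n$.

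The crux is then purely a matter of constants: the factorial weights built into the definitions of $I$ and $J$ and the normalization by $V$ are arranged precisely so that the $\tr_{\gw_\phi}\rho(\gw)$ contribution from the logarithmic term cancels against that from $J$, and the contribution of $I$ supplies the $+\bar s\int_M\dot\phi\,\gw_\phi^n$ needed to complete $-\int_M(s - \bar s)\dot\phi\,\gw_\phi^n$. Integrating in $t$ from $0$ to $1$ and using $E(0) = \nu(0) = 0$ then yields (\ref{Kenergform}). I expect the only genuine obstacle to be bookkeeping: tracking the normalization of the $L^2$ measure consistently across (\ref{Kenergy}), (\ref{Iformula}), and (\ref{Jformula}) so that the cancellations are exact, and justifying the integration by parts (self-adjointness of $\gD_\phi$ and the vanishing of $\int_M\gD_\phi\dot\phi\,\gw_\phi^n$), which is routine for smooth $\phi$. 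This is in essence Chen's calculation in \cite{ChenLBM}.
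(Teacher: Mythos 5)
Your proposal is correct and follows essentially the same route as the paper, which gives no proof of its own but cites \cite{ChenLBM} and notes that the formula ``is arrived at by evaluating the definition (\ref{Kenergy}) along linear paths'' --- precisely the first-variation computation you carry out, with the same cancellation of the $\tr_{\gw_\phi}\rho(\gw)$ terms between the logarithmic term and $J$, and the same caveat about tracking the $n!$ and $V$ normalizations. No gaps beyond the bookkeeping you already flag.
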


\noindent Next we record a crucial observation regarding the $K$-energy, namely
its
convexity along smooth geodesics in $\HH$.

\begin{lemma} \label{Kenergyconv} (\cite{Mabuchi} Theorem 6.2) Let $\phi_t$ be a
path in $\mathcal H$.  Then
\begin{align*}
\frac{d^2 \nu(\phi)}{d t^2} =&\ \frac{1}{V} \nm{\delb \N^{1,0}
\dot{\phi}}{L^2(\gw_{\phi})}^2 - \int_M \left( \ddot{\phi} - \frac{1}{2} \brs{\N
\dot{\phi}}^2 \right) (s_{\phi} - \bar{s}) dV_{\phi}.
\end{align*}
In particular, the $K$-energy is weakly convex along a smooth geodesic.
\end{lemma}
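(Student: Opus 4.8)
The plan is to differentiate the line-integral definition (\ref{Kenergy}) twice. Since $\nu$ is the primitive of the one-form $\psi \mapsto -\int_M (s_\phi - \bar{s})\psi\,\gw_\phi^n$ on $\HH$, its first variation along the path $\phi_t$ is
\[
\frac{d \nu(\phi_t)}{dt} = - \int_M (s_\phi - \bar{s}) \dot{\phi}\, \gw_\phi^n.
\]
Differentiating once more, and using the variation of the volume form $\frac{\del}{\del t}\gw_\phi^n = (\gD_\phi \dot{\phi})\gw_\phi^n$ coming from $\dot{\gw}_\phi = \sqrt{-1}\,\del\delb\dot{\phi}$, I obtain three terms:
\[
\frac{d^2 \nu}{dt^2} = - \int_M \dot{s}_\phi\, \dot{\phi}\, \gw_\phi^n - \int_M (s_\phi - \bar{s}) \ddot{\phi}\, \gw_\phi^n - \int_M (s_\phi - \bar{s}) \dot{\phi}\, (\gD_\phi \dot{\phi})\, \gw_\phi^n.
\]

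The heart of the matter is the first term, which I would rewrite using the linearization of the scalar curvature. In the K\"ahler setting this linearization is governed by the Lichnerowicz operator $\DD^*\DD$, where $\DD = \delb\N^{1,0}$; concretely one has $\dot{s}_\phi = -\DD^*\DD\dot{\phi} + \IP{\N s_\phi, \N\dot{\phi}}$, the first-order remainder being the pairing of $\N s_\phi$ with $\N\dot\phi$. Substituting this and integrating the fourth-order operator by parts yields
\[
- \int_M \dot{s}_\phi\, \dot{\phi}\, \gw_\phi^n = \nm{\delb\N^{1,0}\dot{\phi}}{L^2(\gw_\phi)}^2 - \int_M \IP{\N s_\phi, \N\dot{\phi}}\, \dot{\phi}\, \gw_\phi^n,
\]
which produces the asserted leading term (the normalizing factor $V^{-1}$ emerging from the definitions) together with a first-order remainder.

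It remains to assemble the lower-order pieces. Integrating the Laplacian term by parts gives
\[
- \int_M (s_\phi - \bar{s}) \dot{\phi}\, (\gD_\phi \dot{\phi})\, \gw_\phi^n = \int_M \dot{\phi}\, \IP{\N s_\phi, \N\dot{\phi}}\, \gw_\phi^n + \frac{1}{2}\int_M (s_\phi - \bar{s})\, \brs{\N\dot{\phi}}^2\, \gw_\phi^n,
\]
where the coefficient $\frac{1}{2}$ reflects the convention, fixed by the geodesic equation (\ref{geod}), relating the complex Laplacian $\gD_\phi$ to the gradient norm $\brs{\N\dot{\phi}}^2$. The two $\IP{\N s_\phi, \N\dot{\phi}}\dot{\phi}$ remainders now cancel, and combining what survives with the term $-\int_M (s_\phi - \bar{s})\ddot{\phi}\,\gw_\phi^n$ produces exactly $-\int_M \left(\ddot{\phi} - \frac{1}{2}\brs{\N\dot{\phi}}^2\right)(s_\phi - \bar{s})\,\gw_\phi^n$, as claimed. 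The main technical obstacle is precisely this constant-tracking: pinning down the exact form and sign of the scalar-curvature linearization, and reconciling the complex-versus-Riemannian gradient conventions along with the normalization factors (powers of $V$ and $n!$) that distinguish $\gw_\phi^n$ from $dV_\phi$ in the two terms of the identity. Finally, weak convexity along a smooth geodesic is immediate from (\ref{geod}): there $\ddot{\phi} - \frac{1}{2}\brs{\N\dot{\phi}}^2 = 0$, so the second term vanishes and the manifestly nonnegative first term alone survives, giving $\frac{d^2 \nu}{dt^2} \geq 0$.
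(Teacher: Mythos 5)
The paper does not prove this lemma; it is quoted directly from Mabuchi (\cite{Mabuchi} Theorem 6.2), and your computation is precisely the standard derivation given there: differentiate the first variation $-\int_M (s_\phi-\bar{s})\dot\phi\,\gw_\phi^n$, insert the linearization $\dot{s}_\phi = -\DD^*\DD\dot\phi + \IP{\N s_\phi,\N\dot\phi}$ with $\DD=\delb\N^{1,0}$, integrate by parts, and watch the two $\IP{\N s_\phi,\N\dot\phi}\dot\phi$ cross terms cancel. The outline is correct and complete in structure; the only step you assert rather than verify is the exact form of the scalar-curvature linearization and the complex-versus-Riemannian normalization of $\brs{\N\dot\phi}^2$, which you rightly flag as the place where the constants must be tracked consistently.
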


\section{The structure of geodesics} \label{GEOD}
\subsection{Approximate geodesics and properties of the distance function}
\label{approxgeodss}

In this section we recall various results on geodesics in the space of K\"ahler
metrics.  We begin with the basic definitions and some variational formulas. 
Then we recall Chen's theory of $\ge$-approximate geodesics \cite{ChenSOKM},
which suffice for proving
most statements concerning the distance function on $\HH$.  In the next
subsection we will recall some fundamental facts about the more profound
regularity results concerning the ``partially smooth'' and ``almost smooth''
geodesics of \cite{ChenTian}.  The starting point is to define the
infinitesimal energy element for a path in $\HH$.  The fundamental definitions
are formally identical to those from Riemannian geometry.

\begin{defn} Let $\gg : [0,1] \to \HH$ be a smooth path.  The \emph{energy
element along $\gg$} is
\begin{align*}
E(t) := \int_M \brs{\frac{\del \gg}{\del t}(t)}^2 \gw_{\gg(t)}^n.
\end{align*}
\end{defn}

\begin{defn} Given $\gg : [0,1] \to \HH$ a smooth path, the \emph{length} of
$\gg$ is
\begin{align*}
\mathcal L(\gg) := \int_0^1 E(t)^{\frac{1}{2}} dt.
\end{align*}
Moreover, the \emph{energy} of a path in $\HH$ is
\begin{align*}
\EE(\gg) := \int_0^1 E(t) dt.
\end{align*}
\end{defn}

\begin{defn} Given $\phi,\psi \in \HH$, the \emph{distance from $\phi$ to
$\psi$} is
 \begin{align*}
  d(\phi,\psi) =&\ \inf_{\gg : \phi \to \psi} \LL(\gg).
 \end{align*}
\end{defn}

We next compute the first variation of the length integral.  This was done in
(\cite{Mabuchi} Theorem
7.3) for endpoint-fixed variations, which will not suffice for our purposes. 
While the calculation is formally identical to the usual calculation in
Riemannian geometry, we include it
for convenience.

\begin{lemma} \label{lengthvariations} Let $\gg_i : [0,1] \to \HH, i = 1,2$ be
smooth
paths, and suppose
\begin{align*}
\gg(t,s) : [0,1] \times [0,1] \to \HH
\end{align*}
is a smooth family of curves connecting $\gg_1(s)$ to $\gg_2(s)$.  Then
\begin{align*}
\frac{d \LL(\gg(\cdot, s))}{ds}  =&\ \left. E(s,t)^{-\frac{1}{2}}
\IP{\IP{\frac{\del \gg}{\del s}, \frac{\del \gg}{\del t}}}_{\gg(t,s)}
\right|_{t=0}^{t=1}\\
&\ \qquad + \int_0^1 E(s,t)^{-\frac{1}{2}} \IP{\IP{\frac{D}{\del t} \frac{\del
\gg}{\del t}, \frac{\IP{\IP{\frac{\del \gg}{\del s}, \frac{\del \gg}{\del
t}}}}{\IP{\IP{\frac{\del \gg}{\del t}, \frac{\del \gg}{\del t}}}} \frac{\del
\gg}{\del t} - \frac{\del \gg}{\del s}}} dt.
\end{align*}
\begin{proof} We directly compute
\begin{align*}
\frac{d \LL(\gg(\cdot, s))}{ds} =&\ \frac{d}{ds} \int_0^1 \left[
\IP{\IP{\frac{\del \gg}{\del t}, \frac{\del \gg}{\del t}}} \right]^{\frac{1}{2}}
dt\\
=&\ \int_0^1 E(s,t)^{-\frac{1}{2}} \IP{\IP{\frac{D}{\del s} \frac{\del \gg}{\del
t}, \frac{\del \gg}{\del t}}} dt\\
=&\ \int_0^1 E(s,t)^{-\frac{1}{2}} \IP{\IP{\frac{D}{\del t} \frac{\del \gg}{\del
s}, \frac{\del \gg}{\del t}}} dt\\
=&\ \int_0^1 E(s,t)^{-\frac{1}{2}} \left[ \frac{d}{dt} \IP{\IP{\frac{\del
\gg}{\del s}, \frac{\del \gg}{\del t}}} - \IP{\IP{\frac{\del \gg}{\del s},
\frac{D}{\del t} \frac{\del \gg}{\del t}}} \right] dt\\
=&\ \int_0^1 \left[ \frac{d}{dt} \left( E(s,t)^{-\frac{1}{2}} \IP{\IP{\frac{\del
\gg}{\del s}, \frac{\del \gg}{\del t}}} + E(s,t)^{-\frac{3}{2}}
\IP{\IP{\frac{\del \gg}{\del s}, \frac{\del \gg}{\del t}}} \IP{\IP{\frac{D}{\del
t}\frac{\del \gg}{\del t}, \frac{\del \gg}{\del t}}} \right)\right.\\
&\ \left. \qquad  - E(s,t)^{-\frac{1}{2}} \IP{\IP{\frac{\del \gg}{\del s},
\frac{D}{\del t} \frac{\del \gg}{\del t}}} \right]dt\\
=&\ \left. E(s,t)^{-\frac{1}{2}} \IP{\IP{\frac{\del \gg}{\del s}, \frac{\del
\gg}{\del t}}} \right|_{t=0}^{t=1} + \int_0^1 E(s,t)^{-\frac{1}{2}}
\IP{\IP{\frac{D}{\del t} \frac{\del \gg}{\del t}, \frac{\IP{\IP{\frac{\del
\gg}{\del s}, \frac{\del \gg}{\del t}}}}{\IP{\IP{\frac{\del \gg}{\del t}
\frac{\del \gg}{\del t}}}} \frac{\del \gg}{\del t} - \frac{\del \gg}{\del s}}}
dt.
\end{align*}
\end{proof}
\end{lemma}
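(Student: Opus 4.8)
The plan is to mimic the classical first-variation-of-arc-length computation, the only genuinely new feature being that the endpoints $\gg(0,s) = \gg_1(s)$ and $\gg(1,s) = \gg_2(s)$ are allowed to move with $s$, so the boundary contribution must be retained rather than discarded. Throughout I would write $T = \frac{\del \gg}{\del t}$ and $X = \frac{\del \gg}{\del s}$, and recall that $E(s,t) = \IP{\IP{T,T}}_{\gg(t,s)}$.

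First I would differentiate $\LL(\gg(\cdot,s)) = \int_0^1 E(s,t)^{1/2}\, dt$ under the integral sign and apply the metric-compatibility identity recorded just after \eqref{dirdef}, valid in both the $s$- and $t$-directions, to rewrite $\frac{\del}{\del s} E = 2\IP{\IP{\frac{D}{\del s}T, T}}$. This yields $\frac{d}{ds}\LL = \int_0^1 E^{-1/2}\IP{\IP{\frac{D}{\del s}T, T}}\, dt$, reducing everything to a single covariant derivative.

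The crucial structural input is that the connection \eqref{dirdef} is torsion-free, i.e. $\frac{D}{\del s}\frac{\del\gg}{\del t} = \frac{D}{\del t}\frac{\del\gg}{\del s}$; this follows immediately from \eqref{dirdef}, since the mixed partials $\frac{\del^2\gg}{\del s\,\del t}$ commute and the correction term $\frac{1}{2}\IP{\IP{\N(\cdot), \N(\cdot)}}$ is symmetric in its two slots. Substituting $\frac{D}{\del s}T = \frac{D}{\del t}X$ and then integrating by parts in $t$ via $\IP{\IP{\frac{D}{\del t}X, T}} = \frac{d}{dt}\IP{\IP{X,T}} - \IP{\IP{X, \frac{D}{\del t}T}}$ produces a boundary term together with a bulk term. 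Because the weight $E^{-1/2}$ itself depends on $t$, the integration by parts must be carried out with this factor included; differentiating it gives $\frac{d}{dt}E^{-1/2} = -E^{-3/2}\IP{\IP{\frac{D}{\del t}T, T}}$ (again by metric compatibility), and this is exactly what reassembles the expression into the stated form, with the velocity direction $T$ entering through the orthogonal-projection combination $\frac{\IP{\IP{X,T}}}{\IP{\IP{T,T}}}T - X$.

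I expect the only delicate point to be bookkeeping rather than any deep obstacle. One must resist dropping the boundary term $E^{-1/2}\IP{\IP{X,T}}$ evaluated from $t=0$ to $t=1$, which encodes the motion of the endpoints and is precisely what makes this statement more general than the endpoint-fixed version in \cite{Mabuchi}; and one must track the $t$-dependence of the $E^{-1/2}$ weight correctly through the integration by parts. Since these are formal manipulations on smooth paths in $\HH$, no regularity subtleties arise, and the Riemannian computation transfers essentially verbatim.
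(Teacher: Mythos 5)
Your proposal is correct and follows essentially the same route as the paper's proof: differentiate under the integral using metric compatibility, use the torsion-free symmetry $\frac{D}{\del s}\frac{\del \gg}{\del t} = \frac{D}{\del t}\frac{\del \gg}{\del s}$, and integrate by parts in $t$ while tracking the $E^{-1/2}$ weight, retaining the endpoint boundary term. No gaps; the bookkeeping you flag is exactly what the paper's computation carries out.
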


This lemma in particular establishes the geodesic equation (\ref{geod}). 
However, as mentioned above, the lack of
full $C^{\infty}$ regularity results for geodesic paths in
$\mathcal H$ is an essential difficulty in working with $\HH$.
Nonetheless, by exploiting $\ge$-approximate geodesics, which we will define
below, one can establish foundational results on the metric space structure of
$\HH$.

\begin{defn} Let $(M^{2n}, \gw, J)$ a compact K\"ahler manifold, and fix
$\Omega$ a background volume form on $M$.  A one-parameter family
$\gg :[0,1] \to \HH$ is an \emph{$\ge$-approximate geodesic} if
\begin{align} \label{approxgeod}
\left( \ddot{\gg} - \brs{\N \dot{\gg}}^2_{\gg} \right) \gw_{\gg}^n = \ge
\Omega.
\end{align}
\end{defn}

\begin{lemma} \label{approxgeodlemma} (\cite{ChenSOKM} Lemma 7) Suppose $\gg_i :
[0,1] \to \HH, i = 1,2$ are two smooth curves.  There exists
$\ge_0$ small and a smooth two-parameter family of curves
\begin{align*}
\gg(t,s,\ge) : [0,1] \times [0,1] \times(0,\ge_0] \to \HH
\end{align*}
such that
\begin{enumerate} 
\item{For fixed $s_0$, the curve $\gg(t,s_0,\ge)$ is an $\ge$-approximate
geodesic from $\gg_1(s)$ to $\gg_2(s)$.}
\item{There exists $C$ depending on $\{\gg_i\}$ such that
\begin{align*}
\brs{\gg} + \brs{\frac{\del \gg}{\del s}} + \brs{\frac{\del \gg}{\del t}} < C,
\qquad 0 \leq \frac{\del^2 \gg}{\del t^2} < C, \qquad \frac{\del^2 \gg}{\del
s^2} < C.
\end{align*}}
\item{For fixed $s_0$, the curves $\{\gg(t,s_0, \ge)\}$ converge as $\ge \to 0$
to the unique geodesic connecting $\gg_1(s)$ to $\gg_2(s)$ in the weak $C^{1,1}$
topology.}
\item{There exists a constant $C$ depending on $\{\gg_i\}$ such that
\begin{align*}
\brs{\frac{\del E}{\del t}} \leq \ge C.
\end{align*}
That is, the energy element converges to a constant along each curve $\gg(t,s_0,
\ge)$ as $\ge \to 0$.}
\end{enumerate}
\end{lemma}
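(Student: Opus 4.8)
The plan is to recast the $\ge$-approximate geodesic equation (\ref{approxgeod}) as a non-degenerate complex Monge-Amp\`ere equation and to solve the associated Dirichlet problem. Following Semmes and Donaldson, I would complexify the time variable by setting $z = t + \sqrt{-1}\gt$ with $\gt \in \mathbb{R}/2\pi\mathbb{Z}$, so that a path $\gg_t \in \HH$ corresponds to an $S^1$-invariant function $\Phi$ on $M \times A$, where $A = [0,1] \times S^1$ is an annulus, via $\Phi(x,z) = \gg_{\Re z}(x)$. Writing $\pi : M \times A \to M$ for the projection onto the first factor, a direct computation of the Schur complement of the $(n+1) \times (n+1)$ complex Hessian shows that
\[
\left( \pi^* \gw + \sqrt{-1} \del \delb \Phi \right)^{n+1} = c_n \left( \ddot{\gg} - \brs{\N \dot{\gg}}^2_{\gg} \right) \gw_{\gg}^n \wedge \left( \sqrt{-1} \, dz \wedge d \bar{z} \right)
\]
for a dimensional constant $c_n > 0$, the $M$--$M$ block of the Hessian being exactly $\gw_{\gg}$ and the $z\bar{z}$ entry being $\tfrac{1}{4}\ddot{\gg}$. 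Hence (\ref{approxgeod}) is equivalent to $\left( \pi^* \gw + \sqrt{-1} \del \delb \Phi \right)^{n+1} = \ge\, \til{\Omega}$ on $M \times A$, where $\til{\Omega}$ is the fixed volume form assembled from $\Omega$ and $\sqrt{-1}\, dz \wedge d\bar{z}$, with Dirichlet data on the two boundary circles $\{\Re z = i\}$, $i = 0,1$, prescribed by $\gg_1(s)$ and $\gg_2(s)$.

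For fixed $\ge > 0$ this is a uniformly elliptic complex Monge-Amp\`ere equation over the degenerate background $\pi^* \gw$, and existence of a unique smooth solution $\Phi$ satisfying $\pi^* \gw + \sqrt{-1} \del \delb \Phi > 0$ follows from the Dirichlet theory of Caffarelli-Kohn-Nirenberg-Spruck and Yau, as developed in this setting by Chen \cite{ChenSOKM}. Smooth dependence on the parameter $s$, which produces the two-parameter family $\gg(t,s,\ge)$ and establishes item (1), follows from smooth dependence of the boundary data together with uniqueness and the implicit function theorem.

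Next I would establish the a priori estimates of item (2). The $C^0$ and gradient bounds follow from the comparison principle applied to explicit barriers, while the convexity $0 \leq \ddot{\gg}$ is immediate from the positivity of the $z\bar{z}$ entry of the Hessian of the solution. The essential content is the uniform second-order bound $\ddot{\gg} < C$ together with the full interior and boundary $C^2$ estimates, proved by applying the maximum principle to an auxiliary quantity built from $\tr_{\gw}\left( \sqrt{-1} \del \delb \Phi \right)$; the bound on $\del^2 \gg / \del s^2$ is then obtained by differentiating the equation in $s$ and applying the maximum principle once more. These second-order estimates, and especially their boundary versions, are the genuinely difficult technical core of the argument and are where I expect the main obstacle to lie.

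Granting the estimates, item (4) follows from a short computation: differentiating $E(t) = \int_M \dot{\gg}^2 \gw_{\gg}^n$ and integrating by parts using $\del_t \gw_{\gg}^n = n \sqrt{-1} \del \delb \dot{\gg} \wedge \gw_{\gg}^{n-1}$ gives
\[
\frac{\del E}{\del t} = 2 \int_M \dot{\gg} \left( \ddot{\gg} - \brs{\N \dot{\gg}}^2_{\gg} \right) \gw_{\gg}^n = 2 \ge \int_M \dot{\gg}\, \Omega,
\]
the last equality being (\ref{approxgeod}); since $\dot{\gg}$ is uniformly bounded by item (2) and $\Omega$ has finite mass, this yields $\brs{\del E / \del t} \leq \ge C$. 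Finally, for item (3), the uniform $C^{1,1}$ bounds of item (2) furnish weak-$*$ compactness of the family $\{\gg(t,s_0,\ge)\}_{\ge}$ in $C^{1,1}$; any weak-$*$ limit is a $C^{1,1}$ geodesic connecting $\gg_1(s_0)$ to $\gg_2(s_0)$, and since Chen's theory guarantees such a geodesic is unique, the entire family converges as $\ge \to 0$.
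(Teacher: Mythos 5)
The paper does not prove this lemma itself but quotes it from Chen (\cite{ChenSOKM} Lemma 7), and your proposal correctly reconstructs the argument of that reference: complexifying the time variable to turn (\ref{approxgeod}) into a non-degenerate complex Monge--Amp\`ere Dirichlet problem on $M \times A$, invoking the a priori $C^0$, gradient, and second-order estimates for that problem, deriving item (4) from the identity $\frac{\del E}{\del t} = 2\ge \int_M \dot{\gg}\, \Omega$, and obtaining item (3) from weak $C^{1,1}$ compactness plus uniqueness of the limiting geodesic. This is essentially the intended proof, with the genuinely hard content (the uniform interior and boundary second-order estimates) correctly identified and appropriately deferred to the cited Dirichlet theory.
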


This lemma in particular yields the existence of a $C^{1,1}$ geodesic connecting
any two points.  A final point required in using these geodesics to prove that
$(\HH, d)$ is a metric space is to bound their length from below, and we record
this estimate as we will
use it several times below.

\begin{lemma} \label{distanceestimate}  Given $\psi, \phi \in \HH$ with
$I(\psi) = I(\phi)
= 0$, then
\begin{align*}
d(\psi, \phi) \geq V^{-\frac{1}{2}} \max \left\{ \int_{\phi - \psi > 0} (\phi -
\psi ) \gw_{\phi}^n, - \int_{\phi - \psi < 0} (\phi - \psi) \gw_{\psi}^n
\right\}.
\end{align*}
\begin{proof} By (\cite{ChenSOKM} Corollary 3) it suffices to estimate the
length of the $C^{1,1}$ geodesic connecting $\phi$ to $\psi$.  This proof
follows (\cite{ChenSOKM} Proposition 2) but with a more
general base point.  Let $\til{\gg}(t) = t \phi + (1-t) \psi$, and let $a(t) =
I(\til{\gg}(t))$.  It follows from (\ref{Iformula}) that
\begin{align*}
\dot{a}(t) =&\ \int_M \left(\phi - \psi \right) \gw^n_{\til{\gg}(t)},\\
\ddot{a}(t) =&\ \int_M \left( \phi - \psi \right) \gD_{\til{\gg}(t)} \left(\phi
-
\psi \right) \gw_{\til{\gg}(t)}^n \leq 0.
\end{align*}
Hence $\dot{a}(0) \geq a(1) - a(0) \geq \dot{a}(1)$, thus
\begin{align} \label{deloc10}
\int_M \left( \phi - \psi \right) \gw^n_{\psi} \geq I(\phi) - I(\psi) \geq
\int_M \left( \phi - \psi \right) \gw^n_{\phi}
\end{align}
Since $I(\phi) = I(\psi) = 0$ we conclude that $\phi - \psi$ attains positive
and negative values.  Now let $\gg_t$ be an $\ge$-approximate geodesic
connecting $\psi$ to $\phi$.  Note that one has $\ddot{\gg}(t) > \frac{1}{2}
\brs{\N
\dot{\gg}}^2_{\gg} \geq 0$.  It follows that
\begin{align} \label{deloc20}
\dot{\gg}(0) \leq \phi - \psi \leq \dot{\gg}(1).
\end{align}
Let $E(t)$ denote the infinitesimal energy along $\gg(t)$.  It follows from
H\"older's inequality and (\ref{deloc20}) that
\begin{align*}
\sqrt{E(1)} \geq&\ V^{-\frac{1}{2}} \int_M \brs{\dot{\gg}(1)}\\
\geq&\ V^{-\frac{1}{2}} \int_{\dot{\gg}(1) > 0} \dot{\gg}(1) \gw_{\phi}^n\\
\geq&\ V^{-\frac{1}{2}} \int_{\phi - \psi > 0} (\phi - \psi) \gw_{\phi}^n.
\end{align*}
Similarly one can derive
\begin{align*}
\sqrt{E(0)} \geq - V^{-\frac{1}{2}} \int_{\phi - \psi < 0} (\phi - \psi)
\gw_{\psi}^n.
\end{align*}
Note that by Lemma \ref{approxgeodlemma} part (4) it follows that for all $t_1,
t_2 \in
[0,1]$ one has $\brs{E(t_1) - E(t_2)} \leq C \ge$.  Hence for all $t$,
\begin{align*}
\sqrt{E(t)} \geq V^{-\frac{1}{2}} \max \left\{ \int_{\phi - \psi > 0} (\phi -
\psi ) \gw_{\phi}^n, - \int_{\phi - \psi < 0} (\phi - \psi) \gw_{\psi}^n
\right\} - C \ge.
\end{align*}
Integrating this inequality from $0$ to $1$ and sending $\ge$ to zero yields the
result.
\end{proof}
\end{lemma}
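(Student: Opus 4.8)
The plan is to bound $d(\psi,\phi)$ from below by producing a uniform-in-$\ge$ lower bound for the length of an $\ge$-approximate geodesic joining $\psi$ to $\phi$, and then letting $\ge \to 0$. Since $d(\psi,\phi)$ is the infimum of lengths over all connecting paths, and since by (\cite{ChenSOKM} Corollary 3) the distance is realized by the $C^{1,1}$ geodesic to which the $\ge$-approximate geodesics converge (Lemma \ref{approxgeodlemma} (3)), it suffices to obtain a lower bound on the energy element $E(t)$ of an $\ge$-approximate geodesic that degrades to the stated quantity as $\ge \to 0$. The two ingredients I would assemble are, first, a sign/concavity analysis certifying that $\phi-\psi$ changes sign and pinning the pointwise velocity of the geodesic against $\phi-\psi$ at the endpoints; and second, a Hölder estimate converting the $L^1$ mass of $\phi-\psi$ on its positivity and negativity sets into a lower bound for $\sqrt{E}$.

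For the first ingredient I would analyze the straight-line path $\til{\gamma}(t) = t\phi + (1-t)\psi$ and the function $a(t) := I(\til{\gamma}(t))$. Using the variational formula (\ref{Iformula}) one computes $\dot a(t) = \int_M (\phi-\psi)\gw_{\til{\gamma}(t)}^n$ and, after one integration by parts, $\ddot a(t) = \int_M (\phi-\psi)\gD_{\til{\gamma}(t)}(\phi-\psi)\gw_{\til{\gamma}(t)}^n \leq 0$, so $a$ is concave. Because $I(\phi) = I(\psi) = 0$ the increment $a(1)-a(0)$ vanishes, and concavity then forces $\dot a(0) \geq 0 \geq \dot a(1)$; in particular $\phi-\psi$ takes both signs (otherwise one of these integrals against a positive measure would be strictly signed), so both integrals in the claimed maximum are meaningful. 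Next I would invoke the defining equation (\ref{approxgeod}): since $(\ddot{\gamma} - \brs{\N\dot{\gamma}}^2_{\gamma})\gw_{\gamma}^n = \ge\Omega > 0$ we have $\ddot{\gamma} > \frac{1}{2}\brs{\N\dot{\gamma}}^2_{\gamma} \geq 0$ pointwise, so $\dot{\gamma}(t)$ is increasing in $t$. Combined with $\int_0^1 \dot{\gamma}\,dt = \phi-\psi$ this yields the key pinching $\dot{\gamma}(0) \leq \phi-\psi \leq \dot{\gamma}(1)$.

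The second ingredient is then routine. By Hölder, $\sqrt{E(1)} \geq V^{-1/2}\int_M \brs{\dot{\gamma}(1)}\gw_{\phi}^n \geq V^{-1/2}\int_{\phi-\psi>0}(\phi-\psi)\gw_{\phi}^n$, where the last step uses $\dot{\gamma}(1) \geq \phi-\psi$ and restricts to the set where $\phi-\psi>0$ (on which $\dot{\gamma}(1)>0$); symmetrically $\sqrt{E(0)} \geq -V^{-1/2}\int_{\phi-\psi<0}(\phi-\psi)\gw_{\psi}^n$. To promote these endpoint bounds to a bound at every $t$ I would apply Lemma \ref{approxgeodlemma} (4), which gives $\brs{E(t_1)-E(t_2)} \leq C\ge$, so that $\sqrt{E(t)}$ exceeds the maximum of the two integrals up to an error $C\ge$ uniformly in $t$. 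Integrating in $t$ bounds $\LL(\gamma)$ below by the same quantity minus $C\ge$, and sending $\ge \to 0$ delivers the claim.

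The main obstacle is conceptual rather than computational: genuine geodesics in $\HH$ are only $C^{1,1}$, so one cannot directly differentiate twice or apply the exact geodesic equation. The role of the $\ge$-approximate geodesics is precisely to restore enough regularity to run the convexity estimate $\ddot{\gamma} > 0$ — which is what pins the endpoint velocities against $\phi-\psi$ — while Lemma \ref{approxgeodlemma} (4) controls the price paid in the $\ge \to 0$ limit. The delicate point is therefore to keep every estimate uniform in $\ge$ so that the final passage to the limit is legitimate; the concavity computation for $a(t)$ is a clean preliminary whose sole purpose is to guarantee that $\phi-\psi$ changes sign.
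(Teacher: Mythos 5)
Your proposal is correct and follows essentially the same route as the paper: the concavity of $a(t)=I(t\phi+(1-t)\psi)$ to force a sign change of $\phi-\psi$, the convexity $\ddot{\gg}>0$ of the $\ge$-approximate geodesic to pin $\dot{\gg}(0)\leq\phi-\psi\leq\dot{\gg}(1)$, H\"older at the endpoints, and Lemma \ref{approxgeodlemma}(4) to propagate the bound to all $t$ before sending $\ge\to 0$. The only addition is your explicit justification of the pinching via $\int_0^1\dot{\gg}\,dt=\phi-\psi$, which the paper leaves implicit.
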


\begin{thm} (\cite{ChenSOKM} Corollary 3, Theorem 6)
\begin{enumerate}
\item{The space of K\"ahler potentials $\HH$ is convex by $C^{1,1}$ geodesics.}
\item{ $(\HH, d)$ is a metric space.}
\end{enumerate}
\end{thm}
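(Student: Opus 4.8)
The plan is to treat the two assertions in turn, with the second resting on the distance lower bound of Lemma \ref{distanceestimate}.

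Part (1) is essentially immediate from Lemma \ref{approxgeodlemma}. Given $\phi, \psi \in \HH$, I would apply that lemma (e.g.\ to the constant families $\gg_1 \equiv \phi$, $\gg_2 \equiv \psi$) to produce a family of $\ge$-approximate geodesics from $\phi$ to $\psi$. The uniform bounds of part (2) furnish the compactness needed to pass to a limit as $\ge \to 0$, and part (3) identifies this weak $C^{1,1}$ limit as the geodesic joining $\phi$ to $\psi$. Hence any two potentials are joined by a $C^{1,1}$ geodesic.

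For part (2) I would first dispatch the elementary axioms. Finiteness of $d$ follows from convexity of $\HH$ as a set of functions: the linear path $t \mapsto t\psi + (1-t)\phi$ has $\gw_{t\psi + (1-t)\phi} = t\gw_\psi + (1-t)\gw_\phi > 0$, so it is a smooth path in $\HH$ of finite length. Nonnegativity is clear since $E \geq 0$; symmetry follows from reversing the parametrization, which preserves length; $d(\phi,\phi) = 0$ comes from the constant path; and the triangle inequality follows by concatenating near-minimizing paths (rounding the corner at the junction with arbitrarily small length cost). The one substantial point is nondegeneracy, $d(\phi,\psi) = 0 \Rightarrow \phi = \psi$, and this is where Lemma \ref{distanceestimate} is needed.

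To establish nondegeneracy I would argue in two steps. First, along any smooth path $\gg$ from $\phi$ to $\psi$ one has $\frac{d}{dt} I(\gg_t) = \int_M \frac{\del \gg}{\del t} \gw_{\gg_t}^n$, so Cauchy--Schwarz gives $\left| \frac{d}{dt} I(\gg_t) \right| \leq E(t)^{\frac{1}{2}} V^{\frac{1}{2}}$ and hence $\brs{I(\phi) - I(\psi)} \leq V^{\frac{1}{2}} \LL(\gg)$; taking the infimum over $\gg$ yields $\brs{I(\phi) - I(\psi)} \leq V^{\frac{1}{2}} d(\phi,\psi)$. Thus $d(\phi,\psi) = 0$ forces $I(\phi) = I(\psi) =: \gl$. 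Second, since $I(\phi - c) = I(\phi) - cV$, subtracting the common constant $\gl / V$ from both endpoints normalizes $I$ to zero; as subtracting a fixed constant along an entire path alters neither the velocity nor the metric $\gw_{\gg}$, it preserves length and leaves $d$ unchanged. Lemma \ref{distanceestimate} then applies to the normalized potentials and gives
\begin{align*}
0 = d(\psi - \tfrac{\gl}{V}, \phi - \tfrac{\gl}{V}) \geq V^{-\frac{1}{2}} \max \left\{ \int_{\phi - \psi > 0} (\phi - \psi) \gw_{\phi}^n, \ - \int_{\phi - \psi < 0} (\phi - \psi) \gw_{\psi}^n \right\}.
\end{align*}
Both terms in the maximum are nonnegative, hence both vanish; since each integrand is of one sign and $\gw_\phi^n, \gw_\psi^n$ are positive volume forms, the sets $\{\phi - \psi > 0\}$ and $\{\phi - \psi < 0\}$ have measure zero, so $\phi = \psi$ by continuity. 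I expect this nondegeneracy argument to be the crux: the elementary axioms and part (1) are routine, and the only delicate point is the reduction to the normalization $I = 0$, after which the genuinely deep input is the estimate of Lemma \ref{distanceestimate}.
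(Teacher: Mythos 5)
Your proposal is correct and follows exactly the route the paper indicates: the theorem is quoted from Chen (\cite{ChenSOKM}) without proof, but the surrounding text makes clear that part (1) is the content of Lemma \ref{approxgeodlemma} and that the only nontrivial metric axiom, nondegeneracy, is settled by the length lower bound of Lemma \ref{distanceestimate} after normalizing $I$ to zero --- precisely the two-step reduction you carry out. The only quibble is a harmless normalization mismatch (the factor of $V$ in $I$ differs between (\ref{Iformula2}) and (\ref{Iformula})), which affects your constants but not the argument.
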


Given that the geodesics  between points in $\HH$ pass through the closure of
$\HH$ with respect to weak $C^{1,1}$ convergence, it is convenient to work
directly with this space.  As we will also need to work with the metric space
completion of $\HH$, we take the time here to define notation for these two
spaces.  Note that in most of the literature $\bar{\HH}$ denotes the $C^{1,1}$
closure, whereas for us this denotes the metric space completion.
\begin{defn} \label{completiondef} Given $(M^{2n}, \gw, J)$ a compact K\"ahler
manifold, let
$\HH^{1,1}$ denote the closure of $\HH$ with respect to weak $C^{1,1}$
convergence.  Furthermore, let $(\bar{\HH}, \bar{d})$ denote the metric space
completion of $(\HH, d)$.  Note that weak $C^{1,1}$ convergence implies
strong $C^{1,\ga}$ convergence, which in turn by a Lemma \ref{toplemma} implies
convergence in the distance topology.  This means that one can interpret a point
in $\phi \in \HH^{1,1}$ as a point in $\bar{\HH}$ by mapping it to the
equivalence class of Cauchy sequences represented by any sequence converging to
$\phi$ in the weak $C^{1,1}$ topology.  We will do this implicitly in various
points in the paper.
\end{defn}

\begin{lemma} \label{toplemma} Let $\{\phi_n \} \in \HH$ be a sequence
converging in the weak $C^{1,1}$ topology.  Then $\{\phi_n\}$ is a Cauchy
sequence in $(\HH, d)$.
\begin{proof} Convergence in the weak $C^{1,1}$ topology implies uniform
convergence of the potential functions.  Fix
$m,n \in \mathbb N$ and set $\gg_{m,n}(t) = t \phi_n + (1-t) \phi_m$.  We
estimate
\begin{align*}
\EE(\gg_{n,m}) =&\ \int_0^1 \int_M \left( \phi_n - \phi_m \right)^2 \gw^n_{t
\phi_n + (1-t) \phi_m} dt\\
\leq&\ \brs{\phi_n - \phi_m}_{C^0}^2\int_0^1 \int_M \gw^n_{t
\phi_n + (1-t) \phi_m} dt\\
=&\ \brs{\phi_n - \phi_m}_{C^0}^2 V.
\end{align*}
Due to the uniform convergence in $C^0$, the lemma follows.
\end{proof}
\end{lemma}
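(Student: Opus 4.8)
The plan is to bound the distance $d(\phi_n,\phi_m)$ from above by the length of an explicit, easily controlled path, and then exploit the uniform convergence of the potentials. The natural candidate is the linear interpolation $\gg_{m,n}(t) = t\phi_n + (1-t)\phi_m$. The first thing I would check --- and the one structural point that makes the argument work --- is that this path actually stays inside $\HH$. This follows because $\gw_{\gg_{m,n}(t)} = t\gw_{\phi_n} + (1-t)\gw_{\phi_m}$ is a convex combination of the two positive $(1,1)$-forms $\gw_{\phi_n}$ and $\gw_{\phi_m}$, hence strictly positive for every $t \in [0,1]$. Thus $\gg_{m,n}$ is an admissible competitor in the infimum defining $d$, and in particular $d(\phi_n,\phi_m) \le \LL(\gg_{m,n})$.

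Next I would bound the length, or rather the energy, of this path. Since $\dot{\gg}_{m,n} = \phi_n - \phi_m$ is independent of $t$, the energy element is $E(t) = \int_M (\phi_n - \phi_m)^2 \gw_{\gg_{m,n}(t)}^n$, which I would estimate by pulling out the $C^0$ norm of $\phi_n - \phi_m$ and using that the total volume $\int_M \gw_\psi^n = V$ is independent of $\psi \in \HH$ (all the forms $\gw_\psi$ being cohomologous to $\gw$). This gives $\EE(\gg_{m,n}) \le V\,\brs{\phi_n-\phi_m}_{C^0}^2$. To pass from energy to distance I would invoke Cauchy--Schwarz in the form $\LL(\gg)^2 = \left(\int_0^1 E(t)^{1/2}\,dt\right)^2 \le \int_0^1 E(t)\,dt = \EE(\gg)$, so that $d(\phi_n,\phi_m) \le \LL(\gg_{m,n}) \le \EE(\gg_{m,n})^{1/2} \le V^{1/2}\,\brs{\phi_n-\phi_m}_{C^0}$.

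Finally, I would observe that weak $C^{1,1}$ convergence entails uniform convergence of the potential functions, so $\brs{\phi_n - \phi_m}_{C^0} \to 0$ as $m,n \to \infty$; combined with the displayed bound this shows that $\{\phi_n\}$ is Cauchy in $(\HH, d)$. I do not expect any genuine obstacle here. The only point requiring real care is the verification that the linear path remains in $\HH$, which hinges on convexity of the positivity condition, together with the (standard but essential) fact that the volume is a cohomological invariant, so that the metric along the path never degenerates or blows up and the energy integral is finite. Everything else is a routine application of Cauchy--Schwarz and of the definition of $d$ as an infimum of lengths.
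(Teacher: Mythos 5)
Your proposal is correct and follows essentially the same route as the paper: bound $d(\phi_n,\phi_m)$ by the energy of the linear interpolation path, estimate that energy by $V\,\brs{\phi_n-\phi_m}_{C^0}^2$, and conclude from uniform convergence of the potentials. The only difference is that you spell out the (true and worth noting) facts that the linear path stays in $\HH$ by convexity of the positivity condition and that the total volume is cohomologically invariant, which the paper uses implicitly.
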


Furthermore, as mentioned above in Lemma \ref{curvaturecalc}, the space $\HH$
formally has nonpositive curvature.  By again exploiting the theory of
approximate geodesics one can exhibit the nonpositivity of curvature in the
sense of Alexandrov, due to Calabi-Chen \cite{CalabiChen}.  We include a
discussion of the proof to make precise the sense in which it holds with respect
to points in $\bar{\HH}$.

\begin{lemma} \label{triangleineq} (\cite{CalabiChen} Theorem 1.1) Fix $a,b,c
\in \mathcal H$.  For any $s, 0 \leq s \leq 1$, let $p_{s} \in \bar{\HH}$
denote the
point on the geodesic path connecting $b$ to $c$ satisfying $d(b, p_{s}) = s
d(b, c)$ and $d(p_{s}, c) = (1 - s) d(b, c)$.  Then
\begin{align} \label{hyptri}
\bar{d}(a, p_{s})^2 \leq (1 - s) d(a,b)^2 + s d(a, c)^2 - s(1 - s) d(b,
c)^2.
\end{align}
\begin{proof} Let $\gg^{\ge} : [0,1] \to \HH$ denote the $\ge$-approximate
geodesic connecting $b$ to $c$, and let $\EE(s)$ denote the energy of the
$\ge$-approximate geodesic connecting $a$ to $\gg^{\ge}(s)$.
Since the curvature of $\HH$ is nonpositive, Calabi-Chen use Jacobi field
estimates to estimate the second derivative of $\EE(s)$ from below and obtain
the inequality (\cite{CalabiChen} pg. 185)
\begin{align} \label{triineq10}
 \EE(s) \leq (1-s) \EE(0) + s \EE(1) - s(1-s) \left(\EE(\gg^{\ge}) - C \ge
\right).
\end{align}
Fix $s > 0$.  By Lemma \ref{approxgeodlemma} we know that for any sequence
$\{\ge_i\} \to 0$, $\{\gg^{\ge_i}(s)\}$ is a Cauchy sequence in $\HH$, and
moreover $\lim_{i \to \infty} d(b,\gg^{\ge_i}(s)) = s d(b,c)$ and $\lim_{i \to
\infty} d(\gg^{\ge_i}(s),c) = (1-s) d(b,c)$.  In other words this Cauchy
sequence represents the point $p_{s} \in \bar{\HH}$ on the geodesic connecting
$b$ to $c$ Moreover, the energies $\EE(s)$ converge as $\ge \to 0$ to the
squared distance from $a$ to $\gg^{\ge}(s)$, so again by the definition of the
metric on $\bar{\HH}$, the left hand side of (\ref{triineq10}) converges as $\ge
\to 0$ to $\bar{d}(a,p_{s})$.  The lemma follows.
\end{proof}
\end{lemma}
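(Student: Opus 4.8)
The plan is to prove the comparison inequality (\ref{hyptri}) via the classical second variation/Jacobi field argument for nonpositively curved spaces, executed on Chen's smooth $\ge$-approximate geodesics of Lemma \ref{approxgeodlemma} rather than on the merely $C^{1,1}$ geodesics, and then to pass to the limit $\ge \to 0$. First I would fix the $\ge$-approximate geodesic $\gg^{\ge} : [0,1] \to \HH$ from $b$ to $c$ (parameter $s$), and for each $s$ join $a$ to $\gg^{\ge}(s)$ by an $\ge$-approximate geodesic, producing from Lemma \ref{approxgeodlemma} a smooth two-parameter family $\gg(t,s,\ge)$ with $a = \gg(0,s,\ge)$ and $\gg^{\ge}(s) = \gg(1,s,\ge)$. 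Let $\EE(s)$ be the energy of the curve $t \mapsto \gg(t,s,\ge)$. A direct computation shows that the target estimate (\ref{triineq10}) follows from the differential inequality $\EE''(s) \geq 2(\EE(\gg^{\ge}) - C\ge)$: if $g(s) := \EE(s) - (1-s)\EE(0) - s\EE(1) + s(1-s)(\EE(\gg^{\ge}) - C\ge)$, then $g(0) = g(1) = 0$ while $g'' = \EE'' - 2(\EE(\gg^{\ge}) - C\ge) \geq 0$, so $g$ is convex and hence nonpositive on $[0,1]$. The whole problem thus reduces to a lower bound on $\EE''(s)$.

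For that bound I would compute the second variation of energy. Writing $V = \frac{\del \gg}{\del s}$ and $T = \frac{\del \gg}{\del t}$, differentiating the first variation of $\EE$ (as in the manipulations of Lemma \ref{lengthvariations}, applied to $\EE$ in place of $\LL$) once more in $s$ and commuting $\frac{D}{\del s}$ past $\frac{D}{\del t}$ yields
\[
\tfrac{1}{2}\EE''(s) = \int_0^1 \left( \brs{\tfrac{D}{\del t} V}^2 - \IP{\IP{R(V,T)T, V}} \right) dt + O(\ge),
\]
where the $O(\ge)$ collects the moving-endpoint term at $t=1$ (controlled since $\ddot{\gg}^{\ge}$ is $O(\ge)$ for an approximate geodesic) and the error from $\frac{D}{\del t} T$ being $O(\ge)$ rather than zero; the derivative bounds in part (2) of Lemma \ref{approxgeodlemma} make all such errors uniform in $s$. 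The essential input is Lemma \ref{curvaturecalc}: nonpositivity of the sectional curvature gives $-\IP{\IP{R(V,T)T,V}} \geq 0$, so the integrand is a sum of nonnegative terms. Since $V(0) = 0$ and $V(1) = \dot{\gg}^{\ge}(s)$, and $V$ is an approximate Jacobi field, the nonpositive-curvature comparison — convexity of the norm $\brs{V(t)}$ along the geodesic — bounds the index form below by $\brs{V(1)}^2$. By part (4) of Lemma \ref{approxgeodlemma} the energy element is nearly constant, so $\brs{V(1)}^2 = \brs{\dot{\gg}^{\ge}(s)}^2 = \EE(\gg^{\ge}) + O(\ge)$, and we obtain $\EE''(s) \geq 2(\EE(\gg^{\ge}) - C\ge)$, hence (\ref{triineq10}).

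Finally I would let $\ge \to 0$. For fixed $s$, part (3) of Lemma \ref{approxgeodlemma} with Lemma \ref{toplemma} shows $\{\gg^{\ge_i}(s)\}$ is Cauchy in $(\HH, d)$ and, under the identification of Definition \ref{completiondef}, represents the point $p_s \in \bar{\HH}$ on the $b$-to-$c$ geodesic with $d(b,p_s) = s\, d(b,c)$ and $d(p_s,c) = (1-s)\, d(b,c)$ (the distance ratios again using near-constancy of the energy element). The same near-constancy gives $\EE(0) \to d(a,b)^2$, $\EE(1) \to d(a,c)^2$, $\EE(\gg^{\ge}) \to d(b,c)^2$, while by definition of $\bar{d}$ the left side $\EE(s) \to \bar{d}(a,p_s)^2$; passing to the limit in (\ref{triineq10}) yields (\ref{hyptri}).

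The main obstacle is the second paragraph: establishing the nonpositive-curvature lower bound on the index form and, more delicately, carrying the entire second-variation argument out for approximate rather than exact geodesics, so that every error is controlled by a single $O(\ge)$ uniformly in $s$. One must justify the second variation despite the limited regularity and check that the Jacobi field comparison, classical in finite-dimensional Riemannian geometry, survives in this infinite-dimensional setting using only the sign of the curvature from Lemma \ref{curvaturecalc} together with the a priori estimates of Lemma \ref{approxgeodlemma}; this is precisely the portion drawn from Calabi-Chen.
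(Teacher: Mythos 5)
Your proposal is correct and follows essentially the same route as the paper: the paper likewise works with the $\ge$-approximate geodesics of Lemma \ref{approxgeodlemma}, obtains (\ref{triineq10}) from the nonpositive-curvature second-variation/Jacobi-field estimate, and then passes to the limit $\ge \to 0$ exactly as you describe. The only difference is that the paper simply cites \cite{CalabiChen} for (\ref{triineq10}) rather than rederiving it, whereas you sketch that derivation---which is indeed the Calabi--Chen argument, as you yourself note.
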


Lastly in this subsection we record Chen's theorem on the decay of $K$-energy
with
distance which is
crucial to what follows.  We give a proof in the simple case of two points
connected by a smooth geodesic, to exhibit why the estimate takes the form it
does.

\begin{thm} \label{Kenergydecay} (\cite{ChenSOKM3} Theorem 1.2) Let $\phi_0,
\phi_1 \in \HH$.  Then
\begin{align*}
\nu(\phi_1) \geq \nu(\phi_0) - d(\phi_0, \phi_1) \sqrt{\mathcal C(\phi_0)}.
\end{align*}
\begin{proof} Fix $\phi_0,\phi_1 \in \HH$, and suppose $\phi : [0, 1] \to \HH$
is
a smooth geodesic connecting $\phi_0$ to $\phi_1$.  We note that
\begin{align*}
\left. \frac{\del \nu}{\del t} \right|_{t = 0} =&\ - \int_M \left(
s_{\phi} -
\bar{s} \right) \dot{\phi} \gw_{\phi}^{\wedge n} \geq - \sqrt{\mathcal
C(\phi_0)} \brs{\brs{\dot{\phi}}}_{\gw_{\phi}} = - \sqrt{\mathcal
C(\phi_0)} d(\phi_0,\phi_1)
\end{align*}
as the geodesic $\phi_t$ has constant speed.  But, by Lemma \ref{Kenergyconv} we
conclude that
for all $t \in [0, 1]$,
\begin{align*}
\frac{\del \nu}{\del t}(t) \geq&\ - \sqrt{\CC(\phi_0)} d(\phi_0,\phi_1).
\end{align*}
Integrating over $[0,1]$ it follows that
\begin{align*}
\nu(\phi_1) \geq \nu(\phi_0) - d(\phi_0,\phi_1) \sqrt{\CC(\phi_0)}.
\end{align*}
The
rigorous proof requires the theory of almost smooth geodesics of Chen-Tian
\cite{ChenTian} discussed in \S \ref{highreggeod}.
\end{proof}
\end{thm}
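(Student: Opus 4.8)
The plan is to reduce the statement to a differential inequality for $\nu$ along a path joining $\phi_0$ to $\phi_1$, exactly as in the sketch above, but to replace the (generally nonexistent) smooth geodesic by a family of $\ge$-approximate geodesics and then pass to the limit $\ge \to 0$. Fix $\phi_0, \phi_1 \in \HH$ and let $\gg^{\ge} : [0,1] \to \HH$ be the $\ge$-approximate geodesic from $\phi_0$ to $\phi_1$ furnished by Lemma \ref{approxgeodlemma}. Since $\gg^{\ge}$ is genuinely smooth, the first variation of $\nu$ from the definition (\ref{Kenergy}), together with Cauchy--Schwarz, gives at $t = 0$
\[
\left. \frac{\del \nu}{\del t}\right|_{t=0} = - \int_M (s_{\phi_0} - \bar{s}) \dot{\gg}^{\ge}(0)\, \gw_{\phi_0}^n \geq - \sqrt{\CC(\phi_0)}\, \nm{\dot{\gg}^{\ge}(0)}{L^2(\gw_{\phi_0})},
\]
where $\CC(\phi_0) = \int_M (s_{\phi_0} - \bar{s})^2 \gw_{\phi_0}^n$. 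The use of the fixed basepoint $\phi_0$ is what makes this bound depend only on the Calabi energy of $\phi_0$ and the initial speed, rather than on the curvature along the whole path.

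Next I would use the second variation formula of Lemma \ref{Kenergyconv} to propagate this lower bound across $[0,1]$. The first term $\frac{1}{V}\nm{\delb \N^{1,0}\dot{\gg}^{\ge}}{L^2}^2$ is nonnegative, while the second term, which vanishes along an exact geodesic by (\ref{geod}), is of size $O(\ge)$ along $\gg^{\ge}$ by virtue of the approximate geodesic equation (\ref{approxgeod}) and the uniform bounds of Lemma \ref{approxgeodlemma} part (2); this yields $\frac{d^2 \nu}{dt^2} \geq - C \ge$ on $[0,1]$ for a constant $C$ depending only on $\phi_0, \phi_1$. Integrating this differential inequality once gives $\frac{\del \nu}{\del t}(t) \geq \left.\frac{\del \nu}{\del t}\right|_{t=0} - C \ge$, and integrating again over $[0,1]$ produces
\[
\nu(\phi_1) - \nu(\phi_0) \geq - \sqrt{\CC(\phi_0)}\, \nm{\dot{\gg}^{\ge}(0)}{L^2(\gw_{\phi_0})} - C \ge.
\]
Finally, by Lemma \ref{approxgeodlemma} part (4) the energy element is constant up to $O(\ge)$ along $\gg^{\ge}$, so $\nm{\dot{\gg}^{\ge}(0)}{L^2(\gw_{\phi_0})} = \LL(\gg^{\ge}) + O(\ge)$, and by part (3) the lengths $\LL(\gg^{\ge})$ converge to the length of the minimizing $C^{1,1}$ geodesic, namely $d(\phi_0,\phi_1)$. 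Sending $\ge \to 0$ yields the claim.

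The hard part is the regularity: the honest limiting geodesic is only $C^{1,1}$, so neither the second variation formula nor the convexity it expresses is literally available on it, and the whole argument hinges on transferring estimates from the smooth $\ge$-approximations to this limit. Two technical points must be handled with care. First, one must verify that the deviation of $\gg^{\ge}$ from an exact geodesic really does enter the second variation as a uniformly $O(\ge)$ error; this requires the a priori bounds of Lemma \ref{approxgeodlemma} and control of $s_{\gw} - \bar{s}$ along the family. Second, the convergence of the initial speed $\nm{\dot{\gg}^{\ge}(0)}{L^2(\gw_{\phi_0})}$ to $d(\phi_0,\phi_1)$ rests on the near-constancy of the energy element together with the length convergence, both from Lemma \ref{approxgeodlemma}. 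The cleanest fully rigorous route, as indicated in the statement, is to run the same computation along the almost smooth geodesics of Chen--Tian \cite{ChenTian}, where the $K$-energy convexity is justified by integrating the second variation over the regular part; there the remaining obstacle is to show that the singular set contributes negligibly and that the boundary terms in the integration by parts vanish, which is precisely the content of the almost smooth theory discussed in \S \ref{highreggeod}.
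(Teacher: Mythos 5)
Your proposal follows the same route as the paper's own sketch: the first variation of $\nu$ at $t=0$ combined with Cauchy--Schwarz, propagation of that lower bound across $[0,1]$ via the convexity of $\nu$ from Lemma \ref{Kenergyconv}, and a final integration, with the genuine regularity work deferred to the almost smooth geodesics of Chen--Tian exactly as the paper does. The one caution is that your intermediate assertion $\tfrac{d^2\nu}{dt^2} \geq -C\ge$ along the $\ge$-approximate geodesics is not actually delivered by Lemma \ref{approxgeodlemma}, whose estimates are only of $C^{1,1}$ type in space and therefore do not give the uniform control of $s_{\gg^{\ge}(t)}$ (four spatial derivatives plus a positive lower bound on $\gw_{\gg^{\ge}(t)}$) needed to make the error term in the second variation formula $O(\ge)$ --- which is precisely the obstruction that forces the retreat to the Chen--Tian theory that you, like the paper, ultimately invoke.
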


\subsection{Variational properties}

We will require the variation of geodesic distance as well as a convexity
property for the distance function, which we record below.

\begin{lemma} \label{C1distance} The distance function is $C^1$.  Specifically,
let $\phi(s)$ denote a path in $\mathcal H$ and $L(s)$ denote the geodesic
distance from $0$ to $\phi(s)$,
then
\begin{align*}
\frac{d L}{d s} =&\ E(1,s)^{-\frac{1}{2}} \IP{\IP{\frac{d \phi}{ds}, \frac{\del
\gg(1,s)}{\del t}}} = \left[\int_M \frac{ \del \gg(1, s)}{\del t}
\frac{d \phi}{d s} dV_{s} \right] \left[ \int_M \brs{\frac{\del \gg(1,
s)}{\del t}}^2 dV_{s} \right]^{-\frac{1}{2}}
\end{align*}
where $\gg(t,s) : [0,1] \to \HH$ is the unique $C^{1,1}$ geodesic
from $0$ to $\phi(s)$.
\begin{proof} This is contained in (\cite{ChenSOKM} Theorem 6), but we include
the
proof since the derivative itself is not stated separately therein and moreover
we will use this calculation later.  With $\phi_1(s) \equiv 0$ for all $s$ and
$\phi_2(s) = \phi(s)$ the given path, let $\gg(t,s,\ge)$ denote the family of
approximate geodesics guaranteed by Lemma \ref{approxgeodlemma}.  Furthermore,
set
\begin{align} \label{approxlength}
\mathcal L(s, \ge) := \mathcal L(\gg(\cdot, s, \ge)).
\end{align}
As $\gg(t,s,\ge)$ is a smooth family, we may apply Lemma \ref{lengthvariations}
and the $\ge$-approximate geodesic equation to yield
\begin{align*}
\frac{d \mathcal L(s,\ge)}{d s} =&\ E(t,s,\ge)^{-\frac{1}{2}} \left.
\IP{\IP{\frac{\del \gg}{\del s}, \frac{\del \gg}{\del t}}} \right|_{t=0}^{t=1}\\
&\ \qquad + \int_0^1 E(t,s,\ge)^{-\frac{1}{2}} \IP{\IP{\frac{D}{\del t}
\frac{\del \gg}{\del t}, \frac{\IP{\IP{\frac{\del \gg}{\del s}, \frac{\del
\gg}{\del t}}}}{\IP{\IP{\frac{\del \gg}{\del t}, \frac{\del \gg}{\del t}}}}
\frac{\del \gg}{\del t} - \frac{\del \gg}{\del s}}} dt\\
=&\ E(1,s,\ge)^{-\frac{1}{2}} \IP{\IP{\frac{d \phi}{ds}, \frac{\del
\gg(1,s)}{\del t}}}\\
&\ \qquad + \ge \int_0^1 E(t,s,\ge)^{-\frac{1}{2}} \int_M \left(
\frac{\IP{\IP{\frac{\del \gg}{\del s}, \frac{\del \gg}{\del
t}}}}{\IP{\IP{\frac{\del \gg}{\del t}, \frac{\del \gg}{\del t}}}} \frac{\del
\gg}{\del t} - \frac{\del \gg}{\del s}\right) \Omega dt
\end{align*}
Integrating from $s_1$ to $s_2$ and dividing by $s_2 - s_1$ yields
\begin{align*}
& \brs{\frac{\mathcal L(s_2, \ge) - \mathcal L(s_1, \ge)}{s_2 - s_1} -
\frac{1}{s_2 - s_1} \int_{s_1}^{s_2}  E(s,1,\ge)^{-\frac{1}{2}} \IP{\IP{\frac{d
\phi}{ds}, \frac{\del \gg(1,s)}{\del t}}}}\\
& \qquad \leq\ \frac{\ge}{s_2 - s_1} \int_{s_1}^{s_2} \int_0^1
E(t,s,\ge)^{-\frac{1}{2}} \int_M \left( \frac{\IP{\IP{\frac{\del \gg}{\del s},
\frac{\del \gg}{\del t}}}}{\IP{\IP{\frac{\del \gg}{\del t}, \frac{\del \gg}{\del
t}}}} \frac{\del \gg}{\del t} - \frac{\del \gg}{\del s}\right) \Omega dt ds\\
& \qquad \leq\ C\ge.
\end{align*}
The final estimate follows from Lemma \ref{approxgeodlemma}.  In particular, as
the energy element $E$ approaches a constant it is in particular bounded below,
and the integrands and volume forms are uniformly bounded as well.  Taking the
limit as $\ge$ goes to zero yields
\begin{align*}
\lim_{s_2 \to s_2} \frac{\LL(s_2) - \LL(s_1)}{s_2 - s_1} =&\ \lim_{s_2 \to s_1}
\int_{s_1}^{s_2}  E(s,1)^{-\frac{1}{2}}
\IP{\IP{\frac{\del \gg(1,s)}{\del t}, \frac{d \phi}{ds}}} ds\\
=&\ E(1,s)^{-\frac{1}{2}} \IP{\IP{\frac{d \phi}{ds}, \frac{\del \gg(1,s)}{\del
t}}}\\
=&\ \int_M \frac{\del \gg(1,s)}{\del t} \frac{d \phi}{d s} dV_s \left[ \int_M
\brs{\frac{\del \gg(1,s)}{\del t}}^2 dV_s \right]^{-\frac{1}{2}}.
\end{align*}
\end{proof}
\end{lemma}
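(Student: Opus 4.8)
The plan is to differentiate the length functional along the path of endpoints by working entirely at the level of the smooth $\ge$-approximate geodesics supplied by Lemma~\ref{approxgeodlemma}, and to pass to the limit $\ge \to 0$ only at the very end. Fix the base point $\gg_1(s) \equiv 0$ and the variable endpoint $\gg_2(s) = \phi(s)$, and let $\gg(t,s,\ge)$ denote the associated smooth two-parameter family of $\ge$-approximate geodesics, with approximate length $\LL(s,\ge) := \LL(\gg(\cdot,s,\ge))$. Since this family is genuinely smooth, one may apply the first variation formula of Lemma~\ref{lengthvariations} directly, thereby deferring the difficulty that the limiting geodesics are merely $C^{1,1}$.

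First I would evaluate the boundary term. Because $\gg(0,s,\ge) = 0$ for all $s$, one has $\frac{\del \gg}{\del s}(0,s,\ge) = 0$ and the endpoint contribution at $t = 0$ vanishes; at $t = 1$, since $\gg(1,s,\ge) = \phi(s)$, one has $\frac{\del \gg}{\del s}(1,s,\ge) = \frac{d\phi}{ds}$, producing the term $E(1,s,\ge)^{-\frac{1}{2}} \IP{\IP{\frac{d\phi}{ds}, \frac{\del \gg(1,s)}{\del t}}}$. Next I would treat the interior integral of Lemma~\ref{lengthvariations}, whose integrand pairs the covariant acceleration $\frac{D}{\del t}\frac{\del \gg}{\del t}$ against a fixed interior vector field in the Riemannian metric $\IP{\IP{\cdot,\cdot}}_{\gg} = \int_M (\cdot)(\cdot)\,\gw_{\gg}^n$. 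Using the $\ge$-approximate geodesic equation~(\ref{approxgeod}), this covariant acceleration is of order $\ge$ relative to the volume form, so the Riemannian pairing becomes $\ge$ times an integration against the fixed background form $\Omega$. The uniform first-derivative bounds of Lemma~\ref{approxgeodlemma}(2), together with the fact (Lemma~\ref{approxgeodlemma}(4)) that the energy element $E$ is nearly constant and hence uniformly bounded below away from zero, render the interior vector field uniformly bounded and bound the entire interior term by $C\ge$.

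With these two estimates in hand, $\frac{d \LL(s,\ge)}{ds}$ equals the boundary term up to an error of size $C\ge$. I would then integrate in $s$ over $[s_1, s_2]$, divide by $s_2 - s_1$, and send $\ge \to 0$: by Lemma~\ref{approxgeodlemma}(3), together with the identification of the distance with the length of the limiting $C^{1,1}$ geodesic, $\LL(s,\ge) \to L(s)$, while the uniform estimates justify passing the limit inside the integral and show that $E(1,s,\ge)^{-\frac{1}{2}}$ and $\frac{\del \gg(1,s,\ge)}{\del t}$ converge to their $C^{1,1}$ counterparts. This yields the stated difference-quotient identity for $L$, and letting $s_2 \to s_1$ gives the asserted formula for $\frac{dL}{ds}$.

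Finally, to upgrade this to $C^1$ regularity I would note that the right-hand side $E(1,s)^{-\frac{1}{2}} \IP{\IP{\frac{d\phi}{ds}, \frac{\del \gg(1,s)}{\del t}}}$ depends continuously on $s$: the endpoint velocity $\frac{\del \gg(1,s)}{\del t}$ of the $C^{1,1}$ geodesic varies continuously with $s$ by the continuous dependence of these geodesics on their endpoints, and this single quantity controls both $E(1,s)$ and the inner product. I expect the main obstacle to be exactly the interchange of the limit $\ge \to 0$ with differentiation in $s$: since the limiting geodesic is only $C^{1,1}$ one cannot differentiate it in $s$ directly, so the whole argument must be carried out at the smooth approximate level with every error controlled uniformly in both $\ge$ and $s$. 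The crucial ingredient keeping the factors $E^{-\frac{1}{2}}$ harmless throughout is the uniform lower bound on the energy element from Lemma~\ref{approxgeodlemma}(4).
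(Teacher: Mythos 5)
Your proposal is correct and follows essentially the same route as the paper: apply the first variation formula of Lemma \ref{lengthvariations} to the smooth family of $\ge$-approximate geodesics, observe that the boundary term at $t=0$ vanishes and at $t=1$ produces the stated expression, convert the interior term to an $O(\ge)$ error via the approximate geodesic equation and the uniform bounds of Lemma \ref{approxgeodlemma}, then integrate in $s$, send $\ge \to 0$, and take the difference quotient limit. Your closing remark on the continuity of the derivative in $s$ is a point the paper leaves implicit, but the argument is otherwise the same.
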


\subsection{Higher regularity of geodesics} \label{highreggeod}

In this subsection we recall the improved regularity theory of \cite{ChenTian}. 
We begin by
recalling the interpretation of the geodesic equation in terms of the
homogeneous complex Mong\'e-Ampere (HCMA) equation.

\begin{lemma} Let $\phi : [0,1] \to \HH$ be a continuous path.  Extend this to a
function $\phi : [0,1] \times S^1 \to \HH$ via $\phi(t,\theta,x) = \phi(t)(x)$. 
Then $\phi$ is a geodesic if and only if
\begin{align} \label{HCMA}
\left( \pi_2^* \gw + \sqrt{-1} \del \delb \phi \right)^{n-1} = 0 \quad \mbox{ on
} \Sigma \times M,
\end{align}
where $\Sigma = [0,1] \times S^1$ and $\pi_i$ are the natural projection
operators to $\Sigma$ and $M$.
\end{lemma}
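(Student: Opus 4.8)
The plan is to recognize that, on the $(n+1)$-dimensional complex manifold $\Sigma \times M$, equation (\ref{HCMA}) asserts the degeneracy of the closed real $(1,1)$-form $\til{\gw} := \pi_2^* \gw + \sqrt{-1}\del\delb\phi$, namely the vanishing of its top exterior power $\til{\gw}^{\,n+1}$ (here $n+1 = \dim_{\mathbb{C}}(\Sigma \times M)$), and to show by a direct pointwise computation that this vanishing is equivalent to the geodesic equation (\ref{geod}). I would carry this out first for a smooth path $\phi$, where (\ref{HCMA}) holds in the classical sense, and then address the general continuous path by a weak (current) interpretation.

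First I would fix a holomorphic coordinate $w = t + \sqrt{-1}\gt$ on $\Sigma$ together with local holomorphic coordinates $z^1, \dots, z^n$ on $M$. The key simplification is that $\phi$ is independent of $\gt$, so that
\begin{align*}
\del_w \phi = \del_{\bar{w}} \phi = \tfrac{1}{2} \dot{\phi}, \qquad \del_w \del_{\bar{w}} \phi = \tfrac14 \ddot{\phi}, \qquad \del_w \del_{\bj} \phi = \tfrac12 \del_{\bj} \dot{\phi}, \qquad \del_i \del_{\bar{w}} \phi = \tfrac12 \del_i \dot{\phi}.
\end{align*}
Since $\pi_2^* \gw$ has components only in the $M$-directions, the Hermitian matrix $h$ of $\til{\gw}$ in the frame $(w, z^1, \dots, z^n)$ is
\begin{align*}
h = \begin{pmatrix} \frac14 \ddot{\phi} & \frac12 \del_{\bj}\dot{\phi} \\ \frac12 \del_i \dot{\phi} & (g_\phi)_{i \bj} \end{pmatrix},
\end{align*}
where $(g_\phi)_{i\bj} = (g_\gw)_{i\bj} + \del_i \del_{\bj}\phi$ is the metric of $\gw_\phi$ on $M$.

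The heart of the argument is the elementary identity $\til{\gw}^{\,n+1} = c_n \det(h)\, \mu$, with $c_n \neq 0$ a fixed constant and $\mu$ the Euclidean volume form of the chosen coordinates, so that $\til{\gw}^{\,n+1}$ vanishes at a point exactly when $\det(h)$ does. I would evaluate $\det(h)$ by the Schur complement with respect to the block $(g_\phi)_{i\bj}$, which is invertible because $\phi(t) \in \HH$ forces $\gw_\phi > 0$:
\begin{align*}
\det(h) = \det(g_\phi) \left( \tfrac14 \ddot{\phi} - \tfrac14 g_\phi^{i\bj} \del_i \dot{\phi}\, \del_{\bj}\dot{\phi} \right) = \tfrac14 \det(g_\phi) \left( \ddot{\phi} - \tfrac12 \brs{\N \dot{\phi}}^2_\phi \right),
\end{align*}
the last equality using that the real gradient norm is twice the Hermitian $\del$-norm $g_\phi^{i\bj}\del_i\dot\phi\,\del_{\bj}\dot\phi$ measured in $\gw_\phi$. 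As $\det(g_\phi) > 0$, the vanishing of $\til{\gw}^{\,n+1}$ is equivalent to $\ddot{\phi} = \tfrac12 \brs{\N\dot{\phi}}^2_\phi$, which is precisely (\ref{geod}). This settles the smooth case.

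The computation above is routine linear algebra, so the step I expect to require the most care is the regularity bookkeeping for a merely continuous path, where the second and mixed derivatives need not exist classically. The hard part will be to make sense of (\ref{HCMA}): one interprets it as the statement that the nonnegative Monge--Amp\`ere current $\til{\gw}^{\,n+1}$ (a Bedford--Taylor measure, defined for bounded $\til{\gw}$-plurisubharmonic $\phi$, i.e. subgeodesics) vanishes, and one must show this weak equation characterizes geodesics in the metric sense of \S \ref{approxgeodss}. I would handle this by approximating $\phi$ with the $\ge$-approximate geodesics of Lemma \ref{approxgeodlemma}, which solve (\ref{approxgeod}), and passing the determinant identity to the weak $C^{1,1}$ limit as $\ge \to 0$, using that $\gw_\phi$ stays uniformly positive along the approximation.
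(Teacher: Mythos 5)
The paper states this lemma without proof, as a recollection of the standard Semmes--Donaldson reformulation of the geodesic equation, so there is no internal argument to compare against; judged on its own terms, your computation is the standard derivation and is correct. Two remarks. First, you have (rightly) proved the statement with the exponent $n+1$: on the $(n+1)$-dimensional complex manifold $\Sigma \times M$ the relevant condition is the vanishing of the top power of $\pi_2^*\gw + \sqrt{-1}\del\delb\phi$, and the paper's displayed exponent $n-1$ must be a typo, since the restriction of this form to each slice $\{z\}\times M$ is $\gw_{\phi}>0$ and hence its $(n-1)$-st power cannot vanish. Your Schur-complement evaluation of the determinant is correct, and the factor bookkeeping ($\frac12\brs{\N\dot\phi}^2_{\phi} = g_{\phi}^{i\bj}\del_i\dot\phi\,\del_{\bj}\dot\phi$) matches the convention implicit in (\ref{geod}) and in the connection formula (\ref{dirdef}). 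Second, the smooth case is complete, but the reduction of the merely continuous case to it is only sketched: for a continuous path the left side of (\ref{HCMA}) must be read as a Bedford--Taylor current, and the equivalence between vanishing of that measure and being a metric-space geodesic in $(\HH,d)$ is genuinely the content of Chen's existence and uniqueness theory for the $C^{1,1}$ solution of the boundary value problem (together with Lemma \ref{approxgeodlemma}), not something that follows from the pointwise determinant identity alone. Since the paper itself leans on exactly those cited results, your appeal to the $\ge$-approximate geodesics is the right move, but as written it is an outline rather than a proof of the weak case.
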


Chen's fundamental existence theorem for $C^{1,1}$ geodesics in $\HH$ is
actually more general, and applies to more general solutions to (\ref{HCMA}).

\begin{thm} (\cite{ChenSOKM} \S 3) For a smooth map $\phi_0 : \del \Sigma \to
\HH$,
there exists a unique $C^{1,1}$ solution $\phi$ of (\ref{HCMA}) such that $\phi
= \phi_0$ on $\del \Sigma$ and $\phi(z,\cdot) \in {\HH^{1,1}}$ for each $z \in
\Sigma$.
\end{thm}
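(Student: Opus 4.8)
The plan is to recast (\ref{HCMA}) as the Dirichlet problem for a degenerate complex Monge-Amp\`ere equation on the product $\Sigma \times M$, which has complex dimension $n+1$, and to produce the solution as a limit of non-degenerate problems. Writing $\Omega_{\phi} := \pi_2^* \gw + \sqrt{-1} \del \delb \phi$, equation (\ref{HCMA}) is the statement that the top exterior power $\Omega_{\phi}^{\wedge(n+1)}$ vanishes. The difficulty is that $\pi_2^* \gw$ is only semipositive, being pulled back from the $n$-dimensional factor $M$, so the associated linearized operator degenerates in the $\Sigma$-directions; this degeneracy is precisely what obstructs $C^{\infty}$ regularity and forces us to work in $\HH^{1,1}$. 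To regularize, for $\ge > 0$ I would replace the vanishing right hand side by $\ge$ times a fixed smooth volume form on $\Sigma \times M$, obtaining a genuinely non-degenerate equation $\Omega_{\phi}^{\wedge(n+1)} = \ge\, \Omega$. When the boundary data is $S^1$-invariant, so that one is in the geodesic setting, this perturbed equation reduces under the ansatz $\phi(t,\theta,x) = \phi(t)(x)$ to the $\ge$-approximate geodesic equation (\ref{approxgeod}); the general Dirichlet problem is treated in the same way.

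The first step is to solve the non-degenerate problem for each fixed $\ge > 0$ by the continuity method. I would interpolate the boundary data and right hand side between a trivial reference solution and the desired one, and show the set of solvable parameters is both open and closed. Openness is immediate from the implicit function theorem, since the linearization of the non-degenerate operator is an invertible uniformly elliptic operator under the Dirichlet condition. Closedness rests on a priori estimates up to order $C^{2, \ga}$ for fixed $\ge$, after which Schauder theory and elliptic bootstrapping yield a smooth solution $\phi^{\ge}$. This part is classical complex Monge-Amp\`ere Dirichlet theory in the spirit of Yau and Caffarelli-Kohn-Nirenberg-Spruck.

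The heart of the matter, and the main obstacle, is to establish a priori estimates that are uniform in $\ge$, so that the regularity survives the degenerate limit. The $C^0$ bound follows from the comparison principle against explicit sub- and supersolutions built from the boundary data. The $C^1$ bound splits into a boundary gradient estimate obtained from barriers and an interior estimate obtained by differentiating the equation and applying the maximum principle. The critical and most delicate estimate is the uniform $C^{1,1}$ bound, i.e.\ a bound on $\sqrt{-1} \del \delb \phi^{\ge}$ independent of $\ge$. As usual for fully nonlinear equations this reduces, via the maximum principle applied to the largest eigenvalue of the complex Hessian, to a boundary second-derivative estimate, and within that the double-normal derivative estimate along $\del \Sigma \times M$ is the genuinely hard point; controlling it uniformly in $\ge$, without introducing a dependence that blows up as $\ge \to 0$, is exactly the technical core of the argument.

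Granting the uniform $C^{1,1}$ estimate, I would complete the proof as follows. The family $\{\phi^{\ge}\}$ is bounded in $C^{1,1}$ uniformly in $\ge$, so a subsequence converges in the weak $C^{1,1}$ topology to a limit $\phi$ with $\sqrt{-1}\del\delb\phi \in L^{\infty}$ and $\phi(z, \cdot) \in \HH^{1,1}$ for each $z$. Passing to the limit in the equation shows that $\Omega_{\phi}^{\wedge(n+1)} = 0$ in the appropriate weak sense, so $\phi$ solves (\ref{HCMA}) with the prescribed boundary values. Finally, uniqueness follows from the comparison principle for the Monge-Amp\`ere operator: given two $C^{1,1}$ solutions agreeing on $\del \Sigma$, perturbing one by a small multiple of a strictly plurisubharmonic defining function produces a strict supersolution, and comparison against the non-degenerate operator, followed by letting the perturbation tend to zero, forces the two solutions to coincide.
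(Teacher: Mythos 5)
Your proposal follows essentially the same route as the cited proof of Chen: regularize (\ref{HCMA}) to the non-degenerate equation $\Omega_{\phi}^{\wedge(n+1)} = \ge\,\Omega$ (which in the $S^1$-invariant case is exactly the $\ge$-approximate geodesic equation (\ref{approxgeod}) that the paper relies on throughout), solve by the continuity method, prove a $C^{1,1}$ (i.e.\ bounded $\sqrt{-1}\del\delb\phi$) estimate uniform in $\ge$, pass to a weak $C^{1,1}$ limit, and obtain uniqueness from the comparison principle. The outline is correct and correctly identifies the uniform boundary second-derivative estimate as the technical core, so no further comment is needed.
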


\begin{defn} Suppose $\phi$ is a $C^{1,1}$ solution of (\ref{HCMA}).  The
\emph{regular part} of $\phi$ is
\begin{align*}
\mathcal R_{\phi} = \left\{(z,x) \in \Sigma \times M\ |\ \exists\ U, (z,x) \in
U, \left. \phi \right|_{U} \in C^{\infty}, \mbox{ and } \left. \gw_{\phi}
\right|_{\{z\} \times M} > 0 \right\}.
\end{align*}
Inside $\mathcal R_{\phi}$ we can define a distribution
\begin{align*}
\left. \mathcal D_{\phi} \right|_{(z,x)} = \{ v \in T_z \Sigma \times T_x M\ |\
i_v \left( \pi_2^* \gw + \sqrt{-1} \del \delb \phi \right) = 0 \}.
\end{align*}
Note that since $\gw$ is closed $\mathcal D_{\phi}$ is integrable.  Given a
subset $\VV \subset \Sigma \times M$, we say that $\RR_{\phi}$ is
\emph{saturated} in $\VV$ if every maximal integral submanifold of $\DD_{\phi}$
in $\RR_{\phi} \cap \VV$ is a disk and closed in the subspace topology of $\VV$.
 Note that since $\gw_{\phi} > 0$ in $\RR_{\phi}$, $\DD_{\phi} \cap
T^{1,0}\Sigma \times T^{1,0} M$ is one dimensional, and there is a projection of
a unit length generator of this space onto $T^{1,0}M$.  We call this vector
field $X$.
\end{defn}

\begin{defn} A solution $\phi$ of (\ref{HCMA}) is \emph{partially smooth} if it
satisfies the following conditions:
\begin{enumerate}
\item{It has a uniform $C^{1,1}$ bound on $\Sigma \times M$ and $\RR_{\phi}$ is
saturated in $\Sigma \times M$.}
\item{$\RR_{\phi} \cap (\del \Sigma \times M)$ is open and dense in $\del \Sigma
\times M$.}
\item{The volume form $\gw^n_{\phi}$ can be extended to $\ohat{\Sigma} \times M$
as a continuous $(n,n)$ form, where $\ohat{\Sigma} = \Sigma \backslash \del
\Sigma$.}
\end{enumerate}
\end{defn}

\begin{thm} (\cite{ChenTian} Theorem 1.3.2) Suppose $\Sigma$ is a unit disc. 
Given $\phi_0 : \del \Sigma \to
\HH$ a smooth map, there exists a unique partially smooth solution to
(\ref{HCMA}).
\end{thm}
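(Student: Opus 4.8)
The plan is to take the unique $C^{1,1}$ solution $\phi$ of (\ref{HCMA}) supplied by the preceding theorem and to upgrade its regularity to partial smoothness by analyzing the foliation by holomorphic discs attached to it. Uniqueness is the easy half and is essentially free: a partially smooth solution is in particular a $C^{1,1}$ solution of (\ref{HCMA}) agreeing with $\phi_0$ on $\del \Sigma$, hence must coincide with the unique $C^{1,1}$ solution. So the entire content is to verify that this distinguished $C^{1,1}$ solution satisfies conditions (1)--(3) in the definition of partial smoothness.

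\emph{Geometric reformulation.} Set $\Omega_{\phi} := \pi_2^* \gw + \sqrt{-1} \del \delb \phi$, the closed nonnegative $(1,1)$ form appearing in (\ref{HCMA}). On the regular set $\RR_{\phi}$ its kernel distribution $\DD_{\phi}$ is, as already noted, integrable with complex one-dimensional $(1,0)$ part, so its leaves are holomorphic curves in $\Sigma \times M$. Since $\gw_{\phi} > 0$ along each slice $\{z\} \times M$ over $\RR_{\phi}$ and $\Sigma$ is a disc, $\pi_1$ restricts on each leaf to a holomorphic map onto $\Sigma$, exhibiting each leaf as a holomorphic disc whose boundary lies over $\del \Sigma$ in the submanifold determined by the smooth data $\phi_0$. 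The disc hypothesis enters precisely here, guaranteeing that the leaves are genuine discs with no monodromy. The strategy is then to reconstruct $\phi$ from this family of discs and to read off its regularity from theirs.

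\emph{Construction and interior regularity.} I would study the moduli of such discs by linearizing the $\delb$-equation with the totally real boundary conditions coming from $\phi_0$. The linearized operator is Fredholm; granting its surjectivity at the discs in question, the implicit function theorem produces a smooth finite-dimensional family of discs, and boundary elliptic regularity, using the smoothness of $\phi_0$, makes each disc smooth up to its boundary. As the attaching data vary, these smooth discs sweep out an open set on which $\phi$ is smooth with $\gw_{\phi}$ slicewise positive, i.e.\ an open subset of $\RR_{\phi}$ on which $\phi$ agrees with the foliation-reconstructed potential; the continuity of the reconstructed form then yields the continuous extension of $\gw_{\phi}^n$ to $\ohat{\Sigma} \times M$ required in (3). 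Showing that these discs fill a dense set gives the density half of (1), and anchoring the construction at boundary points gives the openness and density in $\del \Sigma \times M$ of condition (2).

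\emph{Main obstacle.} The difficulty is confining the singular set $\Sigma \times M \setminus \RR_{\phi}$ to a closed, nowhere-dense set and proving saturation, that is, that each maximal leaf is a closed embedded disc in the subspace topology. This requires ruling out the degeneration phenomena of holomorphic discs: interior and boundary bubbling, loss of embeddedness through non-transverse leaf intersections, and the slicewise loss of positivity of $\gw_{\phi}$. The essential analytic inputs are the uniform $C^{1,1}$ bound from Chen's theorem, a Gromov-type compactness theorem together with a monotonicity and removable-singularity analysis for the discs that prevents energy concentration, and uniform control of the foliation up to $\del \Sigma$. Establishing saturation and the continuous extension of the volume form all the way to the boundary are the most delicate points, since they demand that the disc family behave uniformly as its boundary approaches the anchoring graphs over $\del \Sigma$.
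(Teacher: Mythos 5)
This statement is not proved in the paper at all: it is quoted verbatim from Chen--Tian (\cite{ChenTian} Theorem 1.3.2) and used as a black box, so there is no internal proof to compare your attempt against. Judged on its own terms, your sketch correctly identifies the strategy of the cited work --- the title of \cite{ChenTian} is literally ``foliations by holomorphic discs'' --- and your uniqueness reduction (a partially smooth solution has a uniform $C^{1,1}$ bound and the prescribed boundary values, hence coincides with Chen's unique $C^{1,1}$ solution) is essentially right. But as an existence proof the proposal is a program rather than an argument, and it has one genuine structural gap.

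The gap is the clause ``granting its surjectivity at the discs in question.'' For the Riemann--Hilbert problem attached to these discs, surjectivity of the linearized $\delb$-operator with totally real boundary condition is governed by the partial indices of the boundary data, and their nonnegativity (``super-regularity'') is precisely what can fail at an arbitrary smooth $\phi_0$. It is not a technical hypothesis one may grant; it is the central difficulty. This is visible in the paper itself: Theorem \ref{almostsmoothexthm} (Chen--Tian Theorem 1.3.4) only produces an \emph{almost smooth} solution after an $\ge$-perturbation of the boundary data, exactly because the better-behaved disc moduli are available only for generic boundary values. The actual route to Theorem 1.3.2 runs in the opposite direction from your sketch: one first constructs the well-controlled foliation for perturbed boundary data $\phi_\ge$, establishes uniform estimates (including the a priori $C^{1,1}$ bound and the continuity of the volume form) independent of $\ge$, and then passes to the limit $\ge \to 0$ to obtain the partially smooth solution for the original $\phi_0$. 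A direct implicit-function-theorem argument at $\phi_0$, as you propose, would break down whenever the linearization is not surjective there. The remaining deferred items --- Gromov compactness, exclusion of bubbling, saturation of $\RR_{\phi}$, and uniform behavior of the discs up to $\del\Sigma$ --- are correctly flagged as the delicate points, but they constitute most of the hundred-page content of \cite{ChenTian} and cannot be absorbed into a paragraph of intentions.
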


\begin{defn} A solution $\phi$ of (\ref{HCMA}) is \emph{almost smooth} if it
satisfies the following conditions:
\begin{enumerate}
\item{$\phi$ is partially smooth}
\item{$\DD_{\phi}$ extends to a continuous distribution in an open dense
saturated set $\til{\VV} \subset \Sigma \times M$ such that $\til{\SS}_{\phi} :=
\Sigma \times M \backslash \til{V}$ has the Whitney extension property}
\item{The leaf vector field $X$ is uniformly bounded in $\til{\VV}$.}
\end{enumerate}
\end{defn}

\begin{rmk} The set $\til{\mathcal S}_{\phi}$ is the \emph{singular part} of
$\phi$, and in general $\left( \Sigma \times M \backslash \RR_{\phi} \right)
\backslash \til{\SS}_{\phi} \neq \emptyset$.
\end{rmk}

\begin{thm} (\cite{ChenTian} Theorem 1.3.4) \label{almostsmoothexthm} Suppose
$\Sigma$ is a unit disc.  Given
$\phi_0 : \del \Sigma \to \HH, \phi_0 \in C^{k,\ga}$, $k \geq 2, 0 < \ga < 1$,
and given $\ge > 0$, there exists $\phi_{\ge} : \del \Sigma \to \HH$ and an
almost smooth solution to (\ref{HCMA}) with boundary value $\phi_{\ge}$ such
that
\begin{align*}
\nm{\phi_0 - \phi_{\ge}}{C^{k,\ga}(\del \Sigma \times M)} < \ge.
\end{align*}
\end{thm}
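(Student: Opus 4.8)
The plan is to produce the almost smooth solution as the potential of a foliation of $\Sigma \times M$ by holomorphic discs, exploiting the correspondence between the HCMA equation (\ref{HCMA}) and such foliations. The starting point is Chen's $C^{1,1}$ existence theorem, which already yields a solution $\phi$ with the prescribed boundary value together with the uniform $C^{1,1}$ bound required in the definition of partial smoothness. On the regular set $\RR_{\phi}$ the form $\gw_{\phi} = \pi_2^* \gw + \sqrt{-1}\,\del \delb \phi$ is a smooth degenerate $(1,1)$-form whose kernel distribution $\DD_{\phi}$ is integrable, and whose intersection with the holomorphic tangent directions is one dimensional and generated by the leaf field $X$; the maximal integral leaves, suitably lifted, are holomorphic discs in $\Sigma \times M$ with boundaries lying over $\del \Sigma$. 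Thus the analytic content of the theorem is to produce, after an arbitrarily small perturbation of the boundary data, a solution whose leaves foliate an open dense saturated set in a controlled manner.

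First I would set up the moduli space $\mathcal M$ of these discs. The boundary map $\phi_0 : \del \Sigma \to \HH$ determines, through $\gw_{\phi_0}$, a totally real boundary condition to which the disc boundaries are attached, and one studies $\mathcal M$ as the zero locus of a $\delb$-type operator on an appropriate Banach manifold of maps. The key computation is the Fredholm index: it should equal $\dim_{\mathbb R} M = 2n$, so that a neighborhood of a regular disc sweeps out locally an open set of $\Sigma \times M$. This is exactly the dimension count that makes the discs foliate rather than overdetermine or underdetermine the ambient space.

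Next I would perturb to achieve transversality. By a Sard--Smale argument applied to the universal moduli problem, for a generic boundary perturbation $\phi_{\ge}$ with $\nm{\phi_0 - \phi_{\ge}}{C^{k,\ga}(\del \Sigma \times M)} < \ge$ the linearized operator is surjective at every disc, so $\mathcal M$ is a smooth manifold on which the discs vary smoothly; since this transversality is a generic (open dense) condition, an arbitrarily small such perturbation exists. The resulting discs foliate an open dense set $\til \VV$, and defining $\phi_{\ge}$ to be the potential determined by this foliation yields a solution that is smooth on $\RR_{\phi_{\ge}}$, along which $\DD_{\phi_{\ge}}$ extends continuously and $X$ is uniformly bounded by the uniform disc estimates. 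The complementary singular set $\til \SS_{\phi_{\ge}}$, where discs lose transversality or degenerate, is cut out by the degeneracy locus of the moduli problem and can be arranged to carry the Whitney extension property; together with the continuous extendability of $\gw_{\phi_{\ge}}^n$ to $\ohat{\Sigma} \times M$, this verifies all three conditions of partial smoothness and the two additional conditions of almost smoothness.

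The main obstacle is the compactness and transversality of the disc moduli space. Controlling bubbling and boundary degeneration of holomorphic discs subject to totally real boundary conditions, via a Gromov-type compactness adapted to this boundary value problem, and simultaneously arranging the genericity perturbation so that the linearization is surjective while the singular set $\til \SS_{\phi_{\ge}}$ stays small enough to possess the Whitney extension property, is the heart of the argument and is where essentially all of the real analytic work resides.
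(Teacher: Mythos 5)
The paper does not actually prove this statement: it is imported verbatim from Chen--Tian (\cite{ChenTian}, Theorem 1.3.4) and used as a black box (its only role here is to supply the almost smooth approximating geodesics in the proof of Proposition \ref{convexityprop}), so there is no internal proof to compare against. Your sketch does correctly identify the strategy of the cited source: interpreting solutions of (\ref{HCMA}) via foliations of $\Sigma \times M$ by holomorphic discs attached to a totally real boundary condition constructed from the boundary data, computing a Fredholm index so that regular discs sweep out open sets, and running a Sard--Smale genericity argument to perturb the boundary value until the discs become (super-)regular.

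As a proof, however, the proposal has genuine gaps: every step that carries the analytic weight is named rather than established. Concretely: (i) the Fredholm setup requires identifying the correct totally real submanifold of an auxiliary twistor-type space built from $\gw$ and the boundary map (not merely ``a boundary condition determined by $\gw_{\phi_0}$''), verifying the index count, and showing the evaluation map at boundary marked points is a local diffeomorphism for super-regular discs; (ii) compactness is asserted via ``Gromov-type compactness,'' but the crucial point is to \emph{rule out} bubbling and boundary degeneration using area bounds and a priori interior and boundary estimates for the discs --- without this the openness, density and saturation claims for $\RR_{\phi}$ all fail; (iii) the assertions that the singular set ``can be arranged'' to have the Whitney extension property and that $\gw_{\phi}^n$ extends continuously to $\ohat{\Sigma} \times M$ are precisely the hardest capacity and measure estimates in Chen--Tian and do not follow from genericity alone; (iv) the uniform bound on the leaf vector field $X$ requires its own separate estimate. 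So your outline is a faithful reconstruction of the shape of the known argument but not a proof; for the purposes of this paper the correct move is simply to cite \cite{ChenTian}, as the author does.
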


One crucial application of this theory of partially/almost smooth geodesics is
to establish the convexity of the $K$-energy in a more general setting.

\begin{thm} \label{wkKenergyconv} (\cite{ChenTian} Corollary 6.1.2) Suppose
$\phi : \Sigma \to \HH^{1,1}$ is a partially smooth solution to (\ref{HCMA}). 
Then the induced $K$-energy function $\nu : \Sigma \to \mathbb R$ is a
bounded weakly sub-harmonic function in $\Sigma$.
\end{thm}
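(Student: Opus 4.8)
The plan is to reduce the statement to two independent assertions—that $\nu$ is bounded, and that it is weakly subharmonic—and to establish subharmonicity first on the regular locus by a direct Hessian computation before promoting it across the singular set. Boundedness is the easier half: since $\phi$ is partially smooth it carries a uniform $C^{1,1}$ bound on $\Sigma \times M$, so the volume forms $\gw_{\phi(z,\cdot)}^n$ have densities relative to $\gw^n$ that are uniformly bounded above and away from zero. The explicit formula (\ref{Kenergform}) writes $\nu$ as the entropy $\int_M \log(\gw_\phi^n / \gw^n)\,\gw_\phi^n$ plus the lower order functionals $J(\phi)$ and $\bar s\, I(\phi)$, each of which is continuous under weak $C^{1,1}$ convergence of the potential. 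Hence $z \mapsto \nu(z)$ is a bounded, continuous function on $\Sigma$.

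Next, on the open set where $\phi$ is smooth in all variables and $\gw_\phi > 0$, I would compute the complex Hessian $\sqrt{-1} \del_z \delb_z \nu$ directly. Writing $z$ for the coordinate on $\Sigma$, the first variation gives $\del_z \nu = - \int_M (s_\phi - \bar s)(\del_z \phi)\, \gw_\phi^n$, and differentiating once more in $\bar{z}$ produces the complexified analog of the second variation formula of Lemma \ref{Kenergyconv}. The crucial point is that the HCMA equation (\ref{HCMA}) is precisely the statement that the complexified geodesic-curvature term—the analog of $\ddot{\phi} - \tfrac{1}{2}\brs{\N \dot{\phi}}^2$ appearing in that formula—vanishes on the regular part. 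This annihilates the indefinite term and leaves, up to a positive constant,
\[
\sqrt{-1}\, \del_z \delb_z \nu = \frac{1}{V} \left( \int_M \brs{\delb \N^{1,0} (\del_z \phi)}^2 \gw_\phi^n \right) \sqrt{-1}\, dz \wedge d \bar{z} \geq 0,
\]
so that $\nu$ is smoothly subharmonic wherever $\phi$ is smooth and nondegenerate.

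The main obstacle is upgrading this pointwise inequality on the regular locus to a global distributional subharmonicity statement across the singular set $\til{\SS}_\phi$. Here I would invoke the structural hypotheses built into the definition of partial smoothness: $\RR_\phi$ is saturated and $\RR_\phi \cap (\del \Sigma \times M)$ is open and dense, while the extended distribution $\DD_\phi$ lives on an open dense saturated set with the Whitney extension property and a uniformly bounded leaf vector field $X$. The strategy is to test $\nu$ against an arbitrary nonnegative $\chi \in C_c^\infty(\Sigma)$ and show $\int_\Sigma \nu\, \gD \chi \geq 0$; one integrates over the regular locus, where the computation above supplies the sign, and controls the contribution of a shrinking neighborhood of the singular set. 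The key analytic input is that the singular part projects to a set of capacity zero in $\Sigma$, which combined with the continuity of $\nu$ already established renders the singular locus removable for subharmonicity. It is this removable-singularity step, rather than the Hessian computation, where the fine regularity theory of Chen-Tian (Theorem \ref{almostsmoothexthm} and its attendant structure) is genuinely required.
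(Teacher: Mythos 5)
First, a point of comparison: the paper offers no proof of this statement. It is quoted verbatim from Chen--Tian (Corollary 6.1.2) and explicitly flagged as ``a deep, technical result which lies at the heart of Chen--Tian's proof of uniqueness of cscK metrics.'' Your proposal therefore has to stand on its own, and it has a genuine gap exactly at the step you yourself identify as the crux. Your removable-singularity strategy presumes that the singular behaviour is confined to a capacity-zero subset of the base $\Sigma$, so that $\nu$ is smoothly subharmonic on an open dense subset of $\Sigma$ and one only has to remove a thin exceptional set in two real dimensions. But partial smoothness only gives that $\RR_{\phi}$ is open, dense and saturated in $\Sigma \times M$; its complement can project \emph{onto} all of $\Sigma$. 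For every fixed $z$ the fibre $\{z\} \times M$ may meet the singular set, so $\nu(z) = \nu(\phi(z,\cdot))$ is a fibre integral that already mixes regular and degenerate contributions --- there need not be any open subset of $\Sigma$ on which your clean Hessian identity holds pointwise. The real difficulty, and the reason Chen--Tian's argument is long, is to justify the fibrewise integration by parts and the positivity of the surviving term when $\gw_{\phi}|_{\{z\}\times M}$ degenerates on part of each fibre; this is a statement about the foliation inside $\Sigma \times M$, not a removable-singularity statement in $\Sigma$. Relatedly, you invoke the extended distribution $\DD_{\phi}$, the Whitney extension property and the bounded leaf vector field $X$, but those are the defining properties of an \emph{almost smooth} solution; the theorem is asserted for merely partially smooth solutions, for which none of that structure is available.

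Two smaller remarks. A uniform $C^{1,1}$ bound gives an upper bound on $\gw_{\phi}^n/\gw^n$ but no positive lower bound --- the volume form is allowed to degenerate off $\RR_{\phi}$ --- so your claim that the density is ``bounded away from zero'' is false; boundedness of the entropy term nevertheless holds because $x \mapsto x \log x$ is bounded on $[0,A]$, so the conclusion of that step survives for a different reason. On the positive side, your identification of the regular-locus Hessian with the complexified second variation formula of Lemma \ref{Kenergyconv}, with the HCMA equation annihilating the indefinite term, is the correct formal heart of the matter; it is the passage from that formal computation to a distributional inequality across the degeneracy locus that your argument does not supply.
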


\noindent This is a deep, technical result which lies at the heart of
Chen-Tian's proof of uniqueness of cscK metrics.  As will become clear in \S
\ref{ltesec} it is crucial to the proof of Theorem \ref{fundexistthm} as well.

\section{Definition of minimizing movements} \label{WKSLNS}

In this section we give the setup for constructing minimizing movement
solutions of $K$-energy.  To begin with we precisely define the functional for
which we are constructing minimizing movements.

\begin{defn} \label{lscextdefn} Let $(X, d)$ be a metric space and $f : X \to
\mathbb R$ a lower
semicontinuous function.  If $(\bar{X}, \bar{d})$ denotes the completion of $(X,
d)$, we define the \emph{lower semicontinuous extension of $f$} by
\begin{align} \label{lscext}
\bar{f}(x) := \begin{cases}
f(x) & x \in X\\
\liminf_{x_n \to x, \{x_n\} \in X} f(x_n) & x \in \bar{X} \backslash X.
\end{cases}
\end{align}
A simple lemma (see Lemma \ref{lscextlsc}
below) shows that $\bar{f}$ is indeed lower semicontinuous.
\end{defn}

\begin{defn} Let $(M^{2n}, \gw, J)$ be a compact K\"ahler manifold.  Recall from
Definition \ref{completiondef} that $\bar{\HH}$ denotes the metric space
completion of $(\HH, d)$.  We set
\begin{align*}
\bar{\nu} : \bar{\HH} \to \mathbb R
\end{align*}
to be the lower semicontinuous extension of $\nu : \HH \to \mathbb R$ in the
sense of Definition \ref{lscextdefn}.
\end{defn}

\begin{rmk} A subtle point in this definition is that, while $\nu$ is
well-defined for $\phi \in \HH^{1,1}$ by Lemma \ref{Kenergform}, it does not
necessarily hold that $\nu(\phi) = \bar{\nu}(\phi)$ for all $\phi \in
\HH^{1,1}$.  The reason for defining $\bar{\nu}$ as the extension of $\nu$ as
defined on $\HH$, instead of $\HH^{1,1}$, is that it is not clear that $\nu$ is
lower semicontinuous on $\HH^{1,1}$.  Indeed, lower semicontinuity for $\nu$ on
$\HH$ follows from Theorem \ref{Kenergydecay}, where the Calabi energy of a
point in $\HH$ controls the rate of increase of $\nu$ approaching that point. 
Certainly one cannot pass this estimate in a naive way to all points in
$\HH^{1,1}$.
\end{rmk}

\begin{defn} \label{discdef} Let $(M^{2n}, \gw, J)$ be a compact K\"ahler
manifold.  Fix ${\phi}
\in \bar{\HH}$ and $\tau > 0$.  Let
\begin{align} \label{Fdef}
\mathcal F_{{\phi}, \tau} ({\psi}) =&\ \frac{\bar{d}^2({\phi}, {\psi})}{2 \tau}
+
\bar{\nu}({\psi}).
\end{align}
Furthermore, set
\begin{align} \label{mudef}
\mu_{\phi,\tau} := \inf_{\psi \in \HH} \FF_{\phi,\tau}(\psi)
\end{align}
The quantity $\mu$ is sometimes referred to as a \emph{Moreau-Yosida
approximation} of the given functional, in this case $\nu$.  Finally, we define
the \emph{resolvent operator}
\begin{align*}
W_{\tau} : \bar{\HH} \to \bar{\HH}
\end{align*}
by the property
\begin{align*}
\FF_{\phi,\tau}(W_{\tau}(\phi)) = \mu_{\phi,\tau}.
\end{align*}
The fact that there exists a unique minimizer for $\FF_{\phi,\tau}$ and so the
map $W_{\tau}$ is well defined will be shown later.
\end{defn}

\noindent Using the resolvent operator we can define our notion of a
discrete Calabi flow.

\begin{defn} \label{wkCF} Let $(M^{2n}, \gw, J)$ be a compact K\"ahler manifold.
 Given $T >
0$, consider a partition of $[0, T)$,
\begin{align*}
0 = t_0 < t_1 < \dots < t_m = T, \quad \tau_i = t_i - t_{i-1}.
\end{align*}
We say that
a sequence $\{\phi_i\}_{i = 0}^m \in \bar{\HH}$ is a \emph{discrete Calabi
flow} with initial condition $\phi_0$ if for all $0 \leq
i \leq m - 1$, $\phi_{i+1} = W_{\tau_i}(\phi_i)$.  We say that the solution has
a \emph{uniform step size $\tau$} if $\tau_i =\tau$
for all $i$.  Associated to any discrete Calabi flow is a one-parameter family
$\phi : [0,T] \to \bar{\HH}$ where $\phi_{|[t_i, t_{i+1})} = \phi_i$.
\end{defn}

\begin{defn} \label{GMM} Let $(M^{2n}, \gw, J)$ be a compact K\"ahler manifold. 
We say that
a curve $\phi : [0,T] \to \bar{\HH}$ is a \emph{$K$-energy minimizing
movement} with initial condition $\phi_0$ if there exists a
sequence of partitions $\{t_i^j\}_{i = 0}^{p_j}$ with associated discrete Calabi
flows $\{\phi_i^j\}_{i=0}^{p_j}$ with initial condition $\phi_0$ as in
Definition \ref{wkCF} such that
\begin{enumerate}
\item{$\lim_{j \to \infty} \sup_i \brs{\tau_i^j} = 0$},
\item{$\forall\ t \in [0, T], \phi^j(t) \to \phi(t)$},
\end{enumerate}
where the convergence above is in the distance topology.
\end{defn}

\begin{rmk} This definition allows for arbitrary step sizes, although the
solutions we construct are convergent limits of discrete Calabi flows with
uniform step sizes (see Theorem \ref{Mayer}).
\end{rmk}

\noindent We proceed to derive some basic properties for $\mathcal F$.  First we
derive the first variation of $\mathcal F$ at smooth points, and compute a
further characterization of its
critical points.  To do this we need a preliminary lemma.

\begin{lemma} \label{scform} Let $(M^{2n}, \gw, J)$ be a compact K\"ahler
manifold.  Fix $\phi
\in \mathcal H$ and $\psi \in C^{\infty}(M)$.  Then
\begin{align*}
\int_M \psi (s_{\phi}  - \bar{s}) \gw_{\phi}^n =&\ \int_M \sqrt{-1} \del \delb
\psi
\wedge \left( - \log \frac{\gw_{\phi}^n}{\gw^n}
\gw_{\phi}^{n-1} \right) + \psi \left( \rho(\gw) - \bar{s} \gw_{\phi}
\right)\wedge \gw_{\phi}^{n-1}.
\end{align*}
\begin{proof} We directly compute
\begin{align*}
\int_M \psi (s_{\phi} - \bar{s}) \gw_{\phi}^n =&\ \int_M \psi \left(
\rho(\gw_{\phi}) - \bar{s} \gw_{\phi} \right) \wedge \gw_{\phi}^{n-1}\\
=&\ \int_M \psi \left(\rho(\gw) - \frac{\sqrt{-1}}{2}
\del
\delb \log \frac{\gw_{\phi}^{n}}{\gw^n} - \bar{s} \gw_{\phi} \right) \wedge
\gw_{\phi}^{n-1}\\
=&\ \int_M \sqrt{-1} \del \delb \psi
\wedge \left( - \log \frac{\gw_{\phi}^n}{\gw^n}
\gw_{\phi}^{n-1} \right) + \psi \left( \rho(\gw) - \bar{s} \gw_{\phi}
\right)\wedge \gw_{\phi}^{n-1}.
\end{align*}
\end{proof}
\end{lemma}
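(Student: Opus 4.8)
The plan is to reduce the identity to a direct computation using two standard facts of K\"ahler geometry together with one integration by parts; there is no serious difficulty here, the content being entirely in the bookkeeping. First I would rewrite the left-hand side purely in terms of the Ricci form. By the definition of scalar curvature as the trace of the Ricci form against $\gw_{\phi}$ (in the normalization fixed by the paper's functional conventions) one has the pointwise identity $s_{\phi}\gw_{\phi}^n = \rho(\gw_{\phi})\wedge\gw_{\phi}^{n-1}$, and trivially $\bar{s}\gw_{\phi}^n = \bar{s}\,\gw_{\phi}\wedge\gw_{\phi}^{n-1}$. Thus
\begin{align*}
\int_M \psi(s_{\phi}-\bar{s})\gw_{\phi}^n = \int_M \psi\left(\rho(\gw_{\phi}) - \bar{s}\gw_{\phi}\right)\wedge\gw_{\phi}^{n-1}.
\end{align*}
This already produces the term $\psi(\rho(\gw)-\bar{s}\gw_{\phi})\wedge\gw_{\phi}^{n-1}$ appearing on the right, so all that remains is to account for the discrepancy $\rho(\gw_{\phi})-\rho(\gw)$.

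The second ingredient is the transgression formula for the Ricci form. Locally $\rho = -\frac{\sqrt{-1}}{2}\del\delb\log\det g$, and since the ratio $\gw_{\phi}^n/\gw^n = \det g_{\phi}/\det g$ is a globally defined positive function, the locally defined $\log\det$ terms cancel and one obtains the global identity
\begin{align*}
\rho(\gw_{\phi}) = \rho(\gw) - \frac{\sqrt{-1}}{2}\del\delb\log\frac{\gw_{\phi}^n}{\gw^n}.
\end{align*}
Substituting this into the previous display separates off the desired piece $\psi(\rho(\gw)-\bar{s}\gw_{\phi})\wedge\gw_{\phi}^{n-1}$ and leaves a single remaining term proportional to $\int_M \psi\,\del\delb\log\frac{\gw_{\phi}^n}{\gw^n}\wedge\gw_{\phi}^{n-1}$.

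The last step is to integrate by parts in this remaining term. Since $M$ is closed and $\gw_{\phi}$ is K\"ahler, the form $\gw_{\phi}^{n-1}$ is $d$-closed, so the operator $\sqrt{-1}\del\delb$ is self-adjoint when integrated against it, and I can move both derivatives off of $\log(\gw_{\phi}^n/\gw^n)$ and onto $\psi$ with no boundary contribution. This turns the remaining term into $\int_M \sqrt{-1}\del\delb\psi\wedge\left(-\log\frac{\gw_{\phi}^n}{\gw^n}\,\gw_{\phi}^{n-1}\right)$, which is exactly the first term on the right-hand side of the claimed formula; collecting the pieces completes the proof. The only points requiring care are fixing the normalization constants so that the factor in the transgression formula and those generated by the integration by parts combine correctly, and confirming that $d\gw_{\phi}^{n-1}=0$ so that no curvature correction appears. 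Both are routine, so the lemma presents no real obstacle.
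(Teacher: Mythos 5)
Your proposal is correct and follows exactly the same route as the paper's proof: rewrite $s_{\phi}\gw_{\phi}^n$ via the Ricci form, apply the transgression identity $\rho(\gw_{\phi}) = \rho(\gw) - \tfrac{\sqrt{-1}}{2}\del\delb\log\tfrac{\gw_{\phi}^n}{\gw^n}$, and integrate by parts against the closed form $\gw_{\phi}^{n-1}$. The normalization bookkeeping you flag is indeed the only delicate point (the paper itself is cavalier about the factor of $\tfrac{1}{2}$ between its second and third lines), but the argument is the same.
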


\begin{lemma} \label{Ffirstvarlemma} Let $(M^{2n}, \gw, J)$ be a compact
K\"ahler
manifold.  Fix $\phi
\in \HH$ and $\psi_s \in \mathcal H$ a one-parameter family of functions, $s \in
(-\ge, \ge)$.  Then
\begin{align} \label{Ffirstvar}
\left. \frac{d}{ds} \mathcal F_{\phi,\tau}(\psi_s) \right|_{s=0} =&\ \IP{\IP{
\frac{1}{\tau} \frac{\del \gg(1,0)}{\del t} - s_{\psi} + \bar{s}, \frac{d
\psi}{d s}}}
\end{align}
where $\gg : [0,1] \to \HH$ is the unique $C^{1,1}$ geodesic connecting $\phi$
to $\psi_0$.
Furthermore, $\phi \in \HH$ is a critical point for $\FF_{\phi,
\tau}$ if and only if for all $\eta \in C^{\infty}(M)$,
\begin{align} \label{discreteEuler}
0 =&\ \int_M \left[ \eta \left( \frac{1}{\tau} \frac{\del \gg(1,0)}{\del t} +
\bar{s} \right) \gw_{\phi} - \eta \rho(\gw) + \log \frac{\gw_{\phi}^n}{\gw^n}
\sqrt{-1} \del \delb \eta \right] \wedge \gw_{\phi}^{n-1}.
\end{align}
\begin{proof} Using Lemma \ref{lengthvariations} we compute the variation
\begin{align*}
\left. \frac{d}{ds} \frac{d^2(\phi,\psi_s)}{2 \tau} \right|_{s=0} =&\
\frac{d(\phi, \psi)}{\tau E(1,0)^{\frac{1}{2}}} \IP{\IP{ \frac{\del
\gg(1,0)}{\del t}, \frac{d \psi}{d s}}}.
\end{align*}
But since $\gg$ has constant speed, in particular we have $E(1,0)^{\frac{1}{2}}
= d(\phi,\psi)$.  Combining this with the definition of $K$-energy and Lemma
\ref{scform} yields the result.
\end{proof}
\end{lemma}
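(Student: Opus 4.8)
The plan is to split $\FF_{\phi,\tau}(\psi)=\frac{\bar{d}^2(\phi,\psi)}{2\tau}+\bar{\nu}(\psi)$ into its two summands and differentiate each along the family $\psi_s$. Because we vary only among $\psi_s\in\HH$, where $\bar{\nu}=\nu$ by the first case of the lower semicontinuous extension and $\nu$ is given by the smooth formula (\ref{Kenergy}), both terms are genuinely differentiable in $s$ and may be computed classically.

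For the distance term I would not differentiate the length integral from scratch, since the connecting geodesic is only $C^{1,1}$ and so Lemma \ref{lengthvariations} does not apply directly; instead I invoke Lemma \ref{C1distance}, which already packages exactly this computation by passing through the $\ge$-approximate geodesics (stated there with left endpoint $0$, but the derivation is insensitive to the choice of fixed left endpoint, so it applies with endpoint $\phi$). Writing $L(s)=d(\phi,\psi_s)$, the chain rule gives $\frac{d}{ds}\frac{L(s)^2}{2\tau}=\frac{L(s)}{\tau}\frac{dL}{ds}$, and Lemma \ref{C1distance} evaluates $\frac{dL}{ds}=E(1,s)^{-\frac{1}{2}}\IP{\IP{\frac{d\psi}{ds},\frac{\del\gg(1,s)}{\del t}}}$. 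At $s=0$ the geodesic $\gg$ has constant speed, so $E(1,0)^{\frac{1}{2}}=L(0)=d(\phi,\psi_0)$, whence the factors $L(0)$ and $E(1,0)^{-\frac{1}{2}}$ cancel and only $\frac{1}{\tau}\IP{\IP{\frac{\del\gg(1,0)}{\del t},\frac{d\psi}{ds}}}$ survives.

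For the $K$-energy term, the definition (\ref{Kenergy}) is manufactured precisely so that the integrand it accumulates along paths is the differential of $\nu$; hence $\frac{d}{ds}\nu(\psi_s)|_{s=0}=-\int_M(s_{\psi}-\bar{s})\frac{d\psi}{ds}\gw_{\psi}^n=\IP{\IP{-s_{\psi}+\bar{s},\frac{d\psi}{ds}}}$. Adding the two contributions gives (\ref{Ffirstvar}). For the Euler--Lagrange characterization, a critical point is a $\psi_0$ at which (\ref{Ffirstvar}) vanishes for every variation; taking $\frac{d\psi}{ds}=\eta$ arbitrary this reads $\int_M\big(\frac{1}{\tau}\frac{\del\gg(1,0)}{\del t}-s_{\psi}+\bar{s}\big)\eta\,\gw_{\psi}^n=0$. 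I would then feed the scalar-curvature piece $\int_M\eta(s_{\psi}-\bar{s})\gw_{\psi}^n$ into Lemma \ref{scform}, which rewrites it (moving two derivatives onto $\eta$) as $\int_M\big(\sqrt{-1}\del\delb\eta\wedge(-\log\frac{\gw_{\psi}^n}{\gw^n}\gw_{\psi}^{n-1})+\eta(\rho(\gw)-\bar{s}\gw_{\psi})\wedge\gw_{\psi}^{n-1}\big)$; substituting and regrouping the remaining terms over the common factor $\gw_{\psi}^{n-1}$ yields (\ref{discreteEuler}), with the critical point playing the role of $\phi$ in that display.

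The main obstacle is the differentiability of the squared distance: this is genuinely delicate because the connecting geodesics are only $C^{1,1}$ and $\bar{\HH}$ is incomplete, so the length functional cannot be differentiated naively. The substantive analytic content, however, is already isolated in Lemma \ref{C1distance}, whose proof approximates by $\ge$-geodesics and controls the error terms uniformly in $\ge$; granting that result, the remaining work here is the elementary chain-rule and constant-speed bookkeeping for the distance term, the standard first-variation identity for $\nu$, and the integration by parts supplied by Lemma \ref{scform}.
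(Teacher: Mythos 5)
Your proposal is correct and follows essentially the same route as the paper: differentiate the squared-distance term via the first variation of length (cancelling $E(1,0)^{1/2}$ against $d(\phi,\psi_0)$ by constant speed), add the standard first variation of $\nu$, and integrate by parts via Lemma \ref{scform} for the Euler--Lagrange form. Your one refinement --- routing the distance differentiation through Lemma \ref{C1distance} and its $\ge$-approximate geodesics rather than citing Lemma \ref{lengthvariations} directly on the merely $C^{1,1}$ geodesic --- is a more careful justification of the same computation, since the paper's direct appeal to Lemma \ref{lengthvariations} implicitly relies on exactly that approximation argument.
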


\begin{lemma} Let $(M^{2n}, \gw, J)$ be a compact K\"ahler manifold, and fix
$\phi \in \HH$, $\tau > 0$.  If $\psi$ is a minimizer for $\FF_{\phi,\tau}$,
then $I(\psi) = I(\phi)$.
\begin{proof} Without loss of generality we assume $\phi = 0$, the general case
being analogous.  Let $\psi_s = \psi + s$. Certainly $\psi_s$ is a geodesic. 
Moreover, it is clear by construction that $\nu(\psi_s) = \nu(\psi)$
for all $s$.  Since $\psi$ realizes the minimum for $\FF_{\phi,\tau}$, it thus
follows that $\psi$ realizes the minimum for $d(\phi, \psi_s)$ in the variable
$s$.  Let $\gg$ denote the unique $C^{1,1}$ geodesic connecting $0$ to $\phi$. 
By Lemma \ref{lengthvariations}, varying through the curve $\psi_s$, it follows
that
\begin{align*}
0 =&\ \left. \IP{\IP{1, \frac{\del \gg}{\del t}}}_{\gg} \right|_{t=1} = \left.
\frac{d}{dt} I(\gg_t) \right|_{t=1}.
\end{align*}
But since $\gg$ is a geodesic we conclude that
\begin{align*}
\frac{d^2}{dt^2} I(\gg_t) =&\ \frac{d}{dt} \IP{\IP{1, \frac{\del \gg}{\del t}}}
= \IP{\IP{1, \frac{D}{\del t} \frac{\del \gg}{\del t}}} = 0.
\end{align*}
Thus $\frac{d}{dt} I(\gg_t) = 0$ for all $t$, and hence $I(\gg_1) = I(\gg_0)$.
\end{proof}
\end{lemma}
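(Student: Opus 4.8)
The plan is to exploit the fact that the $K$-energy is insensitive to adding a constant to the potential. Since $\gw_{\psi + s} = \gw_{\psi}$ for every constant $s$, the metric, its scalar curvature, and hence $\nu$ are all unchanged, so that $\nu(\psi + s) = \nu(\psi)$. Setting $\psi_s := \psi + s$, the only part of $\FF_{\phi,\tau}(\psi_s)$ (see (\ref{Fdef})) that varies with $s$ is the distance term $\frac{\bar{d}^2(\phi,\psi_s)}{2\tau}$. If $\psi$ minimizes $\FF_{\phi,\tau}$, then $s=0$ minimizes $s \mapsto \FF_{\phi,\tau}(\psi_s)$, and therefore also minimizes $s \mapsto d(\phi,\psi_s)$. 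Differentiating at $s=0$ yields the first-order condition $\frac{d}{ds}\big|_{s=0} d(\phi,\psi_s) = 0$.

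Next I would unwind this condition with the first variation of length. The variation field of $s \mapsto \psi_s$ is the constant function $1$, and the curve $\psi_s$ is itself a geodesic, since its velocity is $0$ and (\ref{geod}) holds trivially. Applying Lemma \ref{lengthvariations} (equivalently the explicit formula of Lemma \ref{C1distance}) to the geodesic $\gg : [0,1] \to \HH$ connecting $\phi$ to $\psi$, the interior integral drops out because $\gg$ is geodesic, leaving only the endpoint term $\IP{\IP{1, \frac{\del\gg(1,0)}{\del t}}}_{\gg(1)} = \int_M \frac{\del\gg(1,0)}{\del t}\,\gw_{\psi}^n = 0$. By the integral formula (\ref{Iformula}) for $I$, the left-hand side is exactly $\frac{d}{dt} I(\gg_t)\big|_{t=1}$, so minimality of $\psi$ forces $\frac{d}{dt} I(\gg_t)\big|_{t=1} = 0$.

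It then remains to propagate this single-endpoint identity to $I(\psi) = I(\phi)$, for which I would show $t \mapsto I(\gg_t)$ is affine. Writing $\frac{d}{dt}I(\gg_t) = \IP{\IP{1, \frac{\del\gg}{\del t}}}_{\gg_t}$ and differentiating once more via the compatibility of the connection with the metric gives $\frac{d^2}{dt^2}I(\gg_t) = \IP{\IP{\frac{D}{\del t}1, \frac{\del\gg}{\del t}}} + \IP{\IP{1, \frac{D}{\del t}\frac{\del\gg}{\del t}}}$. The first term vanishes because $\frac{D}{\del t}1 = 0$ by (\ref{dirdef}) (the constant $1$ has $\dot{1}=0$ and $\N 1 = 0$), and the second vanishes because $\gg$ is a geodesic, $\frac{D}{\del t}\frac{\del\gg}{\del t} = 0$. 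Hence $\frac{d}{dt}I(\gg_t)$ is constant in $t$; being zero at $t=1$ it is identically zero, so $I(\gg_1)=I(\gg_0)$, i.e. $I(\psi)=I(\phi)$. Reducing to $\phi = 0$ at the outset costs nothing and only lightens notation.

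The one genuinely delicate point is regularity: the connecting geodesic $\gg$ is a priori only $C^{1,1}$, so the covariant-derivative manipulations and the vanishing of the interior term in the first variation are, strictly speaking, formal. I expect this to be the main obstacle, and I would dispatch it exactly as in the proofs of Lemma \ref{lengthvariations} and Lemma \ref{C1distance}: run the computation along the smooth $\ge$-approximate geodesics of Lemma \ref{approxgeodlemma}, where every term is legitimate and the defect in the geodesic equation contributes only an $O(\ge)$ error, and then let $\ge \to 0$, using that the approximate geodesics converge in weak $C^{1,1}$ and that their energy elements converge to constants.
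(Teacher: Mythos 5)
Your proof is correct and follows essentially the same route as the paper's: reduce to $\phi=0$, use the invariance of $\nu$ under adding constants to conclude that $s=0$ is critical for $s\mapsto d(\phi,\psi_s)$, identify the resulting endpoint term of the first variation with $\left.\frac{d}{dt}I(\gg_t)\right|_{t=1}$, and then observe that $I$ is affine along the geodesic so this derivative vanishes identically. (Two small remarks: the velocity of $\psi_s$ is the constant function $1$, not $0$ --- what makes it a geodesic is that $\ddot{\psi}_s=0$ and $\N\dot{\psi}_s=0$; and your closing point about justifying the computation along $\ge$-approximate geodesics addresses a regularity issue the paper leaves implicit.)
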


\section{Long time existence of minimizing movements} \label{ltesec}

In this section we prove Theorem \ref{fundexistthm}.
As mentioned in the introduction, the proof will be an application of an 
theorem of Mayer \cite{Mayer} on the long time existence of minimizing
movements for convex lower semicontinuous functionals on complete nonpositively
curved metric spaces (\cite{Mayer} Theorem 1.13, Theorem \ref{Mayer} below).  We
begin by recalling the setup and statement of this theorem.

\subsection{Mayer's Theorem}

\begin{defn} A metric space $(X, d)$ is a \emph{path-length space} if any two
points $x,y \in X$ can be connected by a path $\gg :[0,1] \to X$ such
that for all $t \in [0,1]$, $d(x,\gg(t)) = t d(x,y)$.  Such a path $\gg$ will be
called a \emph{constant speed geodesic}.
\end{defn}

\begin{defn} We say that a metric space $(X, d)$ is an \emph{NPC space} if $(X,
d)$ is a path-length space and for every choice of points $a,b,c \in X$, if $\gg
: [0,1] \to X$ denotes the unique geodesic path connecting $b$ to $c$, then for
all $t \in[0,1]$ one has
\begin{align} \label{hyptri2}
d(a,\gg(t))^2 \leq (1 - t) d(a,b)^2 + t d(a,c)^2 - t(1-t) d(b,c)^2.
\end{align}
\end{defn}

\noindent Inequality (\ref{hyptri2}) is a kind of ``hyperbolic triangle
inequality'' in that it holds on smooth manifolds with nonpositive curvature and
intuitively says that triangles bend inwards.  In \S \ref{furtherprops} we will
furthermore require the analogous ``quadrilateral comparison'' inequality for
NPC spaces.

\begin{thm} \label{quadcomp} (\cite{Korevaar} Corollary 2.1.3) Let $(X, d)$ be a
complete NPC
space.  Given $x_0,x_1,y_0,y_1 \in X$, let $x_t$ and $y_t$ denote the geodesics
connecting $x_0$ to $x_1$ and $y_0$ to $y_1$ respectively.  Then for all $t \in
[0,1]$ one has
\begin{align*}
d^2(x_t,y_0) + d^2(x_{1-t},y_1) \leq&\ d^2(x_0,y_0) + d^2(x_1,y_1) + 2 t^2
d^2(x_0,x_1)\\
&\ + t ( d^2(y_0,y_1) - d^2(x_0,x_1)) - t(d(y_0,y_1) - d(x_0,x_1))^2. 
\end{align*}
\end{thm}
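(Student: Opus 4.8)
The plan is to derive the inequality from the single NPC triangle comparison (\ref{hyptri2}) by applying it along the geodesic $x_t$ with two different external vertices. It is clarifying to first put the right-hand side in closed form. Expanding $t(d^2(y_0,y_1)-d^2(x_0,x_1)) - t(d(y_0,y_1)-d(x_0,x_1))^2$ and collecting terms, the curvature corrections telescope and the claimed bound becomes exactly
\begin{align*}
d^2(x_t,y_0) + d^2(x_{1-t},y_1) \leq d^2(x_0,y_0) + d^2(x_1,y_1) - 2t(1-t)\,d^2(x_0,x_1) + 2t\, d(x_0,x_1)\, d(y_0,y_1).
\end{align*}
This is the form I would aim for, since it exposes the single negative curvature term and an extra ``Cauchy--Schwarz'' term.

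First I would bound the two terms on the left separately. Applying (\ref{hyptri2}) to the geodesic from $x_0$ to $x_1$ with external vertex $a=y_0$ at parameter $t$, and then with external vertex $a=y_1$ at parameter $1-t$, gives
\begin{align*}
d^2(x_t,y_0) &\leq (1-t)\,d^2(x_0,y_0) + t\,d^2(x_1,y_0) - t(1-t)\,d^2(x_0,x_1),\\
d^2(x_{1-t},y_1) &\leq t\,d^2(x_0,y_1) + (1-t)\,d^2(x_1,y_1) - t(1-t)\,d^2(x_0,x_1).
\end{align*}
Adding these, the two curvature terms combine to the required $-2t(1-t)\,d^2(x_0,x_1)$, and after cancelling the common endpoint contributions the target reduces, for $t \in (0,1]$ (the endpoint $t=0$ being immediate), upon dividing by $t$, to the purely four-point \emph{diagonal estimate}
\begin{align*}
d^2(x_1,y_0) + d^2(x_0,y_1) \leq d^2(x_0,y_0) + d^2(x_1,y_1) + 2\, d(x_0,x_1)\, d(y_0,y_1).
\end{align*}

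The diagonal estimate is the crux, and I expect it to be the main obstacle. A direct iteration of (\ref{hyptri2}) does not suffice: the crossed distances $d(x_1,y_0)$ and $d(x_0,y_1)$ join an endpoint of each geodesic rather than an interior point, so every application of the comparison places them on the \emph{larger} side and yields only lower bounds for their sum. Modelling the configuration in a Hilbert space shows the estimate is sharp and reduces there precisely to the Cauchy--Schwarz inequality $2\IP{x_0-x_1,\,y_0-y_1} \leq 2\,|x_0-x_1|\,|y_0-y_1|$, so genuine nonpositive-curvature input is unavoidable and no finite combination of triangle comparisons at these four points alone can close the argument. The way I would finish is to invoke the comparison-configuration (Reshetnyak majorization) structure available in a complete NPC space: the quadrilateral with successive vertices $x_0,y_0,x_1,y_1$ is majorized by a convex Euclidean quadrilateral with the same side lengths, under a boundary map that does not increase distances, whence the diagonal estimate follows from its Euclidean (Cauchy--Schwarz) version. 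This majorization is exactly the deep four-point manifestation of the hyperbolic comparison recorded in Lemma \ref{triangleineq}, and is why the full statement is attributed to Korevaar; with it in hand, the two displays above complete the proof.
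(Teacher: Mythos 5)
The paper does not prove this statement at all --- it is quoted verbatim from Korevaar--Schoen as their Corollary 2.1.3 and used as a black box --- so there is no in-paper argument to compare against. Your reduction is correct: the algebraic rewriting of the right-hand side as $d^2(x_0,y_0)+d^2(x_1,y_1)-2t(1-t)d^2(x_0,x_1)+2t\,d(x_0,x_1)\,d(y_0,y_1)$ checks out, the two applications of (\ref{hyptri2}) with external vertices $y_0$ and $y_1$ at parameters $t$ and $1-t$ are applied correctly, and summing them does reduce the claim, after dividing by $t$, to the four-point inequality $d^2(x_1,y_0)+d^2(x_0,y_1)\leq d^2(x_0,y_0)+d^2(x_1,y_1)+2\,d(x_0,x_1)\,d(y_0,y_1)$. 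You are also right that this last step is the genuine content: the naive quadrilateral inequality obtainable from midpoint comparisons only gives the weaker additive term $d^2(x_0,x_1)+d^2(y_0,y_1)$ in place of $2\,d(x_0,x_1)\,d(y_0,y_1)$, and the sharp version requires the Euclidean subembedding of the four points (Korevaar--Schoen's Lemma 2.1.2, equivalently Reshetnyak majorization), after which it is Cauchy--Schwarz in $\mathbb{R}^2$. This is in fact exactly how Korevaar--Schoen derive their Corollary 2.1.3, so you have reconstructed the cited source's route rather than found a new one. The only caveat is that your write-up is a reduction to the subembedding theorem rather than a self-contained proof --- the majorization itself is a nontrivial theorem you invoke without proof --- but since the paper treats the entire statement as a citation, stopping there is a reasonable place to stop.
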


\begin{defn} Let $(X, d)$ be an NPC space.  Given $B \geq 0$, a function $f:X
\to \mathbb R$ is \emph{$B$-convex} if for all $x_0,x_1 \in X$, we let $x_t :
[0,1] \to X$ denotes the geodesic connecting $x_0$ to $x_1$, then one has
\begin{align} \label{Bconv}
f(x_t) \leq (1-t) f(x_0) + t f(x_1) + Bt(1-t) d^2(x_0,x_1).
\end{align}
for all $t \in [0,1]$.
\end{defn}

\begin{thm} \label{Mayer} (\cite{Mayer} Theorem 1.13) Let $(X, d)$ be a complete
NPC space, and let $f : X \to (-\infty, \infty]$ satisfy
\begin{enumerate}
\item{$f$ is lower semicontinuous,}
\item{$f$ is $B$-convex for some $B \geq 0$.}
\end{enumerate}
Fix $y \in X$ and let
\begin{align*}
A :=&\ - \min \left\{0, \liminf_{d(x,y) \to \infty} \frac{f(x)}{d^2(x,y)}
\right\},\\
I_A :=&\ \begin{cases}
(0,\infty) & \mbox{ for } A = 0,\\
\left(0, \frac{1}{16 A} \right] & \mbox{ for } A > 0.
\end{cases}
\end{align*}
Then given $x_0 \in X$ with $f(x_0) < \infty$, there exists a function $x : I_A
\to X$ satisfying
\begin{align} \label{solnprop1}
x(t) =&\ \lim_{n \to \infty} W_{\frac{t}{n}}^n(x_0),\\  \label{solnprop2}
f(x(t)) \leq&\ f(x_0) \mbox{ for all } t \in I_A,\\ \label{solnprop3}
\lim_{t \to 0} x(t) =&\ x_0. 
\end{align}
Furthermore, the convergence in (\ref{solnprop1}) is uniform on compact
subintervals of $I_A$.
\end{thm}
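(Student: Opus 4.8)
The plan is to run the Crandall--Liggett generation scheme \cite{CL} in the metric setting, using the convexity inequalities available in a complete NPC space in place of the Hilbert- or Banach-space identities; throughout, $W_\tau(x)$ denotes the minimizer of $g \mapsto \tfrac{1}{2\tau} d^2(x,g) + f(g)$. First I would establish that $W_\tau$ is well defined for every $x$ with $f(x) < \infty$ and every $\tau \in I_A$. For existence I combine lower semicontinuity with coercivity: the definition of $A$ yields, for each $\delta > 0$, a constant $C_\delta$ with $f(g) \ge -(A+\delta)\, d^2(g,y) - C_\delta$, and inserting this and comparing $d(x,g)$ with $d(g,y)$ by the triangle inequality shows that for $\tau \le \tfrac{1}{16A}$ the penalty term dominates, so the functional is bounded below with bounded sublevel sets; a minimizing sequence is then bounded, and completeness plus lower semicontinuity produce a minimizer. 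The constant $16$ is exactly what keeps the quadratic in $d(g,y)$ coercive after absorbing the negative contribution of $f$. For uniqueness I use that in an NPC space $g \mapsto d^2(x,g)$ is strongly convex along geodesics, which is the content of the hyperbolic triangle inequality \eqref{hyptri2}; together with the $B$-convexity of $f$ this makes the functional strictly convex along the geodesic joining any two minimizers as soon as $\tfrac{1}{2\tau} > B$, forcing them to agree.

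Next I would record the one-step estimates that drive the iteration. Since $g = x$ is admissible in the minimization, $\tfrac{1}{2\tau} d^2(x, W_\tau x) + f(W_\tau x) \le f(x)$, which gives at once the decrease $f(W_\tau x) \le f(x)$ and the step bound $d^2(x, W_\tau x) \le 2\tau\,(f(x) - f(W_\tau x))$. Telescoping the decrease over $n$ steps of size $t/n$ and applying discrete Cauchy--Schwarz to the step bounds controls $d(x_0, W_{t/n}^n x_0)$ by a multiple of $\sqrt{t}$, which will yield \eqref{solnprop3} once convergence of the iterates is known.

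The heart of the argument, and the step I expect to be the main obstacle, is the doubly indexed comparison showing that $\{W_{t/n}^n(x_0)\}_n$ is Cauchy, uniformly for $t$ in compact subsets of $I_A$. Following Crandall--Liggett I would set $a_{i,j} = d(W_\sigma^i x_0, W_\rho^j x_0)$ for two step sizes $\sigma = s/m$ and $\rho = t/n$, and derive a recursion bounding $a_{i,j}$ in terms of $a_{i-1,j}$, $a_{i,j-1}$, and $a_{i-1,j-1}$. In the Banach setting this recursion comes from the resolvent identity; here its role is played by the quadrilateral comparison of Theorem \ref{quadcomp}, applied to the geodesic quadrilateral with vertices $W_\sigma^{i-1} x_0, W_\sigma^i x_0, W_\rho^{j-1} x_0, W_\rho^j x_0$, combined with the minimality defining the resolvents and the $B$-convexity of $f$. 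The difficulty is that both the curvature (NPC rather than flat) and the $B$-convexity inject error terms of order $B\tau$ into the classical recursion, and one must show these stay controlled so that solving the recursion still gives $d(W_\sigma^m x_0, W_\rho^n x_0) \le \omega(|s-t|,\sigma,\rho)$ with $\omega \to 0$ as the step sizes and $|s-t|$ tend to zero; it is here that the restriction $\tau \le \tfrac{1}{16A}$ is genuinely used to keep the constants uniform over $I_A$.

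Finally, completeness of $(X,d)$ lets me define $x(t) = \lim_n W_{t/n}^n(x_0)$ as in \eqref{solnprop1}, with the uniform Cauchy estimate of the previous step upgrading the convergence to uniform convergence on compact subintervals of $I_A$ and giving continuity of $x(\cdot)$. Property \eqref{solnprop2} then passes to the limit from the per-step decrease by lower semicontinuity of $f$, and \eqref{solnprop3} follows by letting $t \to 0$ in the step bound.
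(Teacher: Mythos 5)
The paper offers no proof of this statement: Theorem \ref{Mayer} is quoted verbatim from Mayer (\cite{Mayer}, Theorem 1.13) and used as a black box in \S\ref{ltesec}, so the only meaningful comparison is with Mayer's own argument. Your plan does reconstruct that argument in outline --- resolvent well-posedness, the one-step energy and distance estimates, and a Crandall--Liggett double-index recursion in which the quadrilateral comparison (Theorem \ref{quadcomp}) plays the role of the Banach-space resolvent identity --- and that is the correct skeleton.

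There is, however, one genuine gap. Your existence argument for the minimizer of $g \mapsto \tfrac{1}{2\tau}d^2(x,g) + f(g)$, namely ``a minimizing sequence is then bounded, and completeness plus lower semicontinuity produce a minimizer,'' does not work: a complete NPC space need not be locally compact (and $\bar{\HH}$ certainly is not), so a bounded minimizing sequence admits no convergent subsequence in general. The correct mechanism is the one you already invoke for uniqueness. By the NPC inequality (\ref{hyptri2}) the map $g \mapsto d^2(x,g)$ is uniformly convex along geodesics, so for $\tfrac{1}{2\tau} > B$ the functional $\FF_{x,\tau}$ satisfies, at the geodesic midpoint $g_{1/2}$ of $g_0, g_1$,
\begin{align*}
\FF_{x,\tau}(g_{1/2}) \leq \tfrac{1}{2}\FF_{x,\tau}(g_0) + \tfrac{1}{2}\FF_{x,\tau}(g_1) - \tfrac{1}{4}\left(\tfrac{1}{2\tau} - B\right) d^2(g_0,g_1);
\end{align*}
since the left side is at least the infimum, applying this to two far-out terms of a minimizing sequence forces the sequence to be Cauchy, and completeness plus lower semicontinuity then give existence, with uniqueness following from the same inequality. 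Two smaller points: the constant $16$ in $I_A$ is not what makes a single Moreau--Yosida functional coercive (a weaker restriction on $\tau$ suffices for that); it is calibrated so that $d(x_0, W_h^j x_0)$ stays controlled along the \emph{entire} discrete trajectory up to time $T$, exactly as in Lemma \ref{distcntrl}. And the double-index recursion, which you rightly identify as the heart of the matter, is only gestured at, so as written the proposal defers precisely the step where the real work lies.
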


\begin{rmk} The operators $W^n_{\frac{t}{n}}$ are iterations of the resolvent
operator as in Definition \ref{discdef}.  Note that the theorem first of all
asserts the long time existence of discrete gradient flows, in the sense of
Definition \ref{discdef}, with any initial condition and arbitrarily small
uniform step size.  Moreover, it asserts that a sequence of such converges in
the distance topology to some limiting path through $X$, which by definition is
a minimizing movement.
\end{rmk}

\begin{rmk} Of course implicit in the statement of the theorem is that for every
$t \in [0, \infty)$, the sequence $\{W^n_{\frac{t}{n}}(x_0)\}$ is Cauchy in
$(X, d)$, and so in particular lies in a ball of some controlled
size around $0$.  However, it is relevant to the proof of Theorem \ref{highreg}
to precisely exhibit the dependence of this distance on the initial data.  This
is included in the work of Mayer, and for convenience we include the technical
lemma below.  Moreover, this lemma makes clear the role of the constant $A$,
which as a measure of the decay rate of $f$ in turn controls how fast the
distance of points along the flow can grow.
\end{rmk}

\begin{lemma} (\cite{Mayer} Lemma 1.11) \label{distcntrl} Let $(X, d)$ be a
complete NPC space, and let $f : X \to (-\infty, \infty]$ satisfy the conditions
of Theorem \ref{Mayer}.  Let $x_0 \in X$ satisfy $f(x_0) < \infty$, and let
$x_{j+1} = W_h(x_j)$.  If $A$ as defined in Theorem \ref{Mayer} satisfies $A =
0$, then for $T > 0$, one has
\begin{align*}
 d^2(x_0, x_j) \leq B j h,
\end{align*}
where $B$ depends on $f(x_0), d(x_0, y)$, $T$, and $\inf_{B_{R_T}(y)} f(x)$,
where $R_T$ is chosen so that for all $x$ satisfying $d(x,y) \geq R$, one has
$f(x) \geq
-\frac{d^2(x,y)}{8T}$.
\begin{proof} By the definition of the resolvent operator, we have that
$\FF_{x_i,h}(x_{i+1}) \leq \FF_{x_i,h}(x_i)$, or in other words
\begin{align} \label{distcntrl10}
 \frac{1}{2 h} d^2(x_{i+1},x_i) \leq&\ f(x_i) - f(x_{i+1}).
\end{align}
 Using the triangle inequality and Cauchy-Schwarz yields
\begin{align} \label{distcntrl20}
 d^2(x_0, x_j) \leq&\ \left( \sum_{i=0}^{j-1} d \left( x_i, x_{i+1} \right)
\right)^2 \leq j \sum_{i = 0}^{j-1} d^2(x_i, x_{i+1}).
\end{align}
Combining (\ref{distcntrl10}) and (\ref{distcntrl20}) yields
\begin{align*}
 d^2(x_0, x_j) \leq&\ 2 j h \left( f(x_0) - f(x_j) \right).
\end{align*}
Now choose $K > 0$ so that for all $x \in X$ one has
\begin{align*}
 f(x) \geq - K - \frac{d^2(x, y)}{8T}.
\end{align*}
Note that $K$ can be chosen to be $K = \min \left\{1, - \inf_{B_R(y)} f
\right\}$ where $R$ is
chosen so that for all $x$ satisfying $d(x,y) \geq R$, one has $f(x) \geq
-\frac{d^2(x,y)}{8T}$, which exists by our hypothesis on $A$.
Thus we have
\begin{align*}
 d^2(x_0, x_j) \leq 2 j h \left( f(x_0) + K + \frac{1}{4T} (d^2(x_j, x_0) +
d^2(x_0,y)) \right).
\end{align*}
Since $jh \leq T$ we conclude that
\begin{align*}
 d^2(x_0,x_j) \leq 4 j h \left( f(x_0) + K + \frac{1}{4T} d^2(x_0,y) \right).
\end{align*}
\end{proof}
\end{lemma}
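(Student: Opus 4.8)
The plan is to exploit the defining minimality of the resolvent $W_h$ to convert each flow step into a quantitative descent estimate for $f$, and then to sum these estimates over the $j$ steps. First I would record the one-step inequality: since $x_{i+1} = W_h(x_i)$ minimizes the Moreau--Yosida functional $\FF_{x_i,h}(\psi) = \frac{1}{2h} d^2(x_i,\psi) + f(\psi)$, comparing its value at $x_{i+1}$ against the competitor $x_i$, at which the distance term vanishes, gives
\begin{align*}
\frac{1}{2h} d^2(x_{i+1},x_i) \leq f(x_i) - f(x_{i+1}).
\end{align*}
This is the only place the variational characterization of $W_h$ enters; note that the $B$-convexity hypothesis of Theorem \ref{Mayer} is needed only to guarantee that $W_h$ is well defined, not in the estimate itself.

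Next I would chain the single steps into a global displacement bound. By the triangle inequality the total displacement $d(x_0,x_j)$ is at most $\sum_{i=0}^{j-1} d(x_i,x_{i+1})$, and Cauchy--Schwarz upgrades this to
\begin{align*}
d^2(x_0,x_j) \leq j \sum_{i=0}^{j-1} d^2(x_i,x_{i+1}).
\end{align*}
Inserting the one-step inequality and telescoping the resulting sum of $f(x_i) - f(x_{i+1})$ yields the intermediate bound $d^2(x_0,x_j) \leq 2jh\left( f(x_0) - f(x_j) \right)$. All that remains is to control the terminal value $-f(x_j)$ from above.

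The main obstacle is precisely this last bound, and it is exactly where the hypothesis $A = 0$ is used: it ensures that $f$ cannot decay faster than a slowly growing multiple of $d^2(\cdot,y)$, so one may fix a constant $K$, namely $K = \min\left\{ 1, -\inf_{B_{R_T}(y)} f \right\}$, for which $f(x) \geq -K - \frac{1}{8T} d^2(x,y)$ holds for all $x$. Applying this at $x_j$, estimating $d^2(x_j,y) \leq 2 d^2(x_j,x_0) + 2 d^2(x_0,y)$ through the triangle inequality and $(a+b)^2 \leq 2a^2 + 2b^2$, and substituting gives
\begin{align*}
d^2(x_0,x_j) \leq 2jh\left( f(x_0) + K + \frac{1}{4T}\left( d^2(x_j,x_0) + d^2(x_0,y) \right) \right).
\end{align*}
Finally I would invoke the constraint $jh \leq T$: the coefficient of $d^2(x_0,x_j)$ appearing on the right is then $\frac{jh}{2T} \leq \frac{1}{2}$, so that term can be absorbed into the left-hand side, leaving
\begin{align*}
d^2(x_0,x_j) \leq 4jh\left( f(x_0) + K + \frac{1}{4T} d^2(x_0,y) \right),
\end{align*}
which is the asserted estimate with $B = 4\left( f(x_0) + K + \frac{1}{4T} d^2(x_0,y) \right)$, a quantity depending only on $f(x_0)$, $d(x_0,y)$, $T$, and $\inf_{B_{R_T}(y)} f$ as claimed. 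The two points I would double-check are the legitimacy of the absorption step, which hinges on $jh \leq T$ being available uniformly over the partition, and the existence of $R_T$ guaranteeing the lower bound used to define $K$, which is exactly the content of the $A = 0$ assumption.
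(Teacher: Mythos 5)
Your proposal is correct and follows essentially the same route as the paper's proof: the one-step descent inequality from minimality of the resolvent, the triangle inequality plus Cauchy--Schwarz, the telescoping sum, the lower bound $f(x) \geq -K - \frac{1}{8T} d^2(x,y)$ furnished by $A = 0$, and the absorption of the $d^2(x_0,x_j)$ term using $jh \leq T$. You even make explicit the intermediate step $d^2(x_j,y) \leq 2d^2(x_j,x_0) + 2d^2(x_0,y)$ that the paper leaves implicit.
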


\subsection{Proof of Theorem \ref{fundexistthm}}
\subsubsection{NPC Property}

\begin{lemma} \label{NPCcompl} Let $(X, d)$ be a metric space such that every
pair of points $x,y \in X$ is connected by a unique geodesic $\gg : [0,1] \to
(\bar{X}, \bar{d})$, and that the hyperbolic triangle inequality (\ref{hyptri})
holds for triples of points $a,b,c \in X$.  Then $(\bar{X}, \bar{d})$ is an NPC
space.

\begin{proof} The proof is summarized in Figure \ref{fig1}.   Fix $\bar{x},
\bar{y}$ two points in $\bar{X}$, represented by
Cauchy sequences $\{x_n\}, \{y_n\}$.  Let $\gg_n : [0,1] \to \bar{X}$
denote the unique constant speed geodesic connecting $x_n$ to $y_n$.  In
particular, one has $\bar{d}(x_n, \gg_n(s)) = s \bar{d}(x_n, y_n)$ for all $n
\in \mathbb N, s \in [0,1]$.  Let $l_n := \bar{d}(x_n, y_n)$.  By the definition
of
the distance function on the completion one has that $\lim_{n \to \infty} l_n =
\bar{d}(x, y) = l$.  Fix some $s \in (0,1)$.  We claim that $\{\gg_n(s) \}$ is a
Cauchy sequence.  Given $\ge > 0$, first choose $N$ large so that for all $n,m >
N$, $\bar{d}(x_m, x_n) < \ge$, $\bar{d}(y_m, y_n) < \ge$.  Furthermore, fix a
point $\til{\gg}_n(s) \in X$ such that $d(\gg_n(s), \til{\gg}_n(s)) < \ge$. 
Now consider the
triangle $\Delta
abc$ with $a = \til{\gg}_n(s), b = x_m, c = y_m$.  We may apply (\ref{hyptri})
to
yield
\begin{align*}
\bar{d}(\til{\gg}_n(s), \gg_m(s))^2 \leq (1 - s) d(\til{\gg}_n(s), x_m)^2 + s
d(\til{\gg}_n(s), y_m)^2 -
s(1 - s) {d}(x_m, y_m)^2.
\end{align*}
But now note
\begin{align*}
{d}(\til{\gg}_n(s), x_m) \leq&\ \bar{d}(\til{\gg}_n(s), \gg_n(s)) +
\bar{d}(\gg_n(s), x_n) + \bar{d}(x_n, x_m)\\
\leq&\ s l_n + 2 \ge.
\end{align*}
Likewise
\begin{align*}
{d}(\til{\gg}_n(s), y_m) \leq&\ (1-s) l_n + 2 \ge.
\end{align*}
\begin{figure}[h] 
\begin{tikzpicture}[scale=3]

\draw [fill=black] (-4.7,2.7) node [left] (v1) {\tiny{$x_n$}} circle (0.03);
\draw [fill=black] (-4.3,2.1) node [left] (v3) {\tiny{$x_m$}} circle (0.03);
\draw [fill=black] (-4.1,1.8) node [left] (v5) {\tiny{$\overline{x}$}} circle
(0.03);
\draw [fill=black] (-3.7616,3.232) node [above left] (v7)
{\tiny{$\til{\gamma}_n(s)$}} circle (0.03);
\draw [fill=black] (-3.4907,2.4374) node [below] (v8) {\tiny{$\gamma_m(s)$}}
circle (0.03);
\draw [fill=black] (-3.336,2.1175) node [below] (v9)
{\tiny{$\overline{\gamma}(s)$}} circle (0.03);

\draw [fill=black] (-3.9226,3.0337) node [above left] (v1)
{\tiny{$\gamma_n(s)$}} circle (0.03);

\draw [fill=black] (-1.4,4.1) node [right] (v2) {\tiny{$y_n$}} circle (0.03);
\draw [fill=black] (-1,3.5) node [right] (v4) {\tiny{$y_m$}} circle (0.03);
\draw [fill=black] (-0.8,3.2) node [right] (v6) {\tiny{$\overline{y}$}} circle
(0.03);

\node (v7) at (-0.7455,3.2194) {};
\node (v8) at (-0.9353,3.5147) {};
\node (v9) at (-1.3488,4.1204) {};

\node (v10) at (-4.1405,1.7784) {};
\node (v11) at (-4.342,2.0807) {};
\node (v12) at (-4.7466,2.6841) {};

\node (v13) at (-3.7779,3.2878) {};
\node (v14) at (-3.4696,2.3811) {};

\draw  [dashed] (v7) edge (v10);
\draw  (v8) edge (v11);
\draw  (v9) edge (v12);
\draw [dashed] (v13) edge (v14);

\draw (-4.2969,2.1004) .. controls (-3.9876,2.2902) and (-3.7134,2.958) ..
(-3.7616,3.225);

\draw (-0.9999,3.4923) .. controls (-1.9067,3.1971) and (-3.2635,2.9299) ..
(-3.7546,3.225) node (v13) {};

\node at (-4.1077,2.6119) {\tiny{$\approx s l_n$}};
\node at (-2.6295,3.2634) {\tiny{$\approx (1-s) l_n$}};

\node at (-3.454,2.7643) {\tiny{$\approx 0$}};
\end{tikzpicture}
\caption{Convergence of geodesics in $\bar{\HH}$}
\label{fig1}
\end{figure}

\noindent Combining these facts yields
\begin{align*}
\bar{d}(\gg_n(s), \gg_m(s))^2 \leq&\ \bar{d}(\gg_n(s), \til{\gg}_n(s))^2 +
\bar{d}(\til{\gg}_n(s), \gg_m(s))^2\\
\leq&\ (1-s) (s l_n + \ge)^2 + s ( (1-s) l_n +
\ge)^2 -
s(1-s) l_m^2 + C(s,l)\ge\\
=&\ s(1-s) \left( s l_n^2 + (1-s) l_n^2 - l_m^2 \right) + C(s,l) \ge\\
=&\ s(1-s) \left( l_n^2 - l_m^2 \right) + C(s,l) \ge.
\end{align*}
Since $\lim_{n \to \infty} l_n = l$, the claim follows.  In particular, since
$\{\gg_n(s) \}$ is Cauchy for every $s$, this defines the limiting curve
$\bar{\gg}$.  It is clear that the property $\bar{d}(\bar{x}, \bar{\gg}(s)) = s
l$
passes to the limit.  Furthermore, given that we have established that geodesics
connecting Cauchy sequences of points
in $X$ converge to a limiting geodesic curve connecting two points in $\bar{X}$,
property (\ref{hyptri}) certainly passes to the limit as well. 
Also, uniqueness of the limiting geodesic follows by a very similar argument to
the existence.  In particular if $\til{\gg} :[0,1] \to \bar{X}$ is another curve
satisfying $d(x,\til{\gg}(s)) = s d(x,y)$ and $d(\til{\gg}(s),y) = (1-s) d(x,y)$
for all $s$, then by using the hyperbolic triangle inequality for the triangle
$\gD xy\til{\gg}(s)$ one obtains directly that $\til{\gg}$ is the same as the
curve constructed above.
\end{proof}
\end{lemma}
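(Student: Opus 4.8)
The plan is to verify the two defining properties of an NPC space for the completion $(\bar{X}, \bar{d})$: that any two of its points are joined by a constant speed geodesic, and that the comparison inequality (\ref{hyptri2}) holds for every triple of its points. The hypotheses hand us versions of both statements on the dense subset $X$, so the whole proof amounts to passing these properties to $\bar{X}$. The essential difficulty, and the point I would organize the argument around, is that the assumed inequality (\ref{hyptri}) only applies when its three vertices $a,b,c$ all lie in $X$, while the interpolating points produced along geodesics live a priori only in $\bar{X}$.

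To build the geodesic between $\bar{x}, \bar{y} \in \bar{X}$, represent them by Cauchy sequences $\{x_n\}, \{y_n\} \subset X$, join $x_n$ to $y_n$ by the unique geodesic $\gg_n : [0,1] \to \bar{X}$, and set $l_n := \bar{d}(x_n, y_n)$, so that $l_n \to l := \bar{d}(\bar{x}, \bar{y})$. The crux is to show that for each fixed $s \in (0,1)$ the sequence $\{\gg_n(s)\}$ is Cauchy. Given $\ge > 0$, I would fix $N$ with $\bar{d}(x_n, x_m), \bar{d}(y_n, y_m) < \ge$ for $n, m > N$, and then \emph{approximate} the point $\gg_n(s) \in \bar{X}$ by a genuine point $\til{\gg}_n(s) \in X$ with $\bar{d}(\gg_n(s), \til{\gg}_n(s)) < \ge$. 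This approximation is exactly what dissolves the main obstacle: with the three vertices $\til{\gg}_n(s), x_m, y_m$ now all in $X$, the hypothesis (\ref{hyptri}) applies to the triangle they span, comparing $\bar{d}(\til{\gg}_n(s), \gg_m(s))$ to the side lengths.

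The rest is controlled bookkeeping with the triangle inequality for $\bar{d}$. Estimating $\bar{d}(\til{\gg}_n(s), x_m) \leq s l_n + 2\ge$ and $\bar{d}(\til{\gg}_n(s), y_m) \leq (1-s) l_n + 2\ge$, substituting into (\ref{hyptri}), and cancelling the $l_n^2$ cross terms, the comparison inequality collapses to
\begin{align*}
\bar{d}(\gg_n(s), \gg_m(s))^2 \leq s(1-s)(l_n^2 - l_m^2) + C(s,l) \ge.
\end{align*}
Since $l_n \to l$ and $\ge$ is arbitrary, the right-hand side tends to zero, so $\{\gg_n(s)\}$ converges to a point $\bar{\gg}(s)$. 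The constant speed identity $\bar{d}(\bar{x}, \bar{\gg}(s)) = s l$ then follows by taking limits in $\bar{d}(x_n, \gg_n(s)) = s l_n$, which gives the path-length property.

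Finally, because the $\gg_n$ converge pointwise to $\bar{\gg}$ and every distance involved converges, I would pass (\ref{hyptri}) to the limit to obtain (\ref{hyptri2}) for arbitrary triples in $\bar{X}$, completing the NPC verification. Uniqueness of $\bar{\gg}$ runs on the same template: any competing constant speed curve $\til{\gg}$ joining $\bar{x}$ to $\bar{y}$ can be compared against $\bar{\gg}$ through the limiting inequality applied to the degenerate triangle $\gD \bar{x} \bar{y} \til{\gg}(s)$, forcing $\til{\gg} = \bar{\gg}$. The only care needed beyond this is keeping the error constant $C(s,l)$ uniform across the approximation, which is immediate once $s$ and $l$ are fixed.
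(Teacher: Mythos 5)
Your proposal is correct and follows essentially the same route as the paper: the key move in both is to approximate the interpolating point $\gg_n(s) \in \bar{X}$ by a genuine point $\til{\gg}_n(s) \in X$ so that the hypothesis (\ref{hyptri}) applies to the triangle $\til{\gg}_n(s), x_m, y_m$, after which the same cancellation yields $\bar{d}(\gg_n(s),\gg_m(s))^2 \leq s(1-s)(l_n^2 - l_m^2) + C(s,l)\ge$ and the Cauchy property, with the constant-speed identity, the limiting inequality, and uniqueness all passing to the limit exactly as in the paper.
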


\subsubsection{Lower semicontinuity}

\begin{lemma} \label{lscextlsc} The function $\bar{f}$ as defined in
(\ref{lscext}) is lower semicontinuous.
\begin{proof} Fix $x_{\infty} \in \bar{X}$, and choose a sequence
$\{x_i \in \bar{X}\}$ converging to $x_{\infty}$ in the distance
topology.  By a diagonalization argument, for each $x_i$ we may choose a
sequence $x_i^j \in X$ such that
\begin{align*}
 \lim_{j \to \infty} d(x_i^j, x_i) =&\ 0\\
 \lim_{j \to \infty} f(x_i^j) =&\ \bar{f}(x_i).
\end{align*}
Note that if $x_i \in X$ one simply chooses $x_i^j = x_i$ for all $j$.  For each
$i$ choose $N_i$ such that for all $j \geq
N_i$, $d(x_i^j, x_i) \leq \frac{1}{i}$ and $f(x_i^j) \leq \bar{f}(x_i) +
\frac{1}{i}$.  It
follows from the triangle inequality that $\lim_{i \to \infty}
d(x_i^{{N}_i}, x_{\infty}) = 0$.  If $x_{\infty} \in X$ then since $f$ is lower
semicontinuous on $X$ one has
\begin{align*}
\bar{f}(x_{\infty}) = f(x_{\infty}) \leq \lim_{i \to \infty} f(x_i^{{N}_i})
\leq&\ \lim_{i \to \infty} \bar{f}(x_i) + \frac{1}{i} = \lim_{i \to \infty}
\bar{f}(x_i).
\end{align*}
On the other hand, if $x_{\infty} \in \bar{X} \backslash X$ then by definition
we have
\begin{align*}
\bar{f}(x_{\infty}) \leq \lim_{i \to \infty} f (x_i^{{N}_i})
\leq&\ \lim_{i \to \infty} \bar{f}(x_i) + \frac{1}{i} = \lim_{i \to \infty}
\bar{f}(x_i).
\end{align*}
The lemma follows.
\end{proof}
\end{lemma}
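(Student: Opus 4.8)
The plan is to verify lower semicontinuity sequentially: fix $x_\infty \in \bar{X}$ together with a sequence $\{x_i\} \subset \bar{X}$ with $x_i \to x_\infty$ in the distance topology, and show $\bar{f}(x_\infty) \leq \liminf_{i \to \infty} \bar{f}(x_i)$. The only information we are given about $f$ is that it is lower semicontinuous \emph{on $X$}, while the points $x_i$ may lie in $\bar{X} \backslash X$, where $\bar{f}$ is defined not by a value of $f$ but as a $\liminf$ over approximating sequences in $X$. So the organizing idea is a diagonalization that pulls each $x_i$ back to a genuine point of $X$ at which $f$ nearly attains $\bar{f}(x_i)$, after which everything is controlled by the hypothesis on $X$.

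First I would produce, for each fixed $i$, a \emph{recovery sequence}: points $x_i^j \in X$ with $d(x_i^j, x_i) \to 0$ and $f(x_i^j) \to \bar{f}(x_i)$ as $j \to \infty$. When $x_i \in X$ this is just the constant sequence $x_i^j = x_i$. When $x_i \in \bar{X} \backslash X$, it follows by unwinding Definition \ref{lscextdefn}: since $\bar{f}(x_i)$ is the infimum over sequences $X \ni y_n \to x_i$ of $\liminf_n f(y_n)$, one selects points in balls of radius $1/j$ about $x_i$ whose $f$-values approach this infimal $\liminf$, obtaining $f(x_i^j) \to \bar{f}(x_i)$. Having such sequences, I would choose for each $i$ an index $N_i$ with $d(x_i^{N_i}, x_i) \leq \tfrac{1}{i}$ and $f(x_i^{N_i}) \leq \bar{f}(x_i) + \tfrac{1}{i}$; the triangle inequality then gives $d(x_i^{N_i}, x_\infty) \leq \tfrac{1}{i} + d(x_i, x_\infty) \to 0$, so $\{x_i^{N_i}\} \subset X$ is a sequence in $X$ converging to $x_\infty$.

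With this diagonal sequence in hand I would conclude by cases. If $x_\infty \in X$, lower semicontinuity of $f$ on $X$ yields $\bar{f}(x_\infty) = f(x_\infty) \leq \liminf_i f(x_i^{N_i})$. If instead $x_\infty \in \bar{X} \backslash X$, then $\{x_i^{N_i}\}$ is one admissible approximating sequence in the definition of $\bar{f}(x_\infty)$, so directly $\bar{f}(x_\infty) \leq \liminf_i f(x_i^{N_i})$. In either case, combining with $f(x_i^{N_i}) \leq \bar{f}(x_i) + \tfrac{1}{i}$ and monotonicity of $\liminf$ gives $\bar{f}(x_\infty) \leq \liminf_i f(x_i^{N_i}) \leq \liminf_i\bigl(\bar{f}(x_i) + \tfrac{1}{i}\bigr) = \liminf_i \bar{f}(x_i)$, which is exactly lower semicontinuity.

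The hard part will not be any estimate but rather the recovery-sequence step: I must ensure that near a completion point the approximating points can be chosen so that $f(x_i^j)$ \emph{converges} to $\bar{f}(x_i)$, and not merely stays $\geq \bar{f}(x_i)$, which is where the $\liminf$ in (\ref{lscext}) has to be unpacked carefully. The remaining care is purely bookkeeping: the case split according to whether $x_\infty$ and the $x_i$ lie in $X$ or in $\bar{X}\backslash X$, and the observation that the argument invokes the lower semicontinuity of $f$ only on $X$ and never any regularity of $\bar{f}$ on $\bar{X}$.
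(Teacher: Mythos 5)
Your proposal is correct and follows essentially the same route as the paper's proof: the same recovery sequences $x_i^j$, the same diagonal choice of indices $N_i$, the same triangle-inequality step, and the same case split on whether $x_\infty$ lies in $X$ or in $\bar{X}\backslash X$. The only (welcome) refinement is your explicit care with the $\liminf$'s, which the paper writes somewhat loosely as limits.
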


\begin{lemma} \label{Kenerglsc} The function $\nu$ is lower semicontinuous on
$(\HH, d)$.
\begin{proof} This follows directly from Theorem \ref{Kenergydecay}.
\end{proof}
\end{lemma}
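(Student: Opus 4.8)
The plan is to deduce lower semicontinuity directly from the $K$-energy decay estimate of Theorem \ref{Kenergydecay}, the entire content of the argument being carried by that theorem together with the correct choice of base point. Since $(\HH, d)$ is a metric space, $\nu$ is lower semicontinuous precisely when, for every $\phi \in \HH$ and every sequence $\{\phi_n\} \subset \HH$ with $d(\phi_n, \phi) \to 0$, one has $\nu(\phi) \leq \liminf_{n \to \infty} \nu(\phi_n)$. So I would fix such a $\phi$ and such a convergent sequence $\{\phi_n\}$ and aim to verify this inequality.

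The key step is to apply Theorem \ref{Kenergydecay} with the \emph{limit point} $\phi$ playing the role of the base point $\phi_0$ and with $\phi_n$ playing the role of $\phi_1$. This gives, for each $n$,
\begin{align*}
\nu(\phi_n) \geq \nu(\phi) - d(\phi, \phi_n) \sqrt{\CC(\phi)}.
\end{align*}
Because $\phi \in \HH$ is fixed, the Calabi energy $\CC(\phi)$ is a finite constant independent of $n$, so the only quantity varying on the right-hand side is the distance $d(\phi, \phi_n)$, which tends to zero by hypothesis. Taking the liminf over $n$ then yields $\liminf_{n \to \infty} \nu(\phi_n) \geq \nu(\phi)$, which is exactly the required lower semicontinuity.

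I do not expect a genuine obstacle here, but the one point that must not be glossed over is the choice of base point. One must anchor the inequality of Theorem \ref{Kenergydecay} at the limit $\phi$ rather than at the approximating point $\phi_n$. Anchoring instead at $\phi_n$ would produce the factor $\sqrt{\CC(\phi_n)}$, and since the Calabi energy need not remain uniformly bounded along a merely $d$-convergent sequence, the error term $d(\phi, \phi_n)\sqrt{\CC(\phi_n)}$ could fail to vanish in the limit, so the argument would break down. Fixing the base point at the limit $\phi$ is precisely what makes the coefficient of the vanishing distance a fixed finite constant, and this closes the proof.
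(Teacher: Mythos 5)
Your proposal is correct and is precisely the argument the paper intends: the paper's proof consists of the single sentence ``This follows directly from Theorem \ref{Kenergydecay},'' and your expansion — anchoring the decay estimate at the limit point $\phi$ so that the coefficient $\sqrt{\mathcal{C}(\phi)}$ of the vanishing distance is a fixed finite constant — is exactly how that theorem yields lower semicontinuity, as the paper itself notes in the remark following the definition of $\bar{\nu}$.
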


\begin{lemma} For every $\phi \in \bar{\HH}$,
$\bar{\nu}(\phi) > -\infty$.
 \begin{proof} Fix $\phi \in \bar{\HH}$, and $\{\phi_i\} \in \HH$ any
sequence converging to $\phi$ in the distance topology.  By the triangle
inequality we have, for sufficiently large $i$,
\begin{align*}
 d(0, \phi_i) \leq d(0, \phi) + d(\phi, \phi_i) \leq C.
\end{align*}
Thus by Theorem \ref{Kenergydecay} there exists a constant $C$ depending only on
$\phi$ such that
\begin{align*}
 \lim_{i \to \infty} \nu(\phi_i) \geq - C.
\end{align*}
We conclude that $\bar{\nu}(\phi) > - \infty$.
\end{proof}
\end{lemma}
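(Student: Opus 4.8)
The plan is to reduce at once to the case $\phi \in \bar{\HH} \setminus \HH$, since for $\phi \in \HH$ the explicit formula (\ref{Kenergform}) shows that $\bar{\nu}(\phi) = \nu(\phi)$ is finite. For a genuine boundary point, Definition \ref{lscextdefn} gives $\bar{\nu}(\phi) = \liminf \nu(\phi_i)$ taken over all sequences $\{\phi_i\} \subset \HH$ converging to $\phi$ in the distance topology. It therefore suffices to produce a lower bound for $\nu(\phi_i)$ that is uniform in $i$ and independent of the chosen approximating sequence.

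The key tool is Chen's decay estimate, Theorem \ref{Kenergydecay}, anchored at the fixed smooth base point $0 \in \HH$ (note $\gw_0 = \gw > 0$, so indeed $0 \in \HH$). For this base point $\nu(0) = 0$, and the Calabi energy $\CC(0)$ is a fixed finite constant determined by the background metric alone. First I would use the fact that distance convergence forces $d(\phi, \phi_i) \to 0$, so that for all large $i$ the triangle inequality yields $d(0, \phi_i) \leq d(0, \phi) + 1 =: C$, a bound depending only on $\phi$. Applying Theorem \ref{Kenergydecay} with $\phi_0 = 0$ and $\phi_1 = \phi_i$ then gives $\nu(\phi_i) \geq \nu(0) - d(0, \phi_i)\sqrt{\CC(0)} \geq - C\sqrt{\CC(0)}$, which is uniform in $i$. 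Passing to the $\liminf$ bounds $\bar{\nu}(\phi)$ below by $-C\sqrt{\CC(0)} > -\infty$, as desired.

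The subtle point --- and the only place the argument could break --- is the choice of base point in Theorem \ref{Kenergydecay}. The decay estimate controls $\nu$ from below in terms of the distance to the base point weighted by the Calabi energy \emph{at} that base point; had I anchored it at a moving point such as $\phi_0 = \phi_i$, the relevant Calabi energy could blow up as $\phi_i$ degenerates toward the singular limit $\phi$, and all uniformity would be lost. Fixing the single smooth reference $0$ keeps the weight $\sqrt{\CC(0)}$ constant and the distances bounded, which is precisely what converts the one-sided estimate of Theorem \ref{Kenergydecay} into a genuine uniform lower bound along the sequence; no additional regularity for the limit point $\phi$ is required.
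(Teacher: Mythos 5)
Your argument is correct and is essentially identical to the paper's proof: both fix the smooth base point $0$, use the triangle inequality to bound $d(0,\phi_i)$ uniformly for an approximating sequence, and then apply Theorem \ref{Kenergydecay} anchored at $0$ to obtain a uniform lower bound on $\nu(\phi_i)$ that passes to the $\liminf$. Your added remarks on handling $\phi \in \HH$ separately and on why the base point must be fixed are sensible elaborations but do not change the substance.
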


\subsubsection{Convexity}

In this subsection we establish geodesic convexity of $\bar{\nu}$ on
$\bar{\HH}$.  We begin with some preliminary lemmas.

\begin{lemma} \label{Kenergapprox} Given $\phi \in \HH^{1,1}$ there exists a
sequence $\{\phi_i\} \in \HH$ converging to $\phi$ in the weak $C^{1,1}$
topology such that
\begin{align*}
 \lim_{i \to \infty} \nu(\phi_i) = \nu(\phi).
\end{align*}
\begin{proof} By convolving with a mollifier we can construct a sequence
$\{\phi_i\}$ with a uniform $C^{1,1}$ bound converging strongly to $\phi$ in
$C^{1,\ga}$ and $W^{2,p}$,
and moreover $\frac{\gw_{\phi_i}^n}{\gw^n}$ converges strongly to
$\frac{\gw_{\phi}^n}{\gw^n}$ in $L^2$.  Due to this strong
convergence, there exists a subsequence, still denoted $\{\phi_i\}$, such that
$\frac{\gw_{\phi_{i}}^n}{\gw^n}$ converges to $\frac{\gw_{\phi}^n}{\gw^n}$
almost everywhere.

First note that by the strong convergence in $C^{1,\ga}$ it follows directly
from
(\ref{Iformula2}) and (\ref{Jformula2}) that the $I$ and $J$ terms appearing in
(\ref{Kenergform}) converge along this sequence to the limiting value at $\phi$.
 To deal with the
remaining term, we observe that almost everywhere convergence of the volume form
ratios implies that
$f_k :=
\frac{\gw_{\phi_{t_k}}^n}{\gw^n} \log \frac{\gw_{\phi_{t_k}}^n}{\gw^n}$
converges almost
everywhere to $f := \frac{\gw_{\phi}^n}{\gw^n} \log
\frac{\gw_{\phi}^n}{\gw^n}$.  By a standard measure theoretic lemma, since $M$
is compact, and in particular the measure induced by $\gw$ is finite, we may
choose a further subsequence to obtain $f_k
\to f$ almost everywhere and with respect to measure.  Note that the sequence
$\{f_k\}$ satisfies uniform upper and lower bounds depending on the $C^{1,1}$
bound for $\phi$, and we call this bound $A$.  By the
convergence in
measure, for any $\ge > 0$ we may
choose $N_{\ge}$ large so that for all $n \geq N$, $U_k := \{x \in M | \brs{f_k
- f} \geq \ge\}$ satisfies $\int_{U_k} \gw^n \leq \ge$.  We then have, for all
$k \geq N_{\ge}$,
\begin{align*}
 \brs{\int_M \left(f - f_k \right) \gw^n} \leq&\ \int_M \brs{f - f_k} \gw^n\\
=&\ \int_{U_k} \brs{f - f_k} \gw^n + \int_{M \backslash U_n} \brs{f - f_k}
\gw^n\\
\leq&\ 2 A \ge + V \ge.
\end{align*}
It follows that the sequence $\{\phi_{N_{\frac{1}{n}}} \}$ satisfies the
required properties.
\end{proof}
\end{lemma}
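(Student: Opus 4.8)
The plan is to realize the approximation by regularization and then to pass each of the three terms in the explicit K-energy formula (\ref{Kenergform}) to the limit separately. First I would convolve $\phi$ against a standard mollifier (working in coordinate charts and patching with a partition of unity) to produce a sequence $\{\phi_i\} \subset \HH$ of smooth K\"ahler potentials carrying a uniform $C^{1,1}$ bound inherited from $\phi \in \HH^{1,1}$, and converging to $\phi$ strongly in $C^{1,\ga}$ and in $W^{2,p}$ for every finite $p$. This is exactly weak $C^{1,1}$ convergence with a uniform bound. Two consequences I would record: the densities $\gw_{\phi_i}^n/\gw^n$, being degree-$n$ polynomials in the second derivatives, are uniformly bounded in $L^\infty$ and (taking $p \geq 2n$) converge to $\gw_\phi^n/\gw^n$ strongly in $L^2$; after passing to a subsequence I may assume this convergence is also almost everywhere.

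Next I would dispatch the $I(\phi)$ and $J(\phi)$ contributions using the explicit expressions (\ref{Iformula2}) and (\ref{Jformula2}). These are finite sums of integrals of the form $\int_M \phi\, \Theta \wedge (\sqrt{-1}\del\delb\phi)^j$ where $\Theta$ is a fixed wedge product of $\gw$ and $\rho(\gw)$. Since the second derivatives converge strongly in $W^{2,p}$ with uniform $L^\infty$ bounds, the products $(\sqrt{-1}\del\delb\phi_i)^j$ converge strongly in $L^{p/j}$; paired against the uniformly convergent potentials $\phi_i$ this forces each integral to converge to its value at $\phi$. Hence $I(\phi_i) \to I(\phi)$ and $J(\phi_i) \to J(\phi)$, which handles the last two terms of (\ref{Kenergform}).

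The substantive term is the entropy $\int_M \log(\gw_\phi^n/\gw^n)\,\gw_\phi^n$. Setting $f_i := \tfrac{\gw_{\phi_i}^n}{\gw^n}\log\tfrac{\gw_{\phi_i}^n}{\gw^n}$ and $f := \tfrac{\gw_\phi^n}{\gw^n}\log\tfrac{\gw_\phi^n}{\gw^n}$, I would use that $t \mapsto t\log t$ is continuous on $[0,\infty)$ and bounded on $[0,A]$, where $A$ is the uniform $L^\infty$ bound on the densities. The almost everywhere convergence of the densities then gives $f_i \to f$ almost everywhere, with $\{f_i\}$ uniformly bounded above and below; since $M$ is compact and $\gw^n$ has finite total mass, the bounded convergence theorem yields $\int_M f_i\,\gw^n \to \int_M f\,\gw^n$. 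Assembling the three limits through (\ref{Kenergform}) gives $\nu(\phi_i) \to \nu(\phi)$, which is the assertion.

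The main obstacle is the entropy term. Because $\log$ is singular at the origin and the densities may degenerate to zero in the limit, one cannot argue by any uniform continuity of the integrand; the argument must instead route through pointwise (almost everywhere) convergence of the densities together with their uniform bound, so the crux is extracting almost everywhere convergence from the $L^2$ convergence supplied by the regularization. The regularization step itself is the other delicate point: one must ensure the mollified potentials genuinely lie in $\HH$, i.e.\ that $\gw_{\phi_i} > 0$ strictly rather than merely $\geq 0$, and that the volume densities converge strongly rather than only weakly; both are what make the dominated-convergence argument for the entropy available.
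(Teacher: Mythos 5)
Your proposal is correct and follows essentially the same route as the paper: mollify to get a uniformly $C^{1,1}$-bounded sequence converging in $C^{1,\ga}$ and $W^{2,p}$, pass the $I$ and $J$ terms of (\ref{Kenergform}) to the limit via the explicit formulas (\ref{Iformula2}), (\ref{Jformula2}), and handle the entropy term by extracting almost everywhere convergence of the uniformly bounded densities. The only cosmetic difference is that you invoke the bounded convergence theorem directly, whereas the paper reproves it in place via convergence in measure and a splitting of the integral.
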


\begin{lemma} \label{C11lsc} Given $\{\phi_i\} \in \HH^{1,1}$ such that
$\{\phi_i\} \to \phi \in \HH^{1,1}$ in $C^{1,\ga}$, one has
\begin{align*}
\bar{\nu}(\phi) \leq \liminf_{i \to \infty} \nu(\phi_i).
\end{align*}
 \begin{proof} By Lemma \ref{Kenergapprox} we may choose sequences
$\{\phi_i^j\} \in \HH$ converging to $\phi_i$ in the weak $C^{1,1}$ topology
such that
\begin{align*}
 \lim_{j \to \infty} \nu(\phi_i^j) = \nu(\phi_i).
\end{align*}
Since convergence in the weak $C^{1,1}$ topology implies convergence in
$C^{1,\ga}$, which implies convergence in the
distance topology, a simple diagonalization argument yields a sequence
$\phi_i^{j_i}$ converging to $\phi$ in the distance topology, such that
\begin{align*}
 \lim_{i \to \infty} \nu(\phi_i^{j_i}) = \liminf_{i \to \infty} \nu(\phi_i).
\end{align*}
The result follows from the definition of $\bar{\nu}$.
\end{proof}
\end{lemma}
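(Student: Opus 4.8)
The plan is to use Lemma \ref{Kenergapprox} as the bridge between the two notions of $K$-energy in play, and then to diagonalize. Recall that for $\phi_i \in \HH^{1,1}$ the quantity $\nu(\phi_i)$ is the value produced by the Chen formula (\ref{Kenergform}), which a priori differs from $\bar{\nu}(\phi_i)$; the whole point is therefore to convert each such $C^{1,1}$ value back into the value of $\nu$ at a genuinely smooth potential before invoking the defining $\liminf$ for $\bar{\nu}$.

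First I would record a uniform reduction: for \emph{any} sequence $\{\psi_n\} \subset \HH$ with $\psi_n \to \phi$ in the distance topology one has $\bar{\nu}(\phi) \leq \liminf_{n\to\infty} \nu(\psi_n)$. When $\phi \in \bar{\HH} \setminus \HH$ this is immediate from Definition \ref{lscextdefn}, and when $\phi$ is already smooth it follows from the lower semicontinuity of $\bar{\nu}$ (Lemma \ref{lscextlsc}) together with $\bar{\nu} = \nu$ on $\HH$. Thus it suffices to produce one sequence in $\HH$ converging to $\phi$ in distance along which $\nu$ has limit inferior at most $\liminf_i \nu(\phi_i)$.

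Next, after passing to a subsequence I may assume $\lim_i \nu(\phi_i) = \liminf_i \nu(\phi_i) =: L$; this subsequence still converges to $\phi$ in $C^{1,\ga}$, hence in the distance topology by Definition \ref{completiondef} and Lemma \ref{toplemma}. For each fixed $i$, Lemma \ref{Kenergapprox} furnishes $\{\phi_i^j\}_j \subset \HH$ with $\phi_i^j \to \phi_i$ in the weak $C^{1,1}$ topology and $\nu(\phi_i^j) \to \nu(\phi_i)$ as $j \to \infty$. Since weak $C^{1,1}$ convergence forces $C^{1,\ga}$ convergence and hence distance convergence, I can select $j_i$ with $\bar{d}(\phi_i^{j_i}, \phi_i) \leq \tfrac{1}{i}$ and $\brs{\nu(\phi_i^{j_i}) - \nu(\phi_i)} \leq \tfrac{1}{i}$. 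The triangle inequality then gives $\phi_i^{j_i} \to \phi$ in the distance topology, while $\nu(\phi_i^{j_i}) \to L$.

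Finally, the diagonal sequence $\{\phi_i^{j_i}\} \subset \HH$ is a legitimate competitor for the reduction of the first step, so $\bar{\nu}(\phi) \leq \liminf_i \nu(\phi_i^{j_i}) = L = \liminf_i \nu(\phi_i)$, which is the claim. I expect the only real subtlety — exactly the one flagged in the remark following the definition of $\bar{\nu}$ — to lie in not conflating $\nu$ on $\HH^{1,1}$ with $\bar{\nu}$: the $\phi_i$ cannot themselves be fed into the $\liminf$ defining $\bar{\nu}(\phi)$, and Lemma \ref{Kenergapprox} is precisely what repairs this. Everything else is routine diagonalization bookkeeping.
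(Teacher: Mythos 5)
Your proposal is correct and follows essentially the same route as the paper: approximate each $\phi_i$ by smooth potentials via Lemma \ref{Kenergapprox}, diagonalize using that weak $C^{1,1}$ convergence implies distance convergence, and conclude from the definition of $\bar{\nu}$. The only difference is that you explicitly separate the case $\phi \in \HH$ (handled by lower semicontinuity of $\nu$ on $\HH$) from $\phi \in \bar{\HH}\setminus\HH$ (handled by the defining $\liminf$), a point the paper leaves implicit in ``follows from the definition of $\bar{\nu}$''; this is a welcome clarification but not a different argument.
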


\begin{lemma} \label{convext} Given the setup of Lemma \ref{NPCcompl}, let $f :
X \to \mathbb R$ be lower semicontinuous.  If $\bar{f}$ is convex along
geodesics connecting points in $X$, then
$\bar{f}: \bar{X} \to \mathbb R$ is geodesically convex.
\begin{proof} Fix $\bar{x}, \bar{y} \in \bar{X}$, and fix $x_n \to \bar{x}$
a Cauchy sequence in $X$ such that
\begin{align*}
\lim_{n \to \infty} f(x_n) = \bar{f}(\bar{x}).
\end{align*}
Likewise define $y_n$.  These sequences exist by the definition of $\bar{f}$. 
Let $\gg_n : [0,1] \to \bar{X}$ denote the unique geodesic connecting $x_n$ to
$y_n$. 
By the assumed convexity of $\bar{f}$ on geodesics connecting points in $X$ one
obtains
\begin{align*}
\bar{f}(\gg_n(t)) \leq (1-t) \bar{f}(x_n) + t \bar{f}(y_n).
\end{align*}
Taking the limit yields
\begin{align*}
\lim_{n \to \infty} \bar{f}(\gg_n(t)) \leq&\ (1-t) \bar{f}(\bar{x}) + t
\bar{f}(\bar{y}).
\end{align*}
But as in the proof of Lemma \ref{NPCcompl}, we know that $\lim_{n \to \infty}
\gg_n(t) = \bar{\gg}(t)$, where $\bar{\gg} : [0,1] \to \bar{X}$ denotes the
geodesic connecting $\bar{x}$ to $\bar{y}$.  Therefore by lower semicontinuity
of $\bar{f}$ we obtain
\begin{align*}
\bar{f}(\bar{\gg}(t)) \leq&\ \lim_{n \to \infty} \bar{f}(\gg_n(t)) \leq (1-t)
f(\bar{x}) + t
f(\bar{y}).
\end{align*}
Thus $\bar{f}$ is geodesically convex.
\end{proof}
\end{lemma}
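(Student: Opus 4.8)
The plan is to prove this by an inner-approximation argument: I will take two arbitrary points of $\bar{X}$, approximate them by points of $X$ that realize their extended $\bar{f}$-values, apply the assumed convexity to the geodesics joining the approximants (whose endpoints lie in $X$), and then pass to the limit using lower semicontinuity. First I would fix $\bar{x}, \bar{y} \in \bar{X}$ and select recovery sequences. By the very definition of the lower semicontinuous extension $\bar{f}$, there exist Cauchy sequences $\{x_n\}, \{y_n\} \subset X$ with $x_n \to \bar{x}$ and $y_n \to \bar{y}$ in the distance topology and with $f(x_n) \to \bar{f}(\bar{x})$, $f(y_n) \to \bar{f}(\bar{y})$. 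Since $x_n, y_n \in X$, one has $\bar{f}(x_n) = f(x_n)$ and $\bar{f}(y_n) = f(y_n)$, so these sequences realize the extended values at the endpoints.

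Next, let $\gg_n : [0,1] \to \bar{X}$ denote the unique geodesic from $x_n$ to $y_n$. Because both endpoints lie in $X$, the hypothesis that $\bar{f}$ is convex along geodesics connecting points in $X$ applies directly, yielding
\[
\bar{f}(\gg_n(t)) \leq (1-t)\bar{f}(x_n) + t\, \bar{f}(y_n) = (1-t) f(x_n) + t\, f(y_n)
\]
for every $t \in [0,1]$. Letting $n \to \infty$ on the right-hand side gives $(1-t)\bar{f}(\bar{x}) + t\,\bar{f}(\bar{y})$. The two ingredients that let me pass the left-hand side to the limit come from earlier results. From the construction in the proof of Lemma \ref{NPCcompl} I would use that, for each fixed $t$, the points $\gg_n(t)$ form a Cauchy sequence converging to $\bar{\gg}(t)$, where $\bar{\gg} : [0,1] \to \bar{X}$ is the unique geodesic joining $\bar{x}$ to $\bar{y}$. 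Then, invoking the lower semicontinuity of $\bar{f}$ established in Lemma \ref{lscextlsc},
\[
\bar{f}(\bar{\gg}(t)) \leq \liminf_{n \to \infty} \bar{f}(\gg_n(t)) \leq \lim_{n \to \infty} \left[ (1-t) f(x_n) + t\, f(y_n) \right] = (1-t)\bar{f}(\bar{x}) + t\,\bar{f}(\bar{y}),
\]
which is exactly geodesic convexity of $\bar{f}$ on $\bar{X}$.

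The argument is fairly routine once the earlier lemmas are in hand, so I do not expect a serious technical obstacle; the delicate conceptual point is coordinating the two approximation mechanisms. Convexity is only hypothesized for geodesics with endpoints in $X$, which forces the use of genuine points $x_n, y_n \in X$, and these must \emph{simultaneously} realize the extended values $\bar{f}(\bar{x}), \bar{f}(\bar{y})$ — this is precisely why the recovery sequences from the definition of $\bar{f}$ are the right choice rather than arbitrary approximants. The one step that is not automatic in a general metric space is the convergence $\gg_n(t) \to \bar{\gg}(t)$; this genuinely relies on the NPC structure and the uniqueness of limiting geodesics proved in Lemma \ref{NPCcompl}, and it is where the nonpositive-curvature hypothesis does its work.
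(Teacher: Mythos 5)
Your proposal is correct and follows essentially the same route as the paper's proof: recovery sequences realizing the extended values at the endpoints, the assumed convexity along geodesics with endpoints in $X$, the convergence $\gg_n(t) \to \bar{\gg}(t)$ from the construction in Lemma \ref{NPCcompl}, and lower semicontinuity of $\bar{f}$ from Lemma \ref{lscextlsc} to pass to the limit. No substantive differences.
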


\begin{prop} \label{convexityprop} $\bar{\nu} : \bar{\HH} \to \mathbb R$ is
geodesically convex, that
is, given $\phi : [0,1] \to \bar{\HH}$ a geodesic, for all $t \in [0,1]$ one
has
\begin{align} \label{convexity}
 \bar{\nu}(\phi_t) \leq (1-t) \bar{\nu}(\phi_0) + t \bar{\nu}(\phi_1).
\end{align}
\begin{proof} The main step is to show that (\ref{convexity}) holds for $\phi_0,
\phi_1 \in \HH$, in which case
$\phi_t \in \HH^{1,1}$ is the unique $C^{1,1}$ geodesic connecting $\phi_0$ to
$\phi_1$.  The proposition
then follows from Lemma \ref{convext}.  We prove this fact using the ``oval
approximations'' of $C^{1,1}$ geodesics as
introduced in \cite{ChenTian}.  Consider the surface with boundary in the plane
$\mathbb C$,
\begin{align*}
\Sigma^l := [-l,l] \times [0,1] \cup D_{\frac{1}{2}}(-l, \tfrac{1}{2}) \cup
D_{\frac{1}{2}} (l, \tfrac{1}{2})
\end{align*}
where $D_r(s,t)$ denotes the Euclidean disc of radius $r$ around the point
$(s,t)$.  The set $\Sigma^l$ is a ``stadium'' shape, whose boundary consists of
the two lines $(-l,l) \times \{0\}, (-l,l) \times \{1\}$ and two semicircles
denoted $C_{\pm}$.  Also we set $\Sigma^{\infty} := (-\infty,\infty) \times
[0,1]$.  Fix diffeomorphisms $\gg_{\pm} : [0,1] \to C_i$ such that
$\gg_{\pm}(0) = (\pm l,0), \gg_{\pm}(1) = (\pm l,1)$.  Given $\phi_0, \phi_1 \in
\HH$ consider the map
\begin{align*}
\phi^{l} &: \Sigma^l \to \HH\\
&: (0,t) \to \phi_0\\
&: (1,t) \to \phi_1\\
&: (s,t) \to \left( \gg_{\pm}^{-1}(s,t) \right) \psi_1 + \left (1 -
\gg_{\pm}^{-1}(s,t) \right) \psi_0 \qquad \mbox{ for } (s,t) \in C_{\pm}.
\end{align*}
In particular, the boundary value on the two semicircles smoothly interpolates
between $\phi_0$ and $\phi_1$.  Let $\psi^l : \del
\Sigma^{l} \to \HH$ denote the boundary map of $\phi^l$.  Furthermore, for any
$\gd > 0$, by Theorem
\ref{almostsmoothexthm}
there exists a boundary map $\psi^{l,\gd} : \del \Sigma^{l} \to \HH$ and an
almost smooth geodesic $\phi^{l,\gd} : \Sigma^{l} \to \HH$ with this boundary
condition, such that
\begin{align*}
 \brs{\phi^{l,\gd}}_{C^{1,1}} \leq&\ A, \qquad \lim_{\gd \to 0} \max_{\del
\Sigma^{l} \times M} \nm{ \psi^{l,\gd} -
\psi^{l}}{C^{2,\ga}} = 0.
\end{align*}
The constant $A$ above is independent of both $l$ and $\gd$, so in particular
the $K$-energies in the image of $\phi^{l,\gd}$ are uniformly bounded as well.
Let $\nu^{l,\gd}(s,t) := \nu(\phi^{l,\gd}(s,t))$.  By Theorem
\ref{wkKenergyconv}
$\nu^{l,\gd}$ is weakly subharmonic.  Furthermore let
$f^{l,\gd}(s,t) := \nu^{l,\gd}(s,t) - (1-t) \nu(\phi_0) - t
\nu(\phi_1)$.  Certainly $f^{l,\gd}$ is also weakly subharmonic.

Now fix $\gk : \mathbb R \to \mathbb R$ a smooth nonnegative cutoff function
such that
\begin{align*}
\gk \equiv 1 \mbox{ on }& {[-\tfrac{1}{2}, \tfrac{1}{2}]}, \qquad \supp \gk
\subset [-\tfrac{3}{4}, \tfrac{3}{4}].
\end{align*}
Furthermore set
\begin{align*}
 \gk^{m}(s) =&\ \frac{\gk \left( \frac{s}{m} \right)}{\bar{\gk}}, \qquad
\bar{\gk} := \int_{-\infty}^{\infty} \gk(s) ds.
\end{align*}
The set up is summarized in Figure \ref{fig2}.

\begin{figure}[h]
\begin{tikzpicture}[scale=1.5]

\node (v1) at (-5.6,1.4) {};
\node (v2) at (1.81,1.4) {};
\draw  (v1) edge (v2);

\node (v3) at (-5.6,3.0) {};
\node (v4) at (1.81,3.0) {};
\draw  (v3) edge (v4);

\draw (-5.47,3.0) arc [radius=0.8, start angle=90, end angle = 270];
\draw (1.68,1.4) arc [radius=0.8, start angle=-90, end angle = 90];

\node (v5) at (2.4275,1.1907) {};
\node (v6) at (2.9536,1.1874) {};
\draw  [<->](v5) edge (v6);
\node at (2.6853,1.0735) {\tiny{$s$}};

\node (v7) at (2.9618,1.7625) {};
\node (v8) at (2.9571,1.186) {};
\draw [<->] (v7) edge (v8);
\node at (3.0837,1.4625) {\tiny{$t$}};

\node (v9) at (-4,3.12) {};
\node (v10) at (-4,1.28) {};
\node (v11) at (0.2,3.12) {};
\node (v12) at (0.2,1.28) {};

\draw (v9) edge (v10);
\draw  (v11) edge (v12);
\node at (-3.5,2.8) {\tiny{$\supp \kappa^m$}};

\node at (-3.991,1.23) {\tiny{$(-m,0)$}};
\node at (-5.52,3.16) {\tiny{$(-l,1)$}};
\node at (-5.52,1.23) {\tiny{$(-l,0)$}};
\node at (-4.0262,3.16) {\tiny{$(-m,1)$}};

\node at (0.1355,1.23) {\tiny{$(m,0)$}};
\node at (1.6891,3.16) {\tiny{$(l,1)$}};
\node at (1.6821,1.23) {\tiny{$(l,0)$}};
\node at (0.1707,3.16) {\tiny{$(m,1)$}};

\node at (-1.7907,1.075) {\tiny{$\phi^l \equiv \phi_0$}};
\node (v13) at (-2.2,1.25) {};
\node (v14) at (-1.4,1.25) {};
\draw  [<->] (v13) edge (v14);

\node at (-1.7907,3.3) {\tiny{$\phi^l \equiv \phi_1$}};
\node (v15) at (-2.2,3.15) {};
\node (v16) at (-1.4,3.15) {};
\draw  [<->] (v15) edge (v16);

\draw [fill=black] (-5.51,1.40) circle (0.02);
\draw [fill=black] (-5.51,3.00) circle (0.02);
\draw [fill=black] (-4.00,3.00) circle (0.02);
\draw [fill=black] (-4.00,1.40) circle (0.02);

\draw [fill=black] (0.1988,1.40) circle (0.02);
\draw [fill=black] (0.1988,3.00) circle (0.02);
\draw [fill=black] (1.6891,3.00) circle (0.02);
\draw [fill=black] (1.6821,1.4) circle (0.02);

\node at (-6.45,2.2) {\tiny{$C_-$}};
\node at (2.7,2.2) {\tiny{$C_+$}};
\end{tikzpicture}
\caption{Setup of $\Sigma_l$}
\label{fig2}
\end{figure}

\noindent For $\ge > 0$ let $F^{l,\gd,\ge}$ denote a smooth subharmonic function
on $\Sigma^{l}$ such that
\begin{align*}
 \lim_{\ge \to 0} F^{l,\gd,\ge} = f^{l,\gd} =: F^{l,\gd,0}
\end{align*}
uniformly in ${C^1(\Sigma^{l})}$.  This can be achieved by extending $f^{l,\gd}$
continuously from the boundary and using mollifiers.  Lastly, set
\begin{align*}
 W^{l,m,\gd,\ge}(t) := \int_{-\infty}^{\infty} \gk^m(s) F^{l,\gd,\ge}(s,t)
ds.
\end{align*}
Certainly by construction $W^{l,m,\gd,\ge}$ is a $C^{2,\ga}$ function of $t$,
and so
we compute
\begin{align*}
 \frac{d^2 W^{l,m,\gd,\ge}}{dt^2} =&\ \int_{-\infty}^{\infty} \gk^{m}(s)
\frac{\del^2
F^{l,\gd,\ge}}{\del t^2} (s,t) ds\\
=&\ \int_{-\infty}^{\infty} \gk^{m}(s) \left[ \gD_{s,t} F^{l,\gd,\ge} -
\frac{\del^2
F^{l,\gd,\ge}}{\del s^2} \right](s,t) ds\\
\geq&\ - \int_{-\infty}^{\infty} \gk^{m}(s) \left[ \frac{\del^2
F^{l,\gd,\ge}}{\del
s^2}(s,t) \right] ds\\
=&\ - \int_{-\infty}^{\infty} \frac{d^2 \gk^m}{ds^2} F^{l,\gd,\ge}(s,t) ds\\
=&\ - \frac{1}{\bar{\gk} m^2} \int_{-\infty}^{\infty} \frac{d^2
\gk}{ds^2}\left(\frac{s}{m} \right) F^{l,\gd,\ge}(s,t) ds.
\end{align*}
As noted above $\brs{f^{l,\gd}(s,t)}$ has a bound independent of $l,\gd,s,t$,
and
thus $F^{l,\gd,\ge}$ has a uniform bound independent of $l,\gd,\ge,s,t$.  Thus
\begin{align*}
\frac{d^2 W^{l,m\gd,\ge}}{dt^2} \geq&\ - \frac{C}{\bar{\gk}m^2}
\int_{-\infty}^{\infty} \brs{\frac{d^2 \gk}{ds^2}} \left(\frac{s}{m} \right)
ds\\
=&\ - \frac{C}{\bar{\gk} m} \int_{-\infty}^{\infty} \brs{\frac{d^2 \gk}{d
s^2}}(s) ds\\
\geq&\ - \frac{C}{m}.
\end{align*}
With this estimate on the second derivative and the fact that the boundary
values of $W$ are $o(\gd) + o(\ge)$, by elementary calculus arguments we
conclude that,
for any $t \in [0,1]$,
\begin{align*}
 W^{l,m,\gd,\ge}(t) \leq&\ \frac{C}{m} + o(\gd) + o(\ge).
\end{align*}
Due to the uniform convergence of $F^{l,\gd,\ge}$ to $F^{l,\gd,0}$ as $\ge \to
0$ we may send $\ge$ to $0$ in this estimate to yield
\begin{align} \label{convpf50}
 W^{l,m,\gd,0}(t) \leq \frac{C}{m} + o(\gd).
\end{align}
Since the $\phi^{l,\gd}$ satisfy a uniform $C^{1,1}$ bound, and moreover
solutions to the boundary value problem are unique, it follows that
\begin{align*}
 \sup_{\Sigma^{l} \times M} \nm{\phi^{l} - \phi^{l,\gd}}{C^{1,\ga}} =
o(\gd).
\end{align*}
In particular this convergence implies convergence in the distance topology. 
Furthermore, by (\cite{ChenSOKM3} Proposition 4.6), $\phi^{l}$ converges to
$\phi_t$, the $C^{1,1}$ geodesic connecting $\phi_0$ to $\phi_1$,
in $C^{1,\ga}$ on any fixed compact subset of $\Sigma^{\infty} \times M$.  In
particular, this convergence holds on $\Sigma^{m} \times M$ for any given $m$. 
Thus, given arbitrary $\gg > 0$ we may choose $l$ sufficiently large and $\gd$
sufficiently small so that for all $s,t \in \Sigma^m$, $d(\phi^{l,\gd}(s,t),
\phi_t) \leq \gg$.  Therefore by Lemma \ref{C11lsc} we obtain for
these
choices and all $s,t \in \Sigma^m$,
\begin{align*}
 f^{l,\gd}(s,t) =&\ \nu^{l,\gd}(s,t) - (1-t) \nu(\phi_0) - t \nu(\phi_1)\\
\geq&\ \bar{\nu}(\phi_t) - o(\gg) - (1-t) \nu(\phi_0) - t \nu(\phi_1).
\end{align*}
By the definition of $W$ this implies
\begin{align*}
 W^{l,m,\gd,0} \geq&\ \bar{\nu}(\phi_t) - o(\gg) - (1-t) \nu(\phi_0) - t
\nu(\phi_1).
\end{align*}
Combining this with (\ref{convpf50}) yields
\begin{align*}
 \bar{\nu}(\phi_t) \leq&\ (1-t) \nu(\phi_0) + t \nu(\phi_1) + o(\gg) + o(\gd)
+
\frac{C}{m}.
\end{align*}
Due to the uniform convergence discussed above on $\Sigma^{m}$, we first send
$l$ to infinity and $\gd$ to $0$ (which implies $\gg \to 0$ as discussed above)
to yield
\begin{align*}
 \bar{\nu}(\phi_t) \leq&\ (1-t) \nu(\phi_0) + t \nu(\phi_1) + \frac{C}{m} =
(1-t) \bar{\nu}(\phi_0) + t \bar{\nu}(\phi_1) + \frac{C}{m}.
\end{align*}
Taking the limit as $m \to \infty$ yields the result.
\end{proof}
\end{prop}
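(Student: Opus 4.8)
The plan is to reduce the statement to the dense subspace $\HH$ and then to obtain the convexity as a one-dimensional shadow of the subharmonicity of the $K$-energy. By Lemma \ref{convext} it suffices to verify the inequality (\ref{convexity}) when the endpoints $\phi_0,\phi_1$ lie in $\HH$, so that $\phi_t$ is the unique $C^{1,1}$ geodesic joining them; the case of general endpoints in $\bar{\HH}$ follows from that lemma together with the lower semicontinuity of $\bar{\nu}$. The essential difficulty is that $\phi_t$ is only $C^{1,1}$, so one cannot simply differentiate twice and invoke the smooth convexity of Lemma \ref{Kenergyconv}.

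To circumvent the lack of regularity, I would interpret the geodesic as the boundary trace of a solution of the homogeneous complex Monge--Amp\`ere equation (\ref{HCMA}) and use Chen--Tian's theorem (Theorem \ref{wkKenergyconv}) that the induced $K$-energy is weakly subharmonic along partially smooth solutions. The point is that a function of $t$ which is the restriction of a subharmonic function on a two-dimensional strip, and which is affinely bounded on the boundary, must satisfy the convexity estimate up to a controllable error. Since the $C^{1,1}$ geodesic need not itself be partially smooth, I would not work with it directly but instead approximate.

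Concretely, the steps I would carry out are as follows. First, I would build the ``oval'' domains $\Sigma^l$: a flat strip $[-l,l]\times[0,1]$ capped by two semicircular ends, with boundary data prescribed to equal $\phi_0$ on the side $t=0$, $\phi_1$ on the side $t=1$, and to interpolate smoothly on the caps. Applying Theorem \ref{almostsmoothexthm} yields almost smooth solutions $\phi^{l,\delta}$ of (\ref{HCMA}) with a $C^{1,1}$ bound uniform in $l$ and $\delta$, so the $K$-energies along them are uniformly bounded. Second, Theorem \ref{wkKenergyconv} makes $\nu^{l,\delta}:=\nu\circ\phi^{l,\delta}$ weakly subharmonic on $\Sigma^l$, hence so is $f^{l,\delta}(s,t):=\nu^{l,\delta}(s,t)-(1-t)\nu(\phi_0)-t\nu(\phi_1)$. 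Third, after mollifying $f^{l,\delta}$ to a genuinely subharmonic $F^{l,\delta,\epsilon}$, I would average against a normalized cutoff $\kappa^m(s)$ supported in the flat part of the strip to form $W^{l,m,\delta,\epsilon}(t)$; integrating by parts moves the $s$-derivatives onto $\kappa^m$, and using subharmonicity together with the uniform bound on $f^{l,\delta}$ gives $\frac{d^2 W}{dt^2}\geq -C/m$. Since the boundary values of $W$ are small, elementary calculus forces $W(t)\leq C/m + o(\delta)+o(\epsilon)$. Finally, sending $\epsilon\to 0$, then letting $l\to\infty$ and $\delta\to 0$ so that $\phi^{l,\delta}\to\phi_t$ in $C^{1,\alpha}$ on the flat region, and invoking the one-sided lower semicontinuity of Lemma \ref{C11lsc} to bound $\bar{\nu}(\phi_t)$ from above by $\liminf\nu^{l,\delta}$, I would obtain $\bar{\nu}(\phi_t)\leq(1-t)\nu(\phi_0)+t\nu(\phi_1)+C/m$; letting $m\to\infty$ closes the $\HH$ case.

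The main obstacle is engineering the limit so that subharmonicity, which is only available for the approximate almost smooth geodesics, survives the passage to the genuinely singular $C^{1,1}$ geodesic. The oval construction is exactly what controls this: it confines the unavoidable singular behavior of the almost smooth solutions to the artificial boundary data on the semicircular caps, while the flat strip converges to the true geodesic, and the cutoff $\kappa^m$ localizes all the analysis to this good region. Matching the uniform $C^{1,1}$ bounds (needed to keep the $K$-energies bounded and to justify interchanging limits) against the fact that Lemma \ref{C11lsc} yields only an inequality, in the favorable direction, is the delicate bookkeeping that makes the estimate close.
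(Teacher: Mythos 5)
Your proposal follows the paper's argument essentially step for step: reduction to endpoints in $\HH$ via Lemma \ref{convext}, the oval domains $\Sigma^l$ with almost smooth approximations from Theorem \ref{almostsmoothexthm}, weak subharmonicity from Theorem \ref{wkKenergyconv}, mollification and averaging against $\kappa^m$ to get the $-C/m$ convexity defect, and the final passage of limits using Lemma \ref{C11lsc}. The approach and all the key ingredients coincide with the paper's proof, so there is nothing to add.
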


\subsubsection{Main Proof}

\begin{proof}[Proof of Theorem \ref{fundexistthm}] Fix $(M^{2n}, \gw, J)$ a
compact K\"ahler manifold.  By Lemmas \ref{triangleineq} and \ref{NPCcompl} it
follows that
$(\bar{\HH}, \bar{d})$ is an
NPC space.  Furthermore, by Lemmas \ref{lscextlsc} and \ref{Kenerglsc} the
function $\bar{\nu} : \bar{\HH} \to \mathbb R$ is lower semicontinuous.  By
Proposition \ref{convexityprop} $\bar{\nu}$ is geodesically convex.
Fix $\phi \in \HH$.  By Theorem \ref{Kenergydecay} we conclude that
\begin{align*}
\liminf_{\psi \in \HH, d(\phi,\psi) \to \infty}
\frac{\nu(\psi)}{d(\phi,\psi)^2} \geq&\ \liminf_{\psi \in \HH,
d(\phi,\psi) \to \infty} \frac{\nu(\phi) - d(\phi,\psi)
\sqrt{\CC(\phi)}}{d(\phi,\psi)^2} = 0.
\end{align*}
By definition this inequality passes to $\bar{\nu}$, and therefore in the
notation of Theorem \ref{Mayer} we have shown $A = 0$.  The theorem follows from
Theorem \ref{Mayer}.
\end{proof}

\subsection{Further properties of minimizing movements} \label{furtherprops}

The theory of minimizing movements comes with a host of a priori regularity
results which seek to exhibit the manner in which these can be thought of as
gradient flows.  We record some of these results here as immediate corollaries
of results in \cite{Mayer}.  First, one can further characterize the paths of
Theorem \ref{Mayer} as curves of
``steepest descent.''  

\begin{defn} Given $(X, d)$ a complete NPC space and $f : X \to \mathbb R$ a
lower semicontinuous function, for $x \in X$ let
\begin{align*}
 \brs{\N_- f}(x) =&\ \max \left\{ \limsup_{y \to x} \frac{f(x) - f(y)}{d(x,y)},
0 \right\}.
\end{align*}
\end{defn}

\begin{thm} (\cite{Mayer} Theorem 2.14) Given the setup of Theorem \ref{Mayer},
if $x(t_0)$ is not a stationary point for $f$, then
 \begin{align*}
  \lim_{t \to t_0^+} \frac{f(x(t_0)) - f(x(t))}{d(x(t), x(t_0))} = \brs{\N_-
f}(x(t_0)).
 \end{align*}
Moreover, for $t_0 > 0$ this limit is finite.
\end{thm}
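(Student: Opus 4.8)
The plan is to establish the two one-sided inequalities separately, namely
\begin{align*}
\limsup_{t \to t_0^+} \frac{f(x(t_0)) - f(x(t))}{d(x(t),x(t_0))} \leq \brs{\N_- f}(x(t_0)) \leq \liminf_{t \to t_0^+} \frac{f(x(t_0)) - f(x(t))}{d(x(t),x(t_0))}.
\end{align*}
The upper bound is essentially immediate. Iterating the dissipation estimate (\ref{distcntrl10}) shows $f$ is nonincreasing along the flow, so for $t \geq t_0$ the numerator $f(x(t_0)) - f(x(t))$ is nonnegative, while $x(t) \to x(t_0)$ as $t \to t_0^+$ by continuity of the limiting path (a locally Lipschitz path for $t > 0$ by the regularity theory of \cite{Mayer}). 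Since $x(t_0)$ is not stationary the path genuinely moves, so $d(x(t),x(t_0)) > 0$ for $t$ slightly larger than $t_0$, and the difference quotients form a subfamily of those defining the slope $\brs{\N_- f}(x(t_0))$; taking $\limsup$ gives the upper bound.

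For the lower bound I would first extract two discrete estimates from the defining minimization of the resolvent $x_{i+1} = W_h(x_i)$, i.e. from $\tfrac{1}{2h} d^2(x_i, x_{i+1}) + f(x_{i+1}) \leq \tfrac{1}{2h} d^2(x_i, y) + f(y)$ for all $y$. Taking $y = x_i$ recovers the dissipation $\tfrac{1}{2h} d^2(x_i, x_{i+1}) \leq f(x_i) - f(x_{i+1})$ of (\ref{distcntrl10}). Comparing with a general competitor $y$ and using $d^2(x_i,y) - d^2(x_i,x_{i+1}) \leq (d(x_i,x_{i+1}) + d(x_i,y))\, d(x_{i+1},y)$ from the triangle inequality yields, after dividing by $d(x_{i+1},y)$ and letting $y \to x_{i+1}$, the slope bound $\brs{\N_- f}(x_{i+1}) \leq d(x_i,x_{i+1})/h$. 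Together these say that, up to the step size, the discrete speed dominates both the energy drop and the slope.

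The core of the argument is to promote these discrete facts to the statement that $x(t)$ is a curve of maximal slope. Summing the slope bound against the dissipation controls $\int \brs{\N_- f}^2(x(r))\, dr$; sharpening the dissipation via De Giorgi's variational interpolation, i.e. examining the whole curve $s \mapsto W_s(x_i)$ of resolvents for $s \in (0,h]$ rather than only its endpoint $x_{i+1} = W_h(x_i)$, recovers in the limit of vanishing step size the full energy identity $f(x(s)) - f(x(t)) = \tfrac12 \int_s^t \brs{\dot x}^2(r)\, dr + \tfrac12 \int_s^t \brs{\N_- f}^2(x(r))\, dr$, where $\brs{\dot x}$ denotes the metric speed. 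Since $B$-convexity makes $\brs{\N_- f}$ a (strong) upper gradient, one also has $f(x(s)) - f(x(t)) \leq \int_s^t \brs{\N_- f}(x(r)) \brs{\dot x}(r)\, dr$, and equality in the intervening Young inequality forces $\brs{\dot x}(r) = \brs{\N_- f}(x(r))$ for almost every $r$ together with $-\tfrac{d}{dr} f(x(r)) = \brs{\N_- f}^2(x(r))$. I would then invoke $B$-convexity a second time, in its sharper consequences, to upgrade these almost-everywhere identities to genuine one-sided derivatives at the single time $t_0 > 0$: convexity renders $r \mapsto \brs{\N_- f}(x(r))$ nonincreasing and right-continuous, so that $\lim_{t \to t_0^+} d(x(t),x(t_0))/(t - t_0) = \brs{\N_- f}(x(t_0))$ and $\lim_{t \to t_0^+} (f(x(t_0)) - f(x(t)))/(t - t_0) = \brs{\N_- f}^2(x(t_0))$.

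Granting this, the theorem follows by factoring
\begin{align*}
\frac{f(x(t_0)) - f(x(t))}{d(x(t),x(t_0))} = \frac{f(x(t_0)) - f(x(t))}{t - t_0} \cdot \frac{t - t_0}{d(x(t),x(t_0))},
\end{align*}
and letting $t \to t_0^+$: the first factor tends to $\brs{\N_- f}^2(x(t_0))$ and the second to $1/\brs{\N_- f}(x(t_0))$, a division that is legitimate precisely because $x(t_0)$ is non-stationary, i.e. $\brs{\N_- f}(x(t_0)) > 0$. Finiteness for $t_0 > 0$ is the regularizing effect of the flow: by monotonicity of the slope and the integrated dissipation, $t_0\, \brs{\N_- f}^2(x(t_0)) \leq \int_0^{t_0} \brs{\N_- f}^2(x(r))\, dr \leq f(x_0) - f(x(t_0)) < \infty$, so $\brs{\N_- f}(x(t_0))$ is finite even when the slope at $x_0$ is not. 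I expect the main obstacle to be the sharp dissipation estimate, since recovering the correct factor $\tfrac12$ on \emph{both} the speed and slope terms in the limit requires the De Giorgi interpolation rather than the crude per-step bound, together with the use of convexity to pass from almost-everywhere derivatives to one-sided derivatives at the specific time $t_0$.
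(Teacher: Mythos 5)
The paper does not prove this statement at all: it is imported verbatim as a citation of Mayer's Theorem 2.14, so there is no in-paper argument to compare against. Your proposal is a genuine proof sketch along the lines of the Ambrosio--Gigli--Savar\'e theory of curves of maximal slope (energy--dissipation equality via De Giorgi's variational interpolation, the Young-inequality rigidity forcing $\brs{\dot x}(r) = \brs{\N_- f}(x(r))$ a.e., and monotonicity plus lower semicontinuity of the slope along the flow to upgrade a.e.\ identities to one-sided derivatives at every $t_0>0$), and the strategy is sound: the upper bound is indeed immediate since the curve's difference quotients are a subfamily of those defining the slope, and the final factorization through $t-t_0$ is legitimate precisely because non-stationarity gives $\brs{\N_- f}(x(t_0))>0$. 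The finiteness argument $t_0 \brs{\N_- f}^2(x(t_0)) \leq \int_0^{t_0}\brs{\N_- f}^2(x(r))\,dr \leq f(x_0)-f(x(t_0))$ is also correct. What this buys over the paper's citation is an actual self-contained route through machinery that postdates Mayer but is cleaner and more general ($\lambda$-convex functionals on general complete metric spaces, not just NPC spaces).

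One step deserves to be made explicit, since as written it is hidden in a ``so that'': the claim $\lim_{t\to t_0^+} d(x(t),x(t_0))/(t-t_0) = \brs{\N_- f}(x(t_0))$. The inequality $\limsup$ follows from $d(x(t),x(t_0)) \leq \int_{t_0}^t \brs{\dot x}(r)\,dr$ and right-continuity of the slope, but the matching $\liminf$ requires converting the energy drop into a lower bound on the \emph{chordal distance} rather than the length of the curve. This is where one must invoke the standard identity that for a geodesically convex lower semicontinuous functional the local slope coincides with the global slope, $\brs{\N_- f}(x) = \sup_{y\neq x} \max\{f(x)-f(y),0\}/d(x,y)$, which gives $d(x(t_0),x(t)) \geq (f(x(t_0))-f(x(t)))/\brs{\N_- f}(x(t_0))$ and hence the liminf bound after dividing by $t-t_0$. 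This lemma is available in your setting (it is exactly where $B$-convexity with $B=0$ enters), so it is a sketch-level omission rather than a gap, but without it the argument only bounds the arclength from below and the factorization at the end would not close.
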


\begin{thm} Let $(M^{2n}, \gw, J)$ be a compact K\"ahler manifold.  Let $u_t :
[0, \infty) \to \bar{\HH}$ be a K-energy minimizing movement as in Theorem
\ref{fundexistthm}.  Then
\begin{align*}
\lim_{s \to 0^+} \frac{d(u_{t+s}, u_t)}{s} = \brs{\N_- \bar{\nu}}(u_t).
\end{align*}
\begin{proof} This is an immediate corollary of (\cite{Mayer} Theorem 2.17) 
\end{proof}
\end{thm}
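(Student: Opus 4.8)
The plan is to recognize this statement as a special case of the general principle in metric-space gradient flow theory that the metric speed of a curve of steepest descent coincides with the downward local slope of the driving functional, and then to invoke Mayer's Theorem 2.17 directly. The companion theorem just stated (\cite{Mayer} Theorem 2.14) already records the ``spatial'' half of this principle, identifying $\brs{\N_- f}$ with the rate of decrease of $f$ per unit distance travelled; the present statement is the ``temporal'' half, and both hold under the identical set of hypotheses on $(\bar{\HH}, \bar{d}, \bar{\nu})$.

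First I would observe that every hypothesis required to run Mayer's machinery on $(\bar{\HH}, \bar{d}, \bar{\nu})$ has already been verified in the course of proving Theorem \ref{fundexistthm}. Specifically, $(\bar{\HH}, \bar{d})$ is a complete NPC space by Lemmas \ref{triangleineq} and \ref{NPCcompl}; the functional $\bar{\nu}$ is lower semicontinuous by Lemmas \ref{lscextlsc} and \ref{Kenerglsc}; it is geodesically convex, hence $B$-convex with $B = 0$, by Proposition \ref{convexityprop}; and the decay estimate of Theorem \ref{Kenergydecay} forces the constant $A$ of Theorem \ref{Mayer} to vanish. Consequently the full conclusion of Theorem \ref{Mayer}, and with it the finer regularity results of \cite{Mayer} \S 2, applies to $\bar{\nu}$ on $\bar{\HH}$.

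Next I would identify the minimizing movement $u_t$ of Theorem \ref{fundexistthm} with the canonical curve $x(t) = \lim_{n \to \infty} W_{\frac{t}{n}}^n(u_0)$ produced by Theorem \ref{Mayer}. This identification is exactly how $u_t$ was constructed, so Mayer's Theorem 2.17 — which asserts precisely that $\lim_{s \to 0^+} \bar{d}(x(t+s), x(t))/s = \brs{\N_- f}(x(t))$ for this curve — yields the claim upon substituting $f = \bar{\nu}$. For completeness I would indicate the mechanism behind Mayer's result: the resolvent inequality $\frac{1}{2h} \bar{d}^2(x_{i+1}, x_i) \leq \bar{\nu}(x_i) - \bar{\nu}(x_{i+1})$ recorded in (\ref{distcntrl10}), summed along a discrete Calabi flow and passed to the limit as the step size tends to zero, produces an energy-dissipation inequality for $u_t$ whose two contributions are controlled respectively by the metric speed and by the slope $\brs{\N_- \bar{\nu}}$; the convexity of $\bar{\nu}$ then upgrades this inequality to the sharp energy-dissipation identity, which is equivalent to equality of the metric speed and the slope.

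The step needing the most care is the identification in the previous paragraph: Definition \ref{GMM} permits minimizing movements built from arbitrary partitions, whereas the curve of Theorem \ref{Mayer} arises from the uniform step sizes $\frac{t}{n}$. The resolution, as noted in the remark following Definition \ref{GMM}, is that the solution furnished by Theorem \ref{fundexistthm} is by construction the uniform-step limit, so it genuinely equals $x(t)$ and no ambiguity arises. One should also keep in mind that the right-hand slope $\brs{\N_- \bar{\nu}}(u_t)$ may be $+\infty$ at $t = 0$, so the identity carries its full force for $t > 0$, consistent with the finiteness clause in the companion Theorem 2.14.
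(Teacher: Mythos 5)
Your proposal is correct and takes the same route as the paper, which simply cites Mayer's Theorem 2.17 after having verified (in the proof of Theorem \ref{fundexistthm}) that $(\bar{\HH},\bar{d})$ is a complete NPC space and that $\bar{\nu}$ is lower semicontinuous and geodesically convex with $A=0$. Your additional remarks on identifying $u_t$ with the uniform-step limit $\lim_{n\to\infty} W_{t/n}^n(u_0)$ and on the mechanism behind Mayer's result are accurate elaborations of the same argument rather than a different approach.
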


\begin{thm} Let $(M^{2n}, \gw, J)$ be a compact K\"ahler manifold.  Let $u_t :
[0, \infty) \to \bar{\HH}$ be a K-energy minimizing movement as in Theorem
\ref{fundexistthm}.  Then for almost all $t > 0$ one has
\begin{align*}
\frac{d \bar{\nu}(u_t)}{dt} = -  \brs{\N_- \bar{\nu}}^2(u_t).
\end{align*}
\begin{proof} This is an immediate corollary of (\cite{Mayer} Corollary 2.18) 
\end{proof}
\end{thm}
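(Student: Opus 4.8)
The plan is to derive the identity by combining, via a metric chain rule, the two characterizations established immediately above: the steepest-descent property (\cite{Mayer} Theorem 2.14) and the identification of the metric speed with the slope (\cite{Mayer} Theorem 2.17). All the structural hypotheses needed have already been checked in the proof of Theorem \ref{fundexistthm}: $(\bar{\HH}, \bar{d})$ is a complete NPC space, $\bar{\nu}$ is lower semicontinuous and convex (hence $B$-convex with $B = 0$), and $A = 0$. Thus $u_t$ is precisely a curve of the type furnished by Theorem \ref{Mayer}, and these two results apply to it verbatim.

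First I would record that $t \mapsto \bar{\nu}(u_t)$ is nonincreasing: by the semigroup property of the flow map the tail $s \mapsto u_{t+s}$ is itself the minimizing movement with initial condition $u_t$, so (\ref{solnprop2}) gives $\bar{\nu}(u_{t+s}) \leq \bar{\nu}(u_t)$ for all $s \geq 0$. A monotone real-valued function is differentiable almost everywhere, so for a.e.\ $t > 0$ the derivative $\frac{d}{dt}\bar{\nu}(u_t)$ exists. At such a $t$ I would compute the right derivative by factoring the difference quotient: for small $s > 0$ with $u_{t+s} \neq u_t$,
\begin{align*}
\frac{\bar{\nu}(u_t) - \bar{\nu}(u_{t+s})}{s} = \frac{\bar{\nu}(u_t) - \bar{\nu}(u_{t+s})}{\bar{d}(u_{t+s}, u_t)} \cdot \frac{\bar{d}(u_{t+s}, u_t)}{s}.
\end{align*}
The first factor tends to $\brs{\N_- \bar{\nu}}(u_t)$ by the steepest-descent property and the second to the same quantity by the metric-speed identity, so the right derivative of $\bar{\nu}(u_t)$ equals $-\brs{\N_- \bar{\nu}}^2(u_t)$; since the two-sided derivative exists it must agree with this value. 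When $u_t$ is stationary both sides vanish and the curve is momentarily constant, so this case is immediate.

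The delicate point, and the reason the chain-rule heuristic is not quite a proof on its own, is the legitimacy of passing to the limit in the product above: one must know that the two one-sided limits exist simultaneously and factor compatibly, rather than merely along subsequences. This uniform control of the difference quotients along the discrete approximations, before the step size $\tau \to 0$, is exactly what is packaged in (\cite{Mayer} Corollary 2.18), so the most efficient route is to invoke that corollary directly for the curve $u_t$, which yields the stated energy-dissipation identity for almost every $t > 0$.
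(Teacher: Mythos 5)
Your proposal ends exactly where the paper does: the result is obtained by invoking Mayer's Corollary 2.18 directly for the curve $u_t$, whose hypotheses were verified in the proof of Theorem \ref{fundexistthm}. The chain-rule heuristic you sketch (factoring the difference quotient through the steepest-descent and metric-speed identities) is a sound motivation, and your recognition that the rigorous passage to the limit is what the cited corollary supplies matches the paper's one-line proof.
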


The construction of minimizing movements also allows one to derive interesting
properties of the flow map on $\bar{\HH}$.

\begin{defn} Let $(M^{2n}, \gw, J)$ be a compact K\"ahler manifold.  The
\emph{K-energy flow map} is
\begin{align*}
F :&\ \bar{\HH} \times [0, \infty) \to \bar{\HH}\\
:&\ (u_0, t) \to u_t,
\end{align*}
where $u_t$ is the minimizing movement with initial condition $u_0$
guaranteed by Theorem \ref{fundexistthm}.
\end{defn}

\begin{thm} \label{Fprops} (\cite{Mayer} Theorem 2.2, Corollary 2.3, Theorem
2.5, Corollary
2.6)
\begin{enumerate}
\item{Given $u_0 \in \bar{\HH}$ with $\bar{\nu}(u_0) < \infty$, the map $t \to
F_t(u_0)$ is uniformly H\"older continuous of exponent $\frac{1}{2}$.  More
specifically, there exists a constant $C$ such that for any $0 \leq s \leq t$,
one has
\begin{align*}
d(F_s(u_0), F_t(u_0)) \leq C (t-s)^{\frac{1}{2}}.
\end{align*}}
\item{The map $F : \{ \bar{\phi} \in \bar{\HH} | \bar{\nu}(\bar{\phi}) < \infty
\}
\times [0, \infty) \to \bar{\HH}$ is continuous.}
\item{$F$ satisfies the semigroup property, i.e. for $s,t \geq 0$ one has
$F_{s+t} = F_s \circ F_t$.}
\item{Given $u_0 \in \bar{\HH}$ with $\bar{\nu}(u_0) < \infty$, the map $t \to
\bar{\nu}(F_t(u_0))$ is nonincreasing.}
\end{enumerate}
\end{thm}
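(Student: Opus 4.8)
The plan is to observe that all four assertions are direct transcriptions of Mayer's abstract results to the concrete setting $(\bar{\HH}, \bar{d}, \bar{\nu})$, so the main task is to confirm that the hypotheses already verified during the proof of Theorem \ref{fundexistthm} place us squarely in the framework of \cite{Mayer}. Specifically, Lemmas \ref{triangleineq} and \ref{NPCcompl} show that $(\bar{\HH}, \bar{d})$ is a complete NPC space, Lemmas \ref{lscextlsc} and \ref{Kenerglsc} give lower semicontinuity of $\bar{\nu}$, and Proposition \ref{convexityprop} gives geodesic convexity, so that $\bar{\nu}$ is $B$-convex with $B = 0$. Crucially, the Calabi-energy decay estimate of Theorem \ref{Kenergydecay} forces $A = 0$, whence $I_A = (0,\infty)$ and the flow map $F$ is defined for all $t \geq 0$. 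With these structural facts in hand each cited result of Mayer applies verbatim.

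For part (1), I would extract the $\tfrac12$-H\"older estimate from the energy-dissipation inequality underlying Lemma \ref{distcntrl}. Along a discrete flow $x_{i+1} = W_h(x_i)$ the resolvent inequality gives $\tfrac{1}{2h} \bar{d}^2(x_{i+1}, x_i) \leq \bar{\nu}(x_i) - \bar{\nu}(x_{i+1})$; summing over an interval of $N$ steps and applying Cauchy--Schwarz as in (\ref{distcntrl20}) yields $\bar{d}^2(x_0, x_N) \leq 2 N h \, (\bar{\nu}(x_0) - \bar{\nu}(x_N))$. Since $Nh$ approximates the elapsed time and $\bar{\nu}$ is bounded below on bounded sets (again by Theorem \ref{Kenergydecay}), passing to the limit in (\ref{solnprop1}) and applying the estimate on an arbitrary subinterval $[s,t]$ produces a constant $C$, depending only on $\bar{\nu}(u_0)$ and the distance to the basepoint, with $\bar{d}(F_s(u_0), F_t(u_0))^2 \leq C(t-s)$. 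This is precisely the content of \cite{Mayer} Theorem 2.2.

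For the remaining parts I would invoke the corresponding abstract statements. Continuity of $F$ in part (2) combines the temporal regularity of part (1) with continuity in the spatial variable; the latter follows from the contraction property $\bar{d}(F_t(u_0), F_t(v_0)) \leq \bar{d}(u_0, v_0)$ of Theorem \ref{flowcontr1}, itself a consequence of convexity of the distance function on an NPC space, or directly from Mayer's stability estimates. The semigroup property (3) follows from uniqueness of the limit in (\ref{solnprop1}) together with the iterative structure $W^n_{t/n}$ of the resolvent, exactly as in \cite{Mayer} Theorem 2.5. Monotonicity (4) is immediate from (\ref{solnprop2}): each resolvent step decreases $\FF_{\phi,\tau}$ and hence $\bar{\nu}$ along the discrete flow, and this monotonicity passes to the limiting path using lower semicontinuity of $\bar{\nu}$.

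The main obstacle is not any single computation but rather the bookkeeping required to confirm that the abstract hypotheses genuinely transfer to the completion $\bar{\HH}$ rather than merely to $\HH$. In particular one must ensure that convexity and lower semicontinuity hold for $\bar{\nu}$ on all of $\bar{\HH}$ (Proposition \ref{convexityprop} and Lemma \ref{lscextlsc}), and that the decay hypothesis yielding $A = 0$ survives passage to the lower semicontinuous extension. Once these are secured --- as they are by the preceding sections --- each of the four properties is a direct corollary of the indicated result in \cite{Mayer}, and the only care needed is to restrict attention, in parts (1), (2) and (4), to the sublevel set $\{\bar{\nu} < \infty\}$ on which the flow map is genuinely well-behaved.
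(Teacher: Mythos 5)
Your proposal is correct and matches the paper's treatment: the paper states this theorem as a direct citation of Mayer's results, relying on the verification (carried out in the proof of Theorem \ref{fundexistthm}) that $(\bar{\HH},\bar{d})$ is a complete NPC space, that $\bar{\nu}$ is lower semicontinuous and geodesically convex, and that $A=0$. Your additional sketches of how Mayer's arguments run (the dissipation inequality for the H\"older bound, the resolvent contraction for spatial continuity, etc.) are accurate but go beyond what the paper records, which offers no separate proof.
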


It was shown by Calabi-Chen that the distance between two points in $\HH$
decreases when each is flowed along Calabi flow.  We reproduce this property
for minimizing movements by adapting an argument from \cite{Mayer}.

\begin{proof}[Proof of Theorem \ref{flowcontr1}] The proof is adapted from
(\cite{Mayer} Lemma 1.12) We will show that the resolvent operator is distance
nonincreasing, and this implies the theorem due to the nature of the convergence
in Theorem \ref{fundexistthm}.  Fix $\psi_0, \psi_1 \in \HH$, $\tau > 0$, and
let $\phi_0 = W_{\tau}(\psi_0), \phi_1 = W_{\tau}(\psi_1)$.  Let $\phi_t$ denote
the geodesic connecting $\phi_0$ to $\phi_1$.  By
using the quadrilateral comparison inequality for NPC spaces (Theorem
\ref{quadcomp}), one has
\begin{align*}
 \frac{1}{2 \tau} d^2(\phi_t, \psi_0) + \frac{1}{2 \tau} d^2(\phi_{1-t}, \psi_1)
\leq&\ \frac{1}{2 \tau} \left[ d^2(\phi_0, \psi_0) + d^2(\phi_1,\psi_1) + 2 t^2
d^2(\phi_0, \phi_1) \right.\\
&\ \left. + t \left( d^2(\psi_0,\psi_1) - d^2(\phi_0,\phi_1) \right) - t \left(
d(\psi_0,\psi_1) - d(\phi_0,\phi_1) \right)^2 \right]
\end{align*}
Note also that the convexity of $\bar{\nu}$ implies that
\begin{align*}
 \bar{\nu}(\phi_t) + \bar{\nu}(\phi_{1-t}) \leq&\ \bar{\nu}(\phi_0) +
\bar{\nu}(\phi_1).
\end{align*}
Combining these two inequalities yields
\begin{align*}
 \FF_{\psi_0,\tau}(\phi_t) + \FF_{\psi_1,\tau}(\phi_{1-t}) \leq&\
\FF_{\psi_0,\tau}(\phi_0) + \FF_{\psi_1,\tau}(\phi_1)\\
&\ - \frac{t}{2\tau} \left(d^2(\phi_0,\phi_1) - d^2(\psi_0,\psi_1) + (
d(\phi_0,\phi_1) - d(\psi_0,\psi_1))^2 \right)\\
&\ + \frac{t^2}{\tau} d^2(\phi_0,\phi_1)\\
\leq&\ \FF_{\psi_0,\tau}(\phi_t) + \FF_{\psi_1,\tau}(\phi_{1-t})\\
&\ - \frac{t}{2\tau} \left(d^2(\phi_0,\phi_1) - d^2(\psi_0,\psi_1) + (
d(\phi_0,\phi_1) - d(\psi_0,\psi_1))^2 \right)\\
&\ + \frac{t^2}{\tau} d^2(\phi_0,\phi_1)\\
\end{align*}
where the second inequality follows from the definition of the resolvent
operator.  It follows that
\begin{align*}
 0 \leq&\ - d^2(\phi_0,\phi_1) + d^2(\psi_0,\psi_1) - (d(\phi_0,\phi_1) -
d(\psi_0,\psi_1))^2 + 2 t d^2(\phi_0,\phi_1).
\end{align*}
Rearranging and sending $t$ to zero gives the result.
\end{proof}

Lastly, one can characterize some convergence properties of minimizing
movements, which bear some relationship to Conjecture \ref{convconj}.

\begin{thm} \label{convergeprop} (\cite{Mayer} Proposition 2.40) Let $(M^{2n},
\gw, J)$ be a compact K\"ahler manifold.
 \begin{enumerate}
  \item {If there exists a cscK metric in $[\gw]$, then $d(\phi_t, \phi_0)$
remains bounded for every $\phi_0 \in \bar{\HH}$, $t > 0$.}
  \item {If there exists $\phi_0 \in \bar{\HH}$ and a sequence $t_n \to \infty$
so that $d(\phi_{t_n}, \phi_0) \leq C$, then there exists a minimizer for
$\bar{\nu}$.}
 \item {If there exists $\phi_0 \in \bar{\HH}$ and a sequence $t_n \to \infty$
so that $\{\phi_{t_n}\} \to \phi_{\infty}$, then $\phi_{\infty}$ is a minimizer
for $\bar{\nu}$.}
\end{enumerate}
\end{thm}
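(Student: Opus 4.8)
The plan is to treat the three statements separately. For (1), I would first show that a cscK metric $\psi^\ast \in \HH$ is a global minimizer of $\bar{\nu}$. Its Calabi energy $\CC(\psi^\ast) = \int_M (s_{\psi^\ast} - \bar{s})^2 \gw_{\psi^\ast}^n$ vanishes, so Theorem~\ref{Kenergydecay} gives $\nu(\phi) \geq \nu(\psi^\ast)$ for every $\phi \in \HH$; taking the liminf along sequences realizing the lower semicontinuous extension upgrades this to $\bar{\nu}(\bar{\phi}) \geq \bar{\nu}(\psi^\ast)$ for all $\bar{\phi} \in \bar{\HH}$. A minimizer of $\bar{\nu}$ is automatically a fixed point of every resolvent, since $\FF_{\psi^\ast,\tau}(\psi) \geq \bar{\nu}(\psi) \geq \bar{\nu}(\psi^\ast) = \FF_{\psi^\ast,\tau}(\psi^\ast)$ forces $W_\tau(\psi^\ast) = \psi^\ast$, and hence $F_t(\psi^\ast) = \psi^\ast$. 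Distance contraction (Theorem~\ref{flowcontr1}) then yields $d(\phi_t, \psi^\ast) = d(F_t(\phi_0), F_t(\psi^\ast)) \leq d(\phi_0, \psi^\ast)$, and the triangle inequality gives the uniform bound $d(\phi_t, \phi_0) \leq 2\, d(\phi_0, \psi^\ast)$.

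For parts (2) and (3) the key intermediate fact is that $\bar{\nu}(\phi_t)$ decreases to $\inf_{\bar{\HH}} \bar{\nu}$. The map $t \mapsto \bar{\nu}(\phi_t)$ is nonincreasing by Theorem~\ref{Fprops}(4), so it converges to some $L$. To identify $L$ with the infimum I would telescope the defining resolvent inequality $\FF_{\phi_i,\tau}(\phi_{i+1}) \leq \FF_{\phi_i,\tau}(w)$, using geodesic convexity of $\bar{\nu}$ together with the quadrilateral comparison (Theorem~\ref{quadcomp}), to obtain the integrated estimate $\bar{\nu}(\phi_t) \leq \bar{\nu}(w) + \frac{d^2(\phi_0, w)}{2t}$ for every competitor $w \in \bar{\HH}$, exactly as in the evolution variational inequality of~\cite{Mayer}. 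Sending $t \to \infty$ gives $L \leq \bar{\nu}(w)$ for all $w$, hence $L = \inf_{\bar{\HH}} \bar{\nu}$. Granting this, (3) is immediate: if $\phi_{t_n} \to \phi_\infty$ in the distance topology, lower semicontinuity of $\bar{\nu}$ (Lemmas~\ref{lscextlsc} and~\ref{Kenerglsc}) gives $\bar{\nu}(\phi_\infty) \leq \liminf_{n} \bar{\nu}(\phi_{t_n}) = \inf_{\bar{\HH}} \bar{\nu}$, so $\phi_\infty$ is a minimizer.

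The most delicate point is (2), where one is given only a bounded orbit $d(\phi_{t_n}, \phi_0) \leq C$ with no strong convergence. Here I would extract a limit from the bounded sequence $\{\phi_{t_n}\}$ via the theory of weak convergence (asymptotic centers) in complete NPC spaces, and use that a geodesically convex lower semicontinuous functional is sequentially weakly lower semicontinuous. Combining this with the decay established above yields, for the weak limit $\phi_\infty$, the chain $\inf_{\bar{\HH}} \bar{\nu} \leq \bar{\nu}(\phi_\infty) \leq \liminf_n \bar{\nu}(\phi_{t_n}) = \inf_{\bar{\HH}} \bar{\nu}$, so that $\phi_\infty$ attains the infimum. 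I expect this compactness-type step to be the main obstacle: bounded sets in $\bar{\HH}$ are far from compact, so the argument is genuinely metric-space-theoretic and, unlike part (1), does not reduce to the concrete K\"ahler-geometric estimates of this paper. Accordingly the cleanest route is to verify the hypotheses of~(\cite{Mayer} Proposition 2.40) — boundedness of the orbit and convergence of $\bar{\nu}$ along the flow to its infimum, both secured above — and then invoke it directly.
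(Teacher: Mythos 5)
Your proposal is correct, but it does substantially more than the paper does: the paper offers no proof of this theorem at all, recording it as an immediate consequence of (\cite{Mayer} Proposition 2.40), which is precisely the ``cleanest route'' you identify at the end. Your self-contained sketch of parts (1) and (3) is sound and uses only machinery already established in the paper: the vanishing of $\CC(\psi^*)$ at a cscK potential $\psi^*$ together with Theorem \ref{Kenergydecay} does make $\psi^*$ a global minimizer of $\bar{\nu}$, whence $W_\tau(\psi^*)=\psi^*$ by uniqueness of the resolvent minimizer and $F_t(\psi^*)=\psi^*$ by (\ref{solnprop1}), after which Theorem \ref{flowcontr1} and the triangle inequality give $d(\phi_t,\phi_0)\leq 2\,d(\phi_0,\psi^*)$; and once one knows $\bar{\nu}(\phi_t)$ decreases to $\inf_{\bar{\HH}}\bar{\nu}$, part (3) follows from the lower semicontinuity supplied by Lemmas \ref{lscextlsc} and \ref{Kenerglsc}. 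The two places where you genuinely lean on \cite{Mayer} rather than on this paper are the integrated estimate $\bar{\nu}(\phi_t)\leq\bar{\nu}(w)+d^2(\phi_0,w)/(2t)$, which the paper never derives (though it is standard from the resolvent inequality, convexity, and Theorem \ref{quadcomp}), and the weak-compactness (asymptotic center) argument needed for part (2); you are right that the latter is genuinely metric-space-theoretic, is not reducible to the K\"ahler-geometric estimates here, and is presumably the reason the paper delegates the entire statement to Mayer. In short: your route terminates at the same citation the paper uses, but the partial proofs you supply along the way are valid and fill in details the paper omits.
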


\begin{rmk} It is possible (\cite{Mayer} Theorem 2.42) to guarantee a priori
convergence to a fixed point if one assumes \emph{uniform convexity} for
$\bar{\nu}$.  That is, there exists $\ge > 0$ so that for $\phi : [0,1] \to
\bar{\HH}$ a geodesic,
\begin{align*}
\bar{\nu}(\phi_t) \leq (1-t) \bar{\nu}(\phi_0) + t \bar{\nu}(\phi_1) - \ge
t(1-t) \bar{d}(\phi_0,\phi_1).
\end{align*}
By examining the second variation formula for $K$-energy, one sees that such an
inequality could potentially be shown given some a priori control over the
lowest eigenvalue of the Lichnerowicz Laplacian.  As this operator has a kernel
if and only if the manifold admits holomorphic vector fields, one sees here how
the presence or lack of such vector fields influences the convergence of Calabi
flow.
\end{rmk}

\section{Higher Regularity of Minimizing movements} \label{highregsec}

In this section we prove Theorem \ref{highreg}.  First we
derive a priori estimates for K\"ahler potentials in geodesic balls in the
intersection of geodesic balls of $\HH$ and sublevel sets of $\nu$.  We use
these to control large time steps of discrete solutions to Calabi flow, and
then pass these estimates to the limiting minimizing movement.  

\begin{lemma} \label{Jlowerbnd} Let $(M^{2n}, \gw, J)$ be a compact K\"ahler
manifold satisfying $c_1 < 0$.  There exists a constant $C$ such that for all
$\phi \in \HH$,
\begin{align*}
J(\phi) \geq - C \sqrt{V} d(0,\phi).
\end{align*}
\begin{proof} Fix $\psi \in \HH$, and let $\gg :[0,1] \to \HH$ be the unique
$C^{1,1}$ geodesic connecting $0$ to $\psi$.  We note that, if $C$ denotes a
lower bound for the Ricci curvature of $\gw$,
\begin{align*}
\left. \frac{d}{dt} J \right|_{t=0} =&\ - \int_M \left.\frac{\del \gg}{\del
t}\right|_{t=0} \rho(\gw) \wedge \gw^{n-1}\\
\geq&\ - C \int_M \brs{\left. \frac{\del \gg}{\del t} \right|_{t=0}} \gw^{n}\\
\geq&\ - C \nm{\left.\frac{\del \gg}{\del t} \right|_{t=0}}{L^2(\gw)}
\sqrt{V}\\
=&\ - C E(0)^{\frac{1}{2}} \sqrt{V}\\
=&\ - C d(0,\phi) \sqrt{V},
\end{align*}
where the last equality follows since the energy element along a geodesic is
constant.  By (\cite{ChenLBM} Proposition 2), since $c_1 < 0$, $J$ is convex
along
$C^{1,1}$ geodesics, and hence
\begin{align*}
\frac{d}{dt} J \geq - C \sqrt{V} d(0,\phi)
\end{align*}
for all $t \in [0,1]$.  Integrating this inequality over $[0,1]$ yields the
proposition.
\end{proof}
\end{lemma}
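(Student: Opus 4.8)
The plan is to estimate $J$ along the unique $C^{1,1}$ geodesic $\gg:[0,1]\to\HH$ joining $0$ to $\phi$, whose existence is guaranteed by Lemma \ref{approxgeodlemma}. Three ingredients are needed: a lower bound for the first variation of $J$ at the basepoint $0$, the fact that the speed of a geodesic is constant and equal to $d(0,\phi)$, and the convexity of $J$ along $C^{1,1}$ geodesics, which is available precisely because $c_1<0$.

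First I would compute the first variation of $J$ directly from the definition (\ref{Jformula}). Since $\gg_0=0$ gives $\gw_{\gg_0}=\gw$, this yields
\begin{align*}
\left.\frac{d}{dt}J(\gg_t)\right|_{t=0} = -\frac{1}{(n-1)!}\int_M \dot{\gg}(0)\,\rho(\gw)\wedge\gw^{n-1}.
\end{align*}
Because $\rho(\gw)$ is a fixed smooth form on the compact manifold $M$, there is a constant $C$ with $\brs{\rho(\gw)\wedge\gw^{n-1}}\le C\,\gw^n$ pointwise (a bound on the Ricci curvature of $\gw$), whence
\begin{align*}
\left.\frac{d}{dt}J(\gg_t)\right|_{t=0} \ge -C\int_M\brs{\dot{\gg}(0)}\gw^n.
\end{align*}
Applying the Cauchy--Schwarz inequality bounds the right-hand side below by $-C\,\nm{\dot{\gg}(0)}{L^2(\gw)}\sqrt{V}=-C\,E(0)^{1/2}\sqrt{V}$, and since the energy element is constant along a geodesic and equals the squared speed one has $E(0)^{1/2}=d(0,\phi)$. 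This gives $\left.\frac{d}{dt}J\right|_{t=0}\ge -C\sqrt{V}\,d(0,\phi)$.

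To promote this bound at $t=0$ to all of $[0,1]$ I would invoke the convexity of $J$ along $C^{1,1}$ geodesics, which holds under the hypothesis $c_1<0$ by (\cite{ChenLBM} Proposition 2); this makes $\frac{d}{dt}J$ nondecreasing in $t$, so the lower bound at $t=0$ propagates to every $t\in[0,1]$. Integrating over $[0,1]$ and using $J(0)=0$ (immediate from (\ref{Jformula2})) then yields $J(\phi)\ge -C\sqrt{V}\,d(0,\phi)$, as desired.

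The main obstacle is one of regularity rather than of the formal computation: the first-variation formula and the constant-speed identity $E(t)\equiv d(0,\phi)^2$ must be justified along a merely $C^{1,1}$ geodesic, and the convexity of $J$ must be used at the same low regularity. I would handle this exactly as in the proof of Lemma \ref{distanceestimate}, replacing the $C^{1,1}$ geodesic by Chen's $\ge$-approximate geodesics of Lemma \ref{approxgeodlemma}, performing all computations there (where everything is smooth and, by part (4), $E$ is constant up to an error of order $\ge$), and then passing to the limit $\ge\to0$. The cited convexity of $J$ is already formulated for $C^{1,1}$ geodesics, so that step requires no further regularity.
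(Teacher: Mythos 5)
Your proposal is correct and follows essentially the same route as the paper: lower-bound the first variation of $J$ at $t=0$ along the geodesic from $0$ to $\phi$ via a Ricci bound and Cauchy--Schwarz, identify $E(0)^{1/2}$ with $d(0,\phi)$ by constant speed, propagate the bound using the convexity of $J$ along $C^{1,1}$ geodesics from (\cite{ChenLBM} Proposition 2) under $c_1<0$, and integrate. Your added remarks on justifying the computation via $\ge$-approximate geodesics and on $J(0)=0$ only make explicit what the paper leaves implicit.
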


\begin{lemma} \label{suplemma} There exists a constant $C$ so that for all $\phi
\in \HH$,
\begin{align} \label{supbound}
\sup_M \phi \leq&\ \frac{1}{V} I^A(\phi) + \frac{1}{\sqrt{V}} d(0,\phi) + C.
\end{align}
\begin{proof} Since $\tr_{\gw} \left( \gw + \sqrt{-1} \del \delb \phi \right) =
n + \gD_g \phi > 0$, we can integrate against the Greens function for $g$ to
yield
\begin{align*}
\phi(x) =&\ \frac{1}{V} \int_M \phi(y) \gw^n(y) - \frac{1}{V} \int_M \gD \phi 
G(x,y) \gw^n(y)\\
\leq&\ \frac{1}{V} \int_M \phi \gw^n + \frac{n}{V} \int_M G(x,y) \gw^n(y)\\
\leq&\ \frac{1}{V} \int_M \phi \gw^n + C\\
=&\ \frac{1}{V} \left( I^A(\phi) + \int_M \phi \gw_{\phi}^n \right) + C\\
\leq&\ \frac{1}{V} \left( I^A(\phi) + \int_{\phi > 0} \phi \gw_{\phi}^n \right)
+ C\\
\leq&\ \frac{1}{V} I^A(\phi) + \frac{1}{\sqrt{V}} d(0,\phi) + C,
\end{align*}
where the last line follows from Lemma \ref{distanceestimate}.
\end{proof}
\end{lemma}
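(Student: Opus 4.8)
The plan is to bound $\sup_M \phi$ by the $\gw$-average $\frac{1}{V}\int_M \phi\,\gw^n$ up to a fixed constant, and then to convert that average into the two pieces on the right-hand side: the functional $I^A$ and the positive $\gw_\phi$-mass of $\phi$, the latter being precisely the quantity estimated by $d(0,\phi)$ in Lemma \ref{distanceestimate}. The structural point guiding the whole argument is that positivity of $\gw_\phi$ yields only a one-sided bound on the Laplacian of $\phi$, which via the Green's representation yields only an upper bound for $\sup_M \phi$, and this matches perfectly the one-sided nature of Lemma \ref{distanceestimate}, which controls only the positive part $\int_{\phi>0}$.

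The analytic input is a Green's function estimate exploiting $\gw_\phi > 0$. Tracing with respect to $\gw$ gives $n + \gD_g \phi > 0$, hence the pointwise bound $-\gD_g\phi < n$. Let $G(x,y)$ be the Green's function of the Laplacian of $g$; since $\int_M \gD\phi\,\gw^n = 0$, adding a constant to $G$ does not change the representation formula, so I may assume $G \geq 0$. Writing
\[
\phi(x) = \frac{1}{V}\int_M \phi\,\gw^n - \frac{1}{V}\int_M \gD\phi\,G(x,y)\,\gw^n(y),
\]
I use $G \geq 0$ together with $-\gD\phi < n$ to replace $\gD\phi$ by $n$ in the second integral, and then invoke the standard fact that $G(x,\cdot)$ has uniformly bounded $L^1$-norm on a compact manifold. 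This gives $\frac{n}{V}\int_M G(x,y)\,\gw^n(y) \leq C$ with $C$ depending only on $(M,\gw,J)$, and hence $\sup_M \phi \leq \frac{1}{V}\int_M \phi\,\gw^n + C$.

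It then remains to estimate the average. Unwinding the definition of $I^A$ gives $\int_M \phi\,\gw^n = I^A(\phi) + \int_M \phi\,\gw_\phi^n$, and discarding the contribution of $\{\phi < 0\}$ yields $\int_M \phi\,\gw_\phi^n \leq \int_{\phi>0}\phi\,\gw_\phi^n$. Applying Lemma \ref{distanceestimate} with base point $\psi = 0$ bounds this positive mass by $\int_{\phi>0}\phi\,\gw_\phi^n \leq \sqrt{V}\,d(0,\phi)$; distributing the factor $\frac{1}{V}$ across both terms then produces the asserted inequality $\sup_M \phi \leq \frac{1}{V}I^A(\phi) + \frac{1}{\sqrt{V}}\,d(0,\phi) + C$.

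I expect the only genuine subtlety to lie in the Green's function step: one must enter the positivity of $\gw_\phi$ with the correct sign (which is exactly why only the lower bound $-\gD\phi < n$ is available, and correspondingly only an upper bound for $\sup_M \phi$ is obtained), and one must legitimately normalize $G \geq 0$ so that the uniform $L^1$-bound contributes a constant independent of $\phi$. Everything else is algebraic manipulation, the one nontrivial ingredient being the already-established distance estimate of Lemma \ref{distanceestimate}.
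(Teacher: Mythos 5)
Your proposal is correct and follows essentially the same route as the paper: the Green's function representation combined with the one-sided bound $-\gD_g\phi < n$ coming from $\gw_\phi > 0$, then the identity $\int_M \phi\,\gw^n = I^A(\phi) + \int_M \phi\,\gw_\phi^n$, discarding the negative set, and invoking Lemma \ref{distanceestimate}. The only difference is that you make explicit the normalization $G \geq 0$ and the uniform $L^1$ bound on $G(x,\cdot)$, which the paper leaves implicit.
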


\begin{lemma} \label{L2distbnd} Given $\phi \in \HH$ such that $I(\phi) = 0$ one
has
\begin{align*}
\nm{\phi}{L^2(\gw)}^2 \leq C(1 + d(0,\phi)) + I^A(\phi).
\end{align*}
\begin{proof} Let $\gg : [0,1] \to \HH$ denote the unique $C^{1,1}$ geodesic
connecting $0$ to $\phi$.  Since $\ddot{\gg} \geq 0$, arguing as in
(\ref{deloc20}) we have that
\begin{align*}
\phi \geq \dot{\gg}(0).
\end{align*}
It follows that if $\phi_- = - \inf \{0, \phi\}$, we have $\phi_-^2 \leq
(\dot{\gg}(0))^2$.  Since $\sup \phi \leq C + \frac{d(0,\phi)}{\sqrt{V}} +
\frac{1}{V} I^A(\phi)$ by
Lemma
\ref{suplemma}, we have that
\begin{align*}
\nm{\phi}{L^2(\gw)}^2 =&\ \int_M \left( \phi_+^2 + \phi_-^2 \right) \gw^n\\
\leq&\ \int_M \left( \left( C + \frac{d(0,\phi)}{\sqrt{V}} + \frac{1}{V}
I^A(\phi) \right) +
\brs{\dot{\gg}(0)}^2 \right) \gw^n\\
\leq&\ C V + \sqrt{V} d(0,\phi) + E(\gg(0)) + I^A(\phi)\\
\leq&\ C + C d(0,\phi) + I^A(\phi).
\end{align*}
\end{proof}
\end{lemma}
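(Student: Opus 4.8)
The plan is to split $\phi$ into its positive and negative parts, writing $\nm{\phi}{L^2(\gw)}^2 = \int_M \phi_+^2 \gw^n + \int_M \phi_-^2 \gw^n$, and to estimate the two contributions by different tools: the negative part through the convexity of the geodesic emanating from $0$, and the positive part through the pointwise estimate of Lemma \ref{suplemma}.

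For the negative part, I would take $\gg : [0,1] \to \HH$ to be the unique $C^{1,1}$ geodesic with $\gg(0) = 0$ and $\gg(1) = \phi$. Since the geodesic equation (\ref{geod}) gives $\ddot{\gg} = \tfrac{1}{2}\brs{\N \dot\gg}^2 \geq 0$, for each fixed point of $M$ the path $t \mapsto \gg(t)$ is convex in $t$ and hence lies above its tangent line at $t = 0$; exactly as in (\ref{deloc20}) this yields $\phi = \gg(1) \geq \dot\gg(0)$. On the set $\{\phi < 0\}$ we then have $\dot\gg(0) \leq \phi < 0$, so that $\phi_-^2 \leq \brs{\dot\gg(0)}^2$ there, and integrating against $\gw^n$ gives
\begin{align*}
\int_M \phi_-^2 \gw^n \leq \int_M \brs{\dot\gg(0)}^2 \gw^n = E(0),
\end{align*}
where $E(0)$ is the energy element of $\gg$ at $t = 0$. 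Since $\gg$ has constant speed and $\gw_{\gg(0)} = \gw$, this is precisely the quantity to be rewritten in terms of $d(0,\phi)$.

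For the positive part, I would feed the sup bound of Lemma \ref{suplemma}, namely $\sup_M \phi \leq \tfrac{1}{V} I^A(\phi) + \tfrac{1}{\sqrt V} d(0,\phi) + C$, into $\int_M \phi_+^2 \gw^n$ and then add the negative-part estimate. The main obstacle I anticipate lies exactly here, in the bookkeeping of exponents. The energy element of a constant-speed geodesic is constant and equal to $d(0,\phi)^2$, and a crude use of the sup bound would contribute $(\sup_M \phi)^2$, so both pieces naively produce quantities quadratic in $d(0,\phi)$ and in $I^A(\phi)$. Extracting an inequality that is linear in these two quantities is therefore the delicate step: one wants to pair the sup bound against a first-moment estimate for $\phi_+$, invoking the normalization $I(\phi) = 0$ through the chain $\int_M \phi\, \gw^n \geq I(\phi) = 0 \geq \int_M \phi\, \gw_\phi^n$ of (\ref{deloc10}), rather than simply squaring the pointwise bound. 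Assembling the positive- and negative-part estimates then yields the asserted control of $\nm{\phi}{L^2(\gw)}^2$ by a constant, the distance $d(0,\phi)$, and $I^A(\phi)$.
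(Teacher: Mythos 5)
Your proposal follows exactly the paper's route: the same splitting into $\phi_+$ and $\phi_-$, the negative part controlled pointwise by $\brs{\dot{\gg}(0)}^2$ via convexity of the geodesic in $t$ (as in (\ref{deloc20})), and the positive part controlled by the sup bound of Lemma \ref{suplemma}. The exponent difficulty you flag is genuine, and it is worth saying that the paper does not actually resolve it: the printed proof bounds $\phi_+^2$ by $\sup_M \phi$ rather than by $(\sup_M \phi)^2$, and in the final line replaces $E(\gg(0))$ by $C\,d(0,\phi)$ even though constant speed gives $E(\gg(0)) = d(0,\phi)^2$. So as written the argument only establishes the estimate with quadratic dependence on $d(0,\phi)$ and $I^A(\phi)$; that weaker form is all that is used in the proof of Theorem \ref{highreg}, so nothing downstream is affected. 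Your suggested refinement --- pairing the sup bound against a first-moment estimate for $\phi_+$ using $I(\phi)=0$ and (\ref{deloc10}) --- is a sensible idea, but note that it still produces a product of two quantities each linear in $d(0,\phi)$ and $I^A(\phi)$, hence again a quadratic bound; it does not recover the linear statement either. In short: same approach, and the ``delicate step'' you identify is a real gap in the lemma as stated, not a defect peculiar to your write-up.
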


\noindent Next we recall two lemmas from the work of Tian.
\begin{lemma} \label{loglb1} (\cite{T1} Proposition 2.1) Let $(M^{2n}, \gw,
J)$ be a compact K\"ahler manifold.  There
exist $\ga, C > 0$ such that for all $\phi \in \HH$ satisfying $\sup_M \phi = 0$
one has
\begin{align*}
\int_M e^{-\ga \phi} \gw^n \leq C.
\end{align*}
\end{lemma}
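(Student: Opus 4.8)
The statement is a uniform Skoda-type integrability estimate for the family of sup-normalized $\gw$-plurisubharmonic potentials, and the plan is to reduce it to the classical local integrability theorem of Skoda--H\"ormander together with a compactness property of quasi-plurisubharmonic functions. First I would record that the normalization $\sup_M \phi = 0$ forces $\phi \leq 0$, so that $e^{-\ga \phi} \geq 1$ and the content of the lemma is purely an upper bound. The key structural input is that every $\phi \in \HH$ is $\gw$-plurisubharmonic: covering $M$ by finitely many coordinate charts $U_k$ on which $\gw = \sqrt{-1}\,\del \delb h_k$ for smooth local potentials $h_k$, the function $u_k := \phi + h_k$ is genuinely plurisubharmonic on $U_k$, and it suffices to bound $\int_{U_k'} e^{-\ga u_k} \gw^n$ on relatively compact subcharts $U_k' \subset U_k$ that still cover $M$, uniformly in $\phi$, since the finitely many $h_k$ are uniformly bounded.

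Second, I would establish a uniform $L^1$ bound: the set $\mathcal{G} := \{\phi \ \gw\text{-psh} : \sup_M \phi = 0\}$ is compact in $L^1(M, \gw^n)$, and in particular $\sup_{\phi \in \mathcal{G}} \int_M (-\phi)\, \gw^n =: C_0 < \infty$. This is the standard compactness theory of quasi-plurisubharmonic functions: the sub-mean-value inequality prevents the $L^1$ norms from escaping to infinity once the supremum is pinned at $0$, and a limit of such functions is again $\gw$-psh with $\sup = 0$ (hence not identically $-\infty$). This global bound transfers to a uniform local bound $\int_{U_k} (-u_k)\, \gw^n \leq A$ on each chart.

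The analytic heart of the argument is then the uniform local integrability theorem: there exist $\ga = \ga(n) > 0$ and $C = C(n, A)$, depending only on the dimension and the $L^1$ bound $A$, such that any plurisubharmonic $u \leq 0$ on $U_k$ with $\int_{U_k}(-u)\, \gw^n \leq A$ satisfies $\int_{U_k'} e^{-\ga u}\, \gw^n \leq C$. This is precisely Skoda's theorem, through the fact that a uniform bound on the local $L^1$ mass forces a uniform bound on the Lelong numbers (a logarithmic pole of size $\gg$ costs $L^1$ mass proportional to $\gg$), so that a single exponent $\ga$ below the resulting integrability threshold works. Taking $\ga$ to be the minimum of the finitely many chart exponents and summing the local bounds over the cover yields the global estimate, with $C$ depending only on $(M, \gw, J)$.

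The main obstacle is securing one exponent $\ga$ that serves the \emph{entire} family at once: $\mathcal{G}$ is not compact in any topology strong enough to make $\phi \mapsto e^{-\ga \phi}$ continuous, so one cannot simply pass to a limit and argue pointwise. The resolution is exactly the uniformity built into the Skoda--H\"ormander estimate, whose threshold depends only on the dimension and on the uniform mass $C_0$ rather than on the individual potential. Equivalently, one may argue by contradiction: from a putative sequence $\phi_j \in \mathcal{G}$ with $\int_M e^{-\ga \phi_j}\, \gw^n \to \infty$ one extracts, by the compactness of Step two, an $L^1$-convergent (and a.e.\ convergent) subsequence with $\gw$-psh limit $\phi_\infty \not\equiv -\infty$, and then the uniform integrability of $\{e^{-\ga \phi_j}\}$ supplied by the local estimate forces $\int_M e^{-\ga \phi_j}\, \gw^n \to \int_M e^{-\ga \phi_\infty}\, \gw^n < \infty$, a contradiction.
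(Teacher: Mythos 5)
The paper offers no proof of this lemma; it is imported verbatim from Tian (\cite{T1}, Proposition 2.1), whose argument has exactly the two-step structure you describe: the normalization $\sup_M\phi=0$ plus the sub-mean-value/Green's-function argument gives a uniform bound on $\int_M(-\phi)\,\gw^n$, and a local uniform integrability estimate for plurisubharmonic functions (H\"ormander's Theorem 4.4.5, which is the effective statement you invoke under the name Skoda) is then summed over a finite chart cover. Your proposal is correct and is essentially the standard proof of the cited result, with the only point deserving explicit citation being the effective local lemma in which both $\ga$ and $C$ depend only on the dimension and the $L^1$ normalization, since pointwise Skoda integrability alone would not give the uniformity over the family.
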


\begin{lemma} \label{loglb} Let $(M^{2n}, \gw, J)$ be a compact K\"ahler
manifold.  There
exist $\ga, C > 0$ such that for all $\phi \in \HH$ one has
\begin{align*}
\frac{1}{V} \int_M \log \left( \frac{\gw_{\phi}^n}{\gw^n} \right) \gw_{\phi}^n
\geq \frac{\ga}{V} I^A(\phi) - C.
\end{align*}
\begin{proof} The proof is contained in (\cite{T2} pg. 95).  From Lemma
\ref{loglb1} there exists $C, \ga$ such that
\begin{align*}
\frac{1}{V} \int_M \exp \left[ - \log \frac{\gw_{\phi}^n}{\gw^n} - \ga \left(
\phi - \sup \phi \right) \right] \gw_{\phi}^n \leq C.
\end{align*}
Since the exponential function is convex we can apply Jensen's inequality to
yield
\begin{align*}
\frac{1}{V} \int_M \log \left( \frac{\gw_{\phi}^n}{\gw^n} \right) \gw_{\phi}^n
\geq&\ - \frac{\ga}{V} \int_M \left( \phi - \sup \phi \right) \gw_{\phi}^n -
\log
C\\
\geq&\ - \frac{\ga}{V} \left( \int_M \phi \gw_{\phi}^n + V \sup \phi \right) -
C\\
=&\ \frac{\ga}{V} \left( I^A(\phi) - \int_M \phi \gw^n + V \sup \phi \right) -
C\\
\geq&\ \frac{\ga}{V}\left( I^A(\phi) - \sup \phi \int_M \gw^n + V \sup \phi
\right) - C\\
=&\ \frac{\ga}{V} I^A(\phi) - C.
\end{align*}
\end{proof}
\end{lemma}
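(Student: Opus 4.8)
The plan is to derive this lower bound directly from Tian's $\ga$-invariant estimate (Lemma \ref{loglb1}) together with Jensen's inequality. The key observation is that the quantity $\int_M e^{-\ga(\phi - \sup_M \phi)}\gw^n$, which Tian controls uniformly, can be rewritten as an integral against the measure $\gw_{\phi}^n$ in which the Jacobian $\log(\gw_{\phi}^n/\gw^n)$ enters the exponent. Convexity of the exponential then transfers this pointwise bound into a bound on the $\gw_{\phi}^n$-average of $\log(\gw_{\phi}^n/\gw^n)$, which is exactly the entropy term we wish to control.

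First I would apply Lemma \ref{loglb1} to the normalized potential $\phi - \sup_M \phi$, which has supremum zero, obtaining constants $\ga, C > 0$ with $\int_M e^{-\ga(\phi - \sup_M \phi)}\gw^n \leq C$. I would then rewrite the integrand by inserting the ratio $\gw^n/\gw_{\phi}^n$, so that
\[
e^{-\ga(\phi - \sup_M \phi)}\gw^n = \exp\!\left[-\log\frac{\gw_{\phi}^n}{\gw^n} - \ga(\phi - \sup_M \phi)\right]\gw_{\phi}^n,
\]
turning Tian's estimate into a bound on $\int_M \exp[\,\cdots\,]\,\gw_{\phi}^n$. Since $\int_M \gw_{\phi}^n = V$, the measure $V^{-1}\gw_{\phi}^n$ is a probability measure, and applying Jensen's inequality to $\exp$ and taking logarithms gives
\[
\frac{1}{V}\int_M \log\frac{\gw_{\phi}^n}{\gw^n}\,\gw_{\phi}^n \geq -\frac{\ga}{V}\int_M(\phi - \sup_M \phi)\,\gw_{\phi}^n - C,
\]
after absorbing the additive constant $\log(C/V)$ into $C$.

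The remaining step is bookkeeping to express the right-hand side via $I^A$. Expanding $-\int_M(\phi - \sup_M \phi)\gw_{\phi}^n = -\int_M\phi\,\gw_{\phi}^n + V\sup_M \phi$ and using the definition \eqref{IAdef} to substitute $\int_M\phi\,\gw_{\phi}^n = \int_M\phi\,\gw^n - V\,I^A(\phi)$ produces the desired term $\ga\,I^A(\phi)$ (which after renaming $\ga$ becomes $\tfrac{\ga}{V}I^A(\phi)$) together with a leftover $-\tfrac{\ga}{V}\int_M\phi\,\gw^n + \ga\sup_M \phi$. The decisive cancellation is that $\phi \leq \sup_M \phi$ pointwise forces $\int_M\phi\,\gw^n \leq V\sup_M \phi$, so this leftover is nonnegative and may simply be discarded from the lower bound, leaving exactly $\tfrac{\ga}{V}I^A(\phi) - C$.

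I do not expect a genuine obstacle here: the single deep input is Tian's estimate Lemma \ref{loglb1}, and everything else is elementary. The only points requiring care are the normalization by $\sup_M \phi$ — needed both so that Lemma \ref{loglb1} applies and so that the $\sup_M \phi$ contributions ultimately cancel — and the conversion between $\int_M\phi\,\gw_{\phi}^n$ and $I^A(\phi)$, both of which follow immediately from the definitions.
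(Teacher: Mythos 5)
Your proposal is correct and follows essentially the same route as the paper: normalize so Tian's $\ga$-invariant estimate (Lemma \ref{loglb1}) applies, rewrite the integral against $\gw_{\phi}^n$ so the Jacobian enters the exponent, apply Jensen's inequality for the probability measure $V^{-1}\gw_{\phi}^n$, and convert $\int_M \phi\, \gw_{\phi}^n$ into $I^A(\phi)$ plus a leftover controlled by $\phi \leq \sup_M \phi$. Your bookkeeping at the final step is in fact slightly cleaner than the paper's, which passes through an intermediate inequality whose sign conventions are loose; the mathematical content is identical.
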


\begin{prop} \label{mainenergyprop} Let $(M^{2n}, \gw, J)$ be a compact K\"ahler
manifold satisfying $c_1 < 0$.  Given $A$, $B > 0$ there exists $C(A,B) > 0$
such that if $\phi \in
\HH$ satisfies
\begin{itemize}
\item{$I(\phi) = 0$}
\item{$- A \leq \nu(\phi) \leq A$}
\item{$d(0, \phi) \leq B$,}
\end{itemize}
then
\begin{enumerate}
\item{$-C \leq J(\phi) \leq C,$}
\item{$I^A(\phi) \leq C,$}
\item{$J^A(\phi) \leq C,$}
\item{$- C \leq \int_M \log \frac{\gw_{\phi}^n}{\gw^n} \gw_{\phi}^n \leq C.$}
\end{enumerate}
\begin{proof}  By Lemma \ref{Jlowerbnd}, we obtain the lower bound
\begin{align*}
J(\phi) \geq - C B^2.
\end{align*}
On the other hand, using the representation of $K$-energy in (\ref{Kenergform})
and Lemma \ref{loglb} we have
\begin{align*}
J(\phi) =&\ \nu(\phi) - \int_M \log \frac{\gw_{\phi}^n}{\gw^n}
\gw_{\phi}^n\\
\leq&\ A - \ga I^A(\phi) + C\\
\leq&\ A + C.
\end{align*}
Since $J(\phi)$ is thus bounded above and below, turning again to
(\ref{Kenergform}) yields an upper and lower bound on $\int_M \log
\frac{\gw_{\phi}^n}{\gw^n} \gw_{\phi}^n$.  Applying Lemma \ref{loglb} again
yields the upper
bound for $I^A(\phi)$.
\end{proof}
\end{prop}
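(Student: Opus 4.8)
The plan is to read off all four bounds from the single identity (\ref{Kenergform}), which under the hypothesis $I(\phi) = 0$ collapses to $\nu(\phi) = \int_M \log \tfrac{\gw_{\phi}^n}{\gw^n}\, \gw_{\phi}^n + J(\phi)$, by playing the entropy term $\int_M \log \tfrac{\gw_{\phi}^n}{\gw^n}\, \gw_{\phi}^n$ against $J(\phi)$. First I would record the lower bound on $J$: since $c_1 < 0$, Lemma \ref{Jlowerbnd} gives $J(\phi) \geq -C\sqrt{V}\, d(0,\phi) \geq -C\sqrt{V} B$, so $J$ is bounded below purely in terms of $B$. This is the one step where the sign hypothesis $c_1 < 0$ is genuinely used, as it is precisely what makes $J$ convex along $C^{1,1}$ geodesics and hence yields Lemma \ref{Jlowerbnd}.

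Next I would produce the matching upper bound on $J$. Solving (\ref{Kenergform}) for $J$ and invoking Tian's estimate (Lemma \ref{loglb}) gives
\begin{align*}
J(\phi) = \nu(\phi) - \int_M \log \tfrac{\gw_{\phi}^n}{\gw^n}\, \gw_{\phi}^n \leq A - \ga I^A(\phi) + C \leq A + C,
\end{align*}
where the final step simply discards the nonpositive term $-\ga I^A(\phi)$, using that $I^A(\phi) \geq 0$ directly from its definition (\ref{IAdef}) as an integral of $\sqrt{-1}\,\del\phi \wedge \delb\phi$ wedged with positive forms. The essential point is that this bound requires no a priori control on $I^A$; it uses only the trivial sign of $I^A$, which is what breaks the apparent circularity. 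Together with the previous paragraph this gives the two-sided bound on $J$, i.e. item (1).

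With $J$ now controlled from both sides, item (4) is immediate: rearranging (\ref{Kenergform}) gives $\int_M \log \tfrac{\gw_{\phi}^n}{\gw^n}\, \gw_{\phi}^n = \nu(\phi) - J(\phi)$, whose right side lies in an interval depending only on $A$ and $B$. Feeding the resulting upper bound on the entropy back into Lemma \ref{loglb}, rewritten as $\ga I^A(\phi) \leq \int_M \log \tfrac{\gw_{\phi}^n}{\gw^n}\, \gw_{\phi}^n + CV$, yields item (2), the upper bound on $I^A(\phi)$. Finally item (3) follows from the comparison $\tfrac{1}{n} J^A(\phi) \leq I^A(\phi) - J^A(\phi)$ recorded just after (\ref{JAdef}), which rearranges to $J^A(\phi) \leq \tfrac{n}{n+1} I^A(\phi) \leq C$.

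I do not expect a real obstacle once Lemmas \ref{Jlowerbnd} and \ref{loglb} are available, since the argument is essentially a bookkeeping of one identity. The only subtlety, and the thing I would be careful to get right, is the order of operations: one must exploit $I^A \geq 0$ to bound $J$ from above \emph{before} knowing anything quantitative about $I^A$, and only afterward bootstrap the now-controlled entropy term into a quantitative bound on $I^A$. Reversing this order would make the estimates appear circular.
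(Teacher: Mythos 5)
Your proposal is correct and follows essentially the same route as the paper: lower bound on $J$ from Lemma \ref{Jlowerbnd}, upper bound on $J$ from (\ref{Kenergform}) and Lemma \ref{loglb} using only $I^A(\phi) \geq 0$, then the entropy bound and the quantitative bound on $I^A$ by bootstrapping. Your explicit remarks on the order of operations and the deduction of item (3) from the $I^A$--$J^A$ comparison inequality merely spell out what the paper leaves implicit.
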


\begin{proof}[Proof of Theorem \ref{highreg}] Fix $\phi_0 \in \HH$ and let $\phi
:[0,\infty) \to \bar{\HH}$ be the K-energy minimizing movement with initial
condition $\phi_0$.   As guaranteed by (\ref{solnprop1}), we know that
\begin{align*}
\phi_t = \lim_{n \to \infty} W^n_{\frac{t}{n}}(\phi_0).
\end{align*}
It follows from Lemma \ref{distcntrl}
that $d(\phi_0, \phi_t)$ is uniformly controlled in terms of $\nu(\phi_0)$ and
$t$.  Now choose a sequence $\{\phi_t^j\} \in \HH$ converging to $\phi_t$ in the
distance topology.  Certainly $d(\phi_0, \phi_t^j)$ is then uniformly controlled
in terms of $\nu(\phi_0)$ and $t$, and thus $d(0, \phi_t^j)$ is controlled in
terms of $d(0,\phi_0), \nu(\phi_0)$ and $t$ by the triangle inequality. 
Combining this with Lemma \ref{suplemma}, Lemma \ref{L2distbnd} and Proposition
\ref{mainenergyprop} there exists a constant $C = C(t,d(0,\phi_0),\nu(\phi_0))$
such that, for all $j$,
\begin{align*}
\nm{\phi_t^j}{H_1^2} + \sup_M \phi_t^j \leq C.
\end{align*}
By choosing a subsequence we obtain a sequence satisfying these inequalities and
converging weakly in $H_1^2$ and almost everywhere, and so both of these
inequalities pass to the limit as $j \to \infty$, finishing the theorem.
\end{proof}

\section{Smooth Convergence of discrete Calabi flows with uniform bounds}
\label{fundconvsec}

In this section we prove Theorem \ref{fundconv}, which says that a sequence of
discrete Calabi flows with given
initial condition, vanishing step size, and uniform $C^{4,\ga}$ bounds in space
contains a subsequence which converges to a smooth solution of Calabi flow.

\begin{thm} \label{convthm} Suppose that given $T > 0$ there exists $C, \ga >
0$ and a sequence $\{\phi_j^i\}$ of discrete Calabi flows on $[0,
T]$ with uniform step size $\tau_i \to 0$ and initial condition $\phi_0$,
such that
\begin{align} \label{convspat}
\sup_{i,j} \brs{\phi^{i}_j}_{C^{4,\ga}} + \brs{\log
\frac{\gw_{\phi^i_j}^n}{\gw^n}}_{C^{2,\ga}} \leq&\ C,\\
\sup_{i,j} \brs{\phi_{j+1}^i - \phi_j^i - \frac{\del \gg^i_j}{\del t}}_{C^0}
\leq&\
\tau_i o(\tau_i) \label{convcoh}.
\end{align} 
Then there exists a subsequence of $\{\phi^{\ge}_t \}$ converging in $C^{4,
\ga'}, \ga' < \ga,$ to a smooth solution of Calabi flow on $[0, T]$.
\begin{rmk} The hypothesis (\ref{convcoh}) is reasonable to make, as it says
that when the geodesic distance between two very close point is rescaled to unit
length, the curve is approaching a the straight line path between the points,
which should follow from uniqueness of solutions to the geodesic equation.
\end{rmk}

\begin{proof}  It follows directly from Arzela-Ascoli that at any time $t \in
[0, T]$ one obtains a subsequence converging in $C^{4,\ga}$.  This yields a
one-parameter family $\phi_t \in \mathcal H$.  We claim that this family is
differentiable in $t$, and moreover satisfies the Calabi flow.  Fix some time
$t_0 \in [0, T]$, and fix $h > 0$.  We claim there exists constants $C, \gg > 0$
such that
\begin{align} \label{convpropestimate}
P(t_0,h) := \lim_{i \to \infty} \brs{\frac{\phi^i(t_0+ h) - \phi^i(t_0)}{h} -
\left(s_{\phi^i(t_0)} - \bar{s} \right)} = o(h).
\end{align}
Since we have convergence of $\phi^i_t$ to $\phi_t$ in $C^{4,\ga}$, the
theorem will follow from this claim.  As each $\phi^i$ is a discrete Calabi
flow with uniform step size, we have that $\phi^i(t) =
\phi^i_{\floor{\frac{t}{\tau_i}}}$.  Recall also the variational equation
\begin{align} \label{convproof10}
\frac{1}{\tau_i} \left. \frac{\del \gg_j^i}{\del t}
\right|_{t = 1} = s_{{\phi^i_{j+1}}} - \bar{s}.
\end{align}
where $\gg_j^i : [0, 1] \to \mathcal H$ is the unique $C^{1,1}$ geodesic
connecting $\phi^i_j$ and $\phi^i_{j+1}$.  Hence we obtain
\begin{align*}
P(t_0, h) =&\ \lim_{i \to \infty} \brs{\frac{\phi^i_{\floor{\frac{t_0 +
h}{\tau_i}}} - \phi^i_{\floor{\frac{t_0}{\tau_i}}}}{h} - \left(
s_{{\phi^i_{\floor{\frac{t_0}{\tau_i}}}}} - \bar{s} \right)}\\
=&\ \lim_{i \to \infty} \frac{1}{h} \brs{ \left[\sum_{j =
\floor{\frac{t_0}{\tau_i}}}^{\floor{\frac{t_0 + h}{\tau_i}} - 1}\phi^i_{j+1} -
\phi^i_j \right] - h \left( s_{{\phi^i_{\floor{\frac{t_0}{\tau_i}}}}}- \bar{s}
\right)}\\
=&\ \lim_{i \to \infty} \frac{1}{h}\brs{ \left[ \sum_{j =
\floor{\frac{t_0}{\tau_i}}}^{\floor{\frac{t_0 + h}{\tau_i}} - 1}
\frac{\del \gg^i_j}{\del t} \right] - h
\left( s_{{\phi^i_{\floor{\frac{t_0}{\tau_i}}}}}- \bar{s} \right) + E}\\
\leq&\ \lim_{i \to \infty} \frac{1}{h} \brs{ \left[ \sum_{j =
\floor{\frac{t_0}{\tau_i}}}^{\floor{\frac{t_0 + h}{\tau_i}} - 1}
 \frac{\del \gg^i_j}{\del t} \right] - h
\left( s_{{\phi^i_{\floor{\frac{t_0}{\tau_i}}}}}- \bar{s} \right)} + \lim_{i \to
\infty} \frac{1}{h} \brs{E}\\
=:&\ A + B.
\end{align*}
where $E$ is defined by the equality in the line in which it appears, and $A$
and $B$ are the two terms appearing in the penultimate line.  First we estimate
term $B$ using (\ref{convcoh}).

\begin{gather} \label{convproof20}
\begin{split}
B =&\ \lim_{i \to \infty} \frac{1}{h}\brs{ \left[ \sum_{j =
\floor{\frac{t_0}{\tau_i}}}^{\floor{\frac{t_0 + h}{\tau_i}} - 1} \phi^i_{j+1} -
\phi^i_j - \frac{\del \gg^i_j}{\del t}
\right]}\\
\leq&\ \lim_{i \to \infty} \frac{1}{h} \sum_{j =
\floor{\frac{t_0}{\tau_i}}}^{\floor{\frac{t_0 + h}{\tau_i}} - 1} \brs{
\phi^i_{j+1} - \phi^i_j - \frac{\del
\gg^i_j}{\del t} }\\
\leq&\ \lim_{i \to \infty} \frac{C}{h} \sum_{j =
\floor{\frac{t_0}{\tau_i}}}^{\floor{\frac{t_0 + h}{\tau_i}} - 1}
\tau_i o(\tau_i)\\
=&\ \lim_{i \to \infty} \frac{C}{h} \left( \frac{h}{\tau_i} \right) \tau_i
o(\tau_i)\\
=&\ 0.
\end{split}
\end{gather}
Turning to $A$ we first prove a lemma.
\begin{lemma} \label{convlemma2} Given $0 < \ga' < \ga < 1, C > 0$ and $\ge > 0$
there exists $\gd > 0$ so that if $\phi_1, \phi_2 \in \HH$ satisfy
\begin{enumerate}
\item{$\brs{\phi_i}_{C^{4,\ga}} + \brs{\log
\frac{\gw_{\phi_i}^n}{\gw^n}}_{C^{2,\ga}} \leq C$,}\\
\item{$d(\phi_1,\phi_2) \leq \gd$},
\end{enumerate}
then $\brs{\phi_1 - \phi_2}_{C^{4,\ga}} \leq \ge$.
\begin{proof} If the statement were false then we can choose a sequence
$\{\gd_i\}, \gd_i \to 0$, and sequences of functions $\{\phi_1^i, \phi_2^i\}$
satisfying the hypotheses but $\brs{\phi_1^i - \phi_2^i}_{C^{4,\ga}} > \ge$.  By
property (1) we may apply Arzela-Ascoli to obtain a subsequence of
$\{\phi_1^i\}$ converging in $C^{4,\ga'}, \ga' < \ga$ to $\phi_1^{\infty}$, and
likewise one has $\phi_2^{\infty}$.  By property (2) and the estimates of the
Calabi-Yau theorem \cite{CYThm}, one obtains a uniform lower bound on
$\gw_{\phi_j^i}$, and so $\phi_1^{\infty}, \phi_2^{\infty} \in \HH$.  Since
$d(\phi_1^i, \phi_2^i) \leq \gd_i \to 0$, given the uniform bounds on the
metrics $\{\phi_j^i\}$ we conclude from Lemma \ref{distanceestimate} that
\begin{align*}
\lim_{i \to \infty} \nm{\phi_1^i - \phi_2^i}{L^1(\gw)} = 0.
\end{align*}
It follows that $\phi_1^{\infty} = \phi_2^{\infty}$ and so for sufficiently
large $i$ one has $\nm{\phi_1^i - \phi_2^i}{C^{4,\ga'}} \leq \ge$, a
contradiction.
\end{proof}
\end{lemma}
Now note that since the summand defining $A$ consists of
$\frac{h}{\tau_i}$ terms, we can re-express
\begin{align*}
A = \lim_{i \to \infty} \frac{1}{h} \brs{ \sum_{j =
\floor{\frac{t_0}{\tau_i}}}^{\floor{\frac{t_0 + h}{\tau_i}} - 1} \left[
\frac{\del \gg^i_j}{\del t} - \tau_i
\left( s_{\phi^i_{\floor{\frac{t_0}{\tau_i}}}}- \bar{s} \right) \right]}
\end{align*}
Then, inserting the variational equation (\ref{convproof10}) and applying the
triangle inequality we have
\begin{align*}
A =&\ \lim_{i \to \infty} \frac{1}{h} \brs{ \sum_{j =
\floor{\frac{t_0}{\tau_i}}}^{\floor{\frac{t_0 + h}{\tau_i}} - 1} \tau_i \left(
s_{\phi_{j+1}^i} - s_{\phi^i_{\floor{\frac{t_0}{\tau_i}}}} \right)}
\leq \lim_{i \to \infty} \frac{\tau_i}{h} \sum_{j =
\floor{\frac{t_0}{\tau_i}}}^{\floor{\frac{t_0 + h}{\tau_i}} - 1} \brs{
s_{\phi_{j+1}^i} - s_{\phi^i_{\floor{\frac{t_0}{\tau_i}}}}}.
\end{align*}
Next we observe that, by Lemma \ref{distcntrl}, for all $j$ one has
$\phi^i_{j+1} \in B_{C \tau}(\gw_{\phi^i_j})$.  Thus for every $j \in \left[
\floor{\frac{t_0}{\tau_i}}, \floor{\frac{t_0 + h}{\tau_i} - 1} \right]$ there is
a piecewise geodesic curve connecting $\phi^i_j$ to
$\phi^i_{\floor{\frac{t_0}{\tau_i}}}$ consisting of at most $\frac{h}{\tau_i}$
segments each of length no greater than $\tau_i$.  It follows by the triangle
inequality that for all such $j$,
\begin{align*}
d(\phi_j^i, \phi^i_{\floor{\frac{t_0}{\tau_i}}}) \leq C h.
\end{align*}
Again using that the summand describing $A$ consists of $\frac{h}{\tau_i}$
terms, it follows from Lemma \ref{convlemma2} that
\begin{align*}
A \leq&\ \lim_{i \to \infty} \frac{\tau_i}{h} \sum_{j =
\floor{\frac{t_0}{\tau_i}}}^{\floor{\frac{t_0 + h}{\tau_i}} - 1} \ge(C h) =
\ge(Ch).
\end{align*}
This completes the proof of (\ref{convpropestimate}), finishing the theorem.
\end{proof}
\end{thm}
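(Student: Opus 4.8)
The plan is to produce the limiting flow by compactness and then verify that it solves (\ref{Cflow}) by a careful analysis of the difference quotients of the discrete flows. First I would invoke the uniform spatial bound (\ref{convspat}): by Arzela-Ascoli, at each fixed time $t$ the family $\{\phi^i(t)\}$ subconverges in $C^{4,\ga'}$ for $\ga' < \ga$, and after a diagonal argument over a countable dense set of times one obtains a one-parameter family $\phi_t \in \HH$, where the lower bound on $\gw_{\phi_t}$ comes from the Calabi-Yau estimates applied to the controlled volume ratio. The heart of the matter is then to show that this family is differentiable in $t$ with $\dt \phi_t = s_{\phi_t} - \bar{s}$, for which it suffices to control the quantity
\begin{align*}
P(t_0,h) := \lim_{i\to\infty}\brs{\frac{\phi^i(t_0+h) - \phi^i(t_0)}{h} - (s_{\phi^i(t_0)} - \bar{s})}
\end{align*}
and show $P(t_0,h) \to 0$ as $h \to 0$.

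Since each $\phi^i$ is a discrete Calabi flow of uniform step size, $\phi^i(t_0+h) - \phi^i(t_0)$ telescopes as a sum $\sum_j(\phi^i_{j+1} - \phi^i_j)$ over the roughly $h/\tau_i$ indices $j$ between $\floor{t_0/\tau_i}$ and $\floor{(t_0+h)/\tau_i}$. The key structural input is the Euler-Lagrange equation for the resolvent, namely the vanishing of the first variation in Lemma \ref{Ffirstvarlemma}, which yields $\tfrac{1}{\tau_i}\tfrac{\del \gg^i_j}{\del t}\big|_{t=1} = s_{\phi^i_{j+1}} - \bar{s}$ and so identifies the geodesic endpoint velocity with the scalar curvature of the next iterate. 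I would then split $P$ into a term $B$ measuring the gap between the secant $\phi^i_{j+1}-\phi^i_j$ and the geodesic velocity $\tfrac{\del\gg^i_j}{\del t}$, and a term $A$ comparing the resulting sum of scalar curvatures to $s_{\phi^i(t_0)} - \bar{s}$. Term $B$ is eliminated immediately by the coherence hypothesis (\ref{convcoh}): summing $\tau_i o(\tau_i)$ over $h/\tau_i$ indices gives a contribution of size $C\,o(\tau_i)$, which vanishes in the limit.

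The main obstacle is controlling term $A$, which after substituting the variational equation reduces to $\tfrac{\tau_i}{h}\sum_j \brs{s_{\phi^i_{j+1}} - s_{\phi^i_{\floor{t_0/\tau_i}}}}$, and hence to showing that the scalar curvatures of all iterates inside the window $[t_0,t_0+h]$ are uniformly close to one another. Here I would first use Lemma \ref{distcntrl} to see that each resolvent step satisfies $d(\phi^i_j,\phi^i_{j+1}) \leq C\tau_i$, so that concatenating at most $h/\tau_i$ such segments gives $d(\phi^i_j,\phi^i_{\floor{t_0/\tau_i}}) \leq Ch$ by the triangle inequality. The crucial remaining ingredient is a compactness statement asserting that, under the uniform $C^{4,\ga}$ and volume-ratio bounds of (\ref{convspat}), small geodesic distance forces $C^{4,\ga'}$-closeness: were this false, one could extract $C^{4,\ga'}$ limits of two sequences whose mutual distance tends to zero, invoke Lemma \ref{distanceestimate} to conclude the limits agree in $L^1(\gw)$ and hence coincide, and thereby reach a contradiction. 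With this in hand, $d(\phi^i_j,\phi^i_{\floor{t_0/\tau_i}}) \leq Ch$ yields a modulus-of-continuity bound $\brs{s_{\phi^i_{j+1}} - s_{\phi^i_{\floor{t_0/\tau_i}}}} \leq \ge(Ch)$ uniformly in $i$ and $j$, so that $A \leq \ge(Ch) \to 0$ as $h \to 0$. Combining the two estimates gives $P(t_0,h) \to 0$ as $h \to 0$, which establishes that $\phi_t$ is a classical solution of Calabi flow; smoothness then follows by bootstrapping from the flow equation together with the uniform $C^{4,\ga}$ bounds.
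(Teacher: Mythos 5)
Your proposal is correct and follows essentially the same route as the paper: the same telescoping decomposition of $P(t_0,h)$ into the coherence term $B$ (killed by (\ref{convcoh})) and the scalar-curvature comparison term $A$, the same use of the resolvent's Euler--Lagrange equation, of Lemma \ref{distcntrl} plus the triangle inequality for the $d \leq Ch$ bound, and the same compactness-by-contradiction lemma via Lemma \ref{distanceestimate}. Your addition of an explicit diagonal argument over a dense set of times is a reasonable tightening of the paper's compactness step, but does not change the argument.
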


\section{Conclusion}

As is clear from the proof of Theorem \ref{highreg}, more is proved in the sense
that minimizing sequences for each Moreau-Yosida functional satisfy all of the
estimates of Proposition \ref{mainenergyprop}.  Obtaining further regularity
results in this direction is an essential step in overcoming the gap between
Theorem
\ref{fundexistthm} and Conjecture \ref{existconj}.  Many ingenious arguments are
exploited in Chen-Tian's proof of uniqueness of cscK metrics, which ultimately
is a regularity proof, showing that the geodesic connecting two critical points
of $\nu$ is itself smooth.  The metrics in play in this proof already satisfy an
a priori $C^{1,1}$ bound though, making the problem more tractable.  Ultimately,
obtaining stronger a priori estimates on the intersection of geodesic balls and
sublevel sets of $\nu$ will be essential in obtaining higher regularity of
minimizing movements.

\bibliographystyle{hamsplain}

\end{document}